\documentclass[12pt]{article}

\usepackage{array}
\usepackage{amssymb}
\usepackage{amsmath,amsfonts}
\usepackage{amsthm,bbm,eufrak,upgreek}
\usepackage{mathrsfs,booktabs}
\usepackage{amsxtra,mathtools}
\usepackage[colorlinks=True,linkcolor=blue,anchorcolor=blue,citecolor=blue,filecolor=blue,CJKbookmarks=True]{hyperref}
\usepackage[a4paper,left=2.6cm,right=2.6cm,top=3.2cm,bottom=3.2cm]{geometry}

\usepackage[hang,flushmargin]{footmisc}

\setlength{\topmargin}{0mm}
\setlength{\oddsidemargin}{0mm}
\setlength{\evensidemargin}{0mm}
\setlength{\textheight}{225mm}
\setlength{\textwidth}{160mm}

\numberwithin{equation}{section}

\newcommand{\1}{\mathbbm{1}}
\newcommand{\Z}{{\mathbb Z}}
\newcommand{\C}{{\mathbb C}}

\newcommand{\si}{\sigma}

\newcommand{\be}{\beta}

\newcommand{\la}{\langle}
\newcommand{\ra}{\rangle}

\DeclareMathOperator{\Aut}{Aut}

\DeclareMathOperator{\End}{End}

\DeclareMathOperator{\wt}{wt}
\DeclareMathOperator{\Hom}{Hom}
\DeclareMathOperator{\Irr}{Irr}

\newtheorem{thm}{Theorem}[section]
\newtheorem{prop}[thm]{Proposition}
\newtheorem{lem}[thm]{Lemma}
\newtheorem{cor}[thm]{Corollary}
\newtheorem{rmk}[thm]{Remark}
\newtheorem{defn}[thm]{Definition}

\begin{document}

\begin{center}
{\Large \bf  Representations and fusion rules for the orbifold vertex operator algebras $L_{\widehat{\frak{sl}_2}}(k,0)^{\mathbb{Z}_3}$}
\end{center}

\begin{center}
{
Bing Wang
\let\thefootnote\relax\footnotetext{Supported by China NSF grant No.11771281 and SNSFC grant No.16ZR1417800. \\     Email: ering123@sjtu.edu.cn.}\\
School of Mathematical  Sciences, Shanghai Jiao Tong University\\
Shanghai 200240, China}
\end{center}

\begin{abstract}
    For the cyclic group $\mathbb{Z}_3$ and positive integer $k$, we study the representations of the orbifold vertex operator algebra $L_{\widehat{\mathfrak{sl}_2}}(k,0)^{\mathbb{Z}_3}$. All the irreducible modules for $L_{\widehat{\mathfrak{sl}_2}}(k,0)^{\mathbb{Z}_3}$ are classified and constructed explicitly. Quantum dimensions and fusion rules for the orbifold vertex operator algebra $L_{\widehat{\mathfrak{sl}_2}}(k,0)^{\mathbb{Z}_3}$ are completely determined.
\end{abstract}


\section{Introduction}

    The orbifold construction is a powerful tool for constructing new vertex algebras from given ones. Let $V$ be a vertex operator algebra and $G$ a finite group consisting of certain automorphisms of $V$, the fixed point subalgebra $V^G=\{v \in V \mid gv=v, g \in G\}$ is called an orbifold vertex operator subalgebra of $V$. Many interesting examples, especially orbifold vertex operator algebras related to affine vertex operator algebras and lattice vertex operator algebras, have been extensively studied both in the physics and mathematics literature (\cite{CM}, \cite{DJ13}, \cite{DM97}, \cite{DN99}, \cite{DRX17},\cite{DY02},\cite{JWQ19},\cite{JW2},\cite{M15},\cite{MT04}, etc.). 

    The orbifold theory is concerned with the properties and representation theory of the fixed point vertex operator subalgebra $V^G$. It is natural to ask whether $V^G$ inherits some properties from $V$, such as simplicity, rationality, $C_2$-cofiniteness and regularity. It has been established that if $V$ is a regular and selfdual vertex operator algebra of CFT type and $G$ is a finite solvable group, then $V^G$ is again a regular and selfdual vertex operator algebra of CFT type \cite{CM}, \cite{M15}. 
    The decomposition of $V$ into a direct sum of irreducible $V^G$-modules was initiated in \cite{DLM96-2} and \cite{DM97}. The decomposition of an arbitrary irreducible g-twisted $V$-module into a direct sum of $V^G$-modules was achieved in \cite{DY02} and \cite{MT04}. It follows from \cite{DRX17} that if $V^G$ is a regular and selfdual vertex operator algebra of CFT type, then any irreducible $V^G$-module occurs in an irreducible $g$-twisted $V$-module for some $g \in G$. In other words, the irreducible $V^G$-modules were completely classified if $V^G$ is a regular and selfdual vertex operator algebra of CFT type.

    This paper is prompted by the results of \cite{DJ13}. The orbifold vertex operator algebra $V_{L_2}^{A_4}$ was investigated in \cite{DJ13}, where $L_2$ is the root lattice of the simple Lie algebra $\mathfrak{sl}_2$ and $A_4$ is the alternating group which is a subgroup of the automorphism group of lattice vertex operator algebra $V_{L_2}$. The main idea is to realize $V_{L_2}^{A_4}$ as $(V_{L_8}^{+})^{\la \sigma \ra}$ where $L_8$ is a rank one lattice defined in \cite{DG98} and $\sigma$ is an automorphism of $\mathfrak{sl}_2$ of order $3$. 
    Note that $V_{L_2}$ is isomorphic to $L_{\widehat{\mathfrak{sl}_2}}(1,0)$ as vertex operator algebras. 
    It is well known that $L_{\widehat{\mathfrak{sl}_2}}(k,0)$ is a regular and selfdual vertex operator algebra of CFT type for $k\in\Z_{\geqslant 1}$ \cite{FZ92}, \cite{LeL04}. 
    It is natural to consider the orbifold vertex operator algebra $L_{\widehat{\mathfrak{sl}_2}}(k,0)^G$ for $k\in\Z_{\geqslant 1}$ and some finite subgroup $G$ of Aut$(L_{\widehat{\mathfrak{sl}_2}}(k,0))$. 
    Representations and fusion rules of the $\mathbb{Z}_2$-orbifold of the vertex operator algebra $L_{\widehat{\frak{sl}_2}}(k,0) (k\in\mathbb{Z}_{\geqslant 1})$ were given in \cite{JW2}. 
    For the Klein group $K$, $k\in\mathbb{Z}_{\geqslant 1}$, representations of the orbifold vertex operator algebras $L_{\widehat{\frak{sl}_2}}(k,0)^{K}$ were constructed in \cite{JBW}.
    Let $\Z_3$ be the cyclic subgroup of $\Aut(L_{\widehat{\mathfrak{sl}_2}}(k,0))$ generated by $\sigma$ which is defined by $\sigma(h) = h, \sigma(e) = \frac{-1+\sqrt{-3}}{2} e, \sigma(f) = \frac{-1-\sqrt{-3}}{2} f$, where $\{ h, e, f \}$ is a standard Chevalley basis of $\mathfrak{sl}_2$ with Lie brackets $[h,e] = 2e, [h,f] = -2f, [e,f] = h$. Then any irreducible $L_{\widehat{\mathfrak{sl}_2}}(k,0)^{\Z_3}$-module occurs in an irreducible $\tau$-twisted $L_{\widehat{\mathfrak{sl}_2}}(k,0)$-module for some $\tau \in \Z_3$ \cite{DRX17}.
    In this paper, we classify and construct all the irreducible modules for the orbifold vertex operator algebras $L_{\widehat{\mathfrak{sl}_2}}(k,0)^{\Z_3}$ for $k\geqslant 1$. We construct $\tau$-twisted modules of $L_{\widehat{\mathfrak{sl}_2}}(k,0)$ for each $\tau \in \Z_3$, and give the decomposition of each irreducible $\tau$-twisted $L_{\widehat{\mathfrak{sl}_2}}(k,0)$-module into a direct sum of irreducible $L_{\widehat{\mathfrak{sl}_2}}(k,0)^{\Z_3}$-modules. It turns out that there are exactly $9(k+1)$ inequivalent irreducible $L_{\widehat{\mathfrak{sl}_2}}(k,0)^{\Z_3}$-modules. We call the irreducible $L_{\widehat{\mathfrak{sl}_2}}(k,0)^{\mathbb{Z}_3}$-module coming from the irreducible $L_{\widehat{\mathfrak{sl}_2}}(k,0)$-module the \emph{untwisted type} $L_{\widehat{\mathfrak{sl}_2}}(k,0)^{\mathbb{Z}_3}$-module. And, we call the irreducible $L_{\widehat{\mathfrak{sl}_2}}(k,0)^{\mathbb{Z}_3}$-module coming from the twisted $L_{\widehat{\mathfrak{sl}_2}}(k,0)$-module the \emph{twisted type} $L_{\widehat{\mathfrak{sl}_2}}(k,0)^{\mathbb{Z}_3}$-module.

    The quantum dimensions of the irreducible modules introduced in \cite{DJX13} are the important invariants of $V$ and the product formula $qdim_V (M \boxtimes_V N) = qdim_V M \cdot qdim_V N$ (\cite{DJX13}) for any $V$-modules $M$, $N$ plays an essential role in computing the fusion rules. An explicit relation between the quantum dimension of an irreducible $g$-twisted $V$-module M and the quantum dimension of an irreducible $V^G$-submodule of $M$ was given in \cite{DRX17}. We use this powerful relation to compute the quantum dimension of any irreducible module of the orbifold vertex operator algebras $L_{\widehat{\mathfrak{sl}_2}}(k,0)^{\Z_3}$. 
    
    The fusion rules for the orbifold vertex operator algebra $L_{\widehat{\mathfrak{sl}_2}}(k,0)^{\mathbb{Z}_3}$ are completely determined in Section 4.  
    The initial inspiration for the main idea is the fusion rules of the $\mathbb{Z}_2$-orbifold of the vertex operator algebra $L_{\widehat{\frak{sl}_2}}(k,0)$ \cite{JW2}, which is useful to determine the fusion products between untwisted type $L_{\widehat{\mathfrak{sl}_2}}(k,0)^{\mathbb{Z}_3}$-modules and untwisted type $L_{\widehat{\mathfrak{sl}_2}}(k,0)^{\mathbb{Z}_3}$-modules as well as the fusion products between untwisted type $L_{\widehat{\mathfrak{sl}_2}}(k,0)^{\mathbb{Z}_3}$-modules and twisted type $L_{\widehat{\mathfrak{sl}_2}}(k,0)^{\mathbb{Z}_3}$-modules.
    However, the determination of the fusion products between twisted type $L_{\widehat{\mathfrak{sl}_2}}(k,0)^{\mathbb{Z}_3}$-modules and twisted type $L_{\widehat{\mathfrak{sl}_2}}(k,0)^{\mathbb{Z}_3}$-modules is much more complicated. The main strategy is to employ the Proposition 2.8 in \cite{DLM96-1} which described that if $W = M_1 \boxtimes_V M_2$ for any $g_i$-twisted $V$-module $M_i(i=1,2)$ together with some other conditions then $\widetilde{W} = M_1 \boxtimes_V \widetilde{M_2}$ (the notation of $\widetilde{W}$ is defined in \cite{DLM96-1} Lemma 2.6).
    Furthermore, we determine the contragredient modules of all the irreducible $L_{\widehat{\mathfrak{sl}_2}}(k,0)^{\mathbb{Z}_3}$-modules, thus the fusion rules for $L_{\widehat{\mathfrak{sl}_2}}(k,0)^{\mathbb{Z}_3}$ are completely determined.

    The paper is organized as follows. In Section 2, we briefly review some basic notations and facts on vertex operator algebras. In Section 3, we first give the action of the cyclic group $\Z_3$ on $L_{\widehat{\mathfrak{sl}_{2}}}(k,0)$ and realize each element of $\Z_3$ as an inner automorphism of $\mathfrak{sl}_2$. Then we classify and construct all the irreducible modules of the orbifold vertex operator algebras $L_{\widehat{\mathfrak{sl}_2}}(k,0)^{\Z_3}$ for $k\geqslant 1$. In Section 4, we compute the quantum dimension of any irreducible module of $L_{\widehat{\mathfrak{sl}_2}}(k,0)^{\Z_3}$ for $k\geqslant 1$. Finally, the fusion rules for the orbifold vertex operator algebras $L_{\widehat{\mathfrak{sl}_2}}(k,0)^{\mathbb{Z}_3}$ are completely determined.

    We use the usual symbols $\C$ for the complex numbers, $\Z$ for the integers, $\Z_{\geqslant 0}$ for the nonnegative integers, and $\Z_{\geqslant 1}$ for the positive integers. In this paper, $\overline{j}$ means the residue of the integer $j$ modulo $3$.

\section{Preliminary}

    Let $(V,Y,\mathbbm{1},\omega)$ be a vertex operator algebra \cite{B86}, \cite{FLM88}. We first review basics from \cite{DLM98-1}, \cite{DLM00}, \cite{FHL93} and \cite{LeL04}. Let $g$ be an automorphism of the vertex operator algebra $V$ of finite order $T$. Denote the decomposition of $V$ into eigenspaces of $g$ as:
        \[ V=\bigoplus_{r\in \mathbb{Z}/T\mathbb{Z}}V^r,\]
    where $V^r= \{v \in V |gv = e^{-2\pi \sqrt{-1}\frac{r}{T}}v\}$, $0 \leqslant r \leqslant T-1$. We use $r$ to denote both an integer between $0$ and $T-1$ and its residue class modulo $T$ in this situation.

\begin{defn}
    Let $V$ be a vertex operator algebra. A weak $g$-twisted $V$-module is a vector space $M$ equipped with a linear map
    \begin{align*}
        Y_M(\cdot, x) : V & \longrightarrow (\End M)[[x^{\frac{1}{T}},x^{-\frac{1}{T}}]] \\
                v & \longmapsto Y_M(v,x) = \sum_{n\in \frac{1}{T}\mathbb{Z}}v_nx^{-n-1},
    \end{align*}
    where $v_n \in \End M $, satisfying the following conditions for $0 \leqslant r \leqslant T-1$, $u \in V^r, v \in V$, $w \in M$:
    \[Y_M(u,x) = \sum_{n\in \frac{r}{T}+\mathbb{Z}}u_nx^{-n-1},\]
    \[u_sw=0 \quad\text{for} \quad s \gg 0,\]
    \[Y_M(\mathbbm{1},x) = id_M,\]
    \[x_{0}^{-1}\delta(\frac{x_1-x_2}{x_0})Y_M(u,x_1)Y_M(v,x_2)-x_{0}^{-1}\delta(\frac{x_2-x_1}{-x_0})Y_M(v,x_2)Y_M(u,x_1)\]
    \[=x_{2}^{-1}(\frac{x_1-x_0}{x_2})^{-\frac{r}{T}}\delta(\frac{x_1-x_0}{x_2})Y_M(Y(u,x_0)v,x_2),\]
    where $\delta(x)=\sum_{n\in\mathbb{Z}}x^n$ and all binomial expressions are to be expanded in nonnegative integral powers of the second variable.
\end{defn}

The following Borcherds identities can be derived from the twisted-Jacobi identity \cite{DLM98-1}, \cite{XX98}.

\begin{equation}
    [ u_{m + \frac{r}{T}}, v_{n + \frac{s}{T}} ] = \sum_{i=0}^{\infty} \binom{m + \frac{r}{T}}{i}(u_iv)_{m + n + \frac{r+s}{T}-i},     \label{Borcherds identity 1}
\end{equation}

\begin{equation}
    \sum_{i=0}^{\infty} \binom{\frac{r}{T}}{i}(u_{m+i}v)_{n+\frac{r+s}{T}-i} = \sum_{i=0}^{\infty} (-1)^i \binom{m}{i} (u_{m+\frac{r}{T}-i}v_{n+\frac{s}{T}+i} -(-1)^mv_{m+n+\frac{s}{T}-i}u_{\frac{r}{T}+i}),        \label{Borcherds identity 2}
\end{equation}
    where $u \in V^r$, $v \in V^s$, $m$, $n \in \mathbb{Z}$.

\begin{defn}
    An admissible $g$-twisted $V$-module is a weak $g$-twisted $V$-module which carries a $\frac{1}{T}\mathbb{Z}_{\geqslant 0}$-grading $M = \oplus_{n\in \frac{1}{T}\mathbb{Z}_{\geqslant 0}}M(n)$ satisfying $v_mM(n) \subseteq M(n+r-m-1)$ for homogeneous $v \in V_r$, $m$, $n \in\frac{1}{T}\mathbb{Z}$.
\end{defn}

\begin{defn}
    A $g$-twisted $V$-module is a weak $g$-twisted $V$-module which carries a $\mathbb{C}$-grading:
    \[M=\oplus_{ \lambda \in \mathbb{C}}M_\lambda,\]
    such that dim $M_{\lambda} < \infty$, $M_{\lambda +\frac{n}{T}} = 0$ for fixed $\lambda$ and $n \ll 0$, $L(0)w = \lambda w = ( \wt w )w$ for $w \in M_{\lambda}$, where $L(0)$ is the component operator of $Y_M(\omega, x) = \sum_{n\in \mathbb{Z}}L(n)x^{-n-2}$.
\end{defn}

\begin{rmk}
     If $g = id_V$, we have the notations of weak, admissible and ordinary $V$-modules \cite{DLM97-1}.
\end{rmk}

\begin{defn}
    A vertex operator algebra $V$ is called $g$-rational if the admissible $g$-twisted $V$-module category is semisimple. $V$ is called rational if $V$ is $id_V$-rational.
\end{defn}

If $M = \oplus_{n\in \frac{1}{T}\mathbb{Z}_{\geqslant 0}}M(n)$ is an admissible $g$-twisted $V$-module, the contragredient module $M'$ is defined as follows:
     \[ M' = \oplus_{n\in \frac{1}{T}\mathbb{Z}_{\geqslant 0}}M(n)^{*},\]
where $M(n)^{*} = \Hom_{\C}(M(n), \mathbb{C})$. The vertex operator $Y_{M'}(a, z)$ is defined for $a \in V$ via
     \[ \langle Y_{M'}(a, z)f, u \rangle = \langle f, Y_M(e^{zL(1)}(-z^{-2})^{L(0)}a, z^{-1})u \rangle, \]
where $\langle f, u \rangle = f(u)$ is the natural pairing $M' \times M \to \mathbb{C}$. It follows from \cite{FHL93} and \cite{XF00} that $( M' , Y_{M'} )$ is an admissible $g^{-1}$-twisted $V$-module. We can also define the contragredient module $M'$ for a $g$-twisted $V$-module $M$. In this case, $M'$ is a $g^{-1}$-twisted $V$-module. Moreover, $M$ is irreducible if and only if $M'$ is irreducible. $M$ is said to be selfdual if $M$ is $V$-isomorphic to $M'$. In particular, $V$ is said to be a selfdual vertex operator algebra if $V$ is isomorphic to $V'$.
We recall the following  concept from \cite{Zhu96}.

\begin{defn}
    A vertex operator algebra is called $C_2$-cofinite if $C_2(V)$ has finite codimension (i.e., dim $V/C_2(V) < \infty$),
    where $C_2(V) = \langle u_{-2}v \mid u, v \in V \rangle$.
\end{defn}

We have the following result from \cite{ABD04}, \cite{DLM98-1} and \cite{Zhu96}.

\begin{thm}
    If $V$ is a vertex operator algebra satisfying the $C_2$-cofinite property, then $V$ has only finitely many irreducible admissible modules up to isomorphism. The rationality of $V$ also implies the same result.
\end{thm}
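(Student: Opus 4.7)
The plan is to route both claims through Zhu's algebra $A(V)$ and the correspondence, due to Zhu, between isomorphism classes of irreducible admissible $V$-modules and isomorphism classes of simple $A(V)$-modules. Since a finite-dimensional associative algebra admits only finitely many simple modules up to isomorphism, the theorem reduces to showing that $A(V)$ is finite dimensional under either hypothesis, after which the finiteness of simple $A(V)$-modules transfers back via Zhu's bijection.

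For the $C_2$-cofinite case, I would carry out the spanning estimate of \cite{Zhu96}, reused in \cite{ABD04}. First I would choose a finite set $\{u^1,\dots,u^d\}$ of homogeneous vectors in $V$ whose images form a basis of $V/C_2(V)$. Using the Virasoro operator $L(-1)$, the Borcherds commutator formula, and induction on the conformal weight filtration, one then shows that every element of $V$ is a linear combination of monomials $u^{i_1}_{-n_1}\cdots u^{i_r}_{-n_r}\mathbbm{1}$ with $n_1\geqslant n_2\geqslant\cdots\geqslant n_r\geqslant 1$. Projecting to $A(V)=V/O(V)$ and exploiting the relations imposed by $O(V)$, in particular the identification $L(-1)u+L(0)u\in O(V)$ together with the reordering relations coming from $u\circ v\in O(V)$, collapses the allowed monomials to a finite-dimensional quotient and establishes $\dim A(V)<\infty$.

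For the rationality case, I would follow the argument of \cite{DLM98-1}. Rationality gives complete reducibility of every admissible $V$-module; in particular, the admissible $V$-module $M(A(V))$ produced by Zhu's induction functor applied to the left regular $A(V)$-module must decompose as a direct sum of irreducibles, each of whose top level contributes a simple $A(V)$-module. Combined with Zhu's bijection on isomorphism classes, this forces $A(V)$ to have only finitely many simple modules, and a Wedderburn-type argument then yields finite-dimensionality (in fact semisimplicity) of $A(V)$.

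The main technical obstacle is the spanning step in the $C_2$-cofinite case: closing the induction that brings an arbitrary monomial into the normally ordered form $n_1\geqslant\cdots\geqslant n_r\geqslant 1$ requires careful bookkeeping with the Borcherds commutator formula, and the subsequent reduction modulo $O(V)$ demands enough relations to bound both the length $r$ and the exponents $n_i$. Once $A(V)$ is known to be finite dimensional, the passage from the $A(V)$-module side back to the admissible $V$-module side is automatic from Zhu's correspondence, and the statement follows uniformly for both hypotheses.
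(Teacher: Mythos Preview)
Your proposal outlines the standard argument via Zhu's algebra and is essentially correct as a proof sketch of this well-known result. However, there is nothing to compare it against: the paper does not prove this theorem at all. It is stated in the Preliminaries section with the prefatory sentence ``We have the following result from \cite{ABD04}, \cite{DLM98-1} and \cite{Zhu96}'' and is simply quoted from the literature without argument.

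That said, your outline does track the content of those cited references. The $C_2$-cofinite case via the spanning set argument and the finite-dimensionality of $A(V)$ is the approach of \cite{Zhu96} and \cite{ABD04}; the rationality case via semisimplicity of $A(V)$ is the approach of \cite{DLM98-1}. One minor point: in the rationality case you do not actually need to establish finite-dimensionality of $A(V)$ to conclude finiteness of irreducibles. The argument in \cite{DLM98-1} shows directly that $A(V)$ is a finite-dimensional semisimple algebra by decomposing the induced module, so the Wedderburn step is immediate rather than a separate deduction. Your sketch is otherwise faithful to the literature the paper invokes.
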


We have the following results from \cite{DLM98-1} and \cite{DLM00}.

\begin{thm}
    If V is $g$-rational vertex operator algebra, then

        (1) Any irreducible admissible $g$-twisted $V$-module $M$ is a $g$-twisted $V$-module. Moreover, there exists a number $\lambda \in \mathbb{C}$ such that $M=\oplus_{n\in \frac{1}{T}\mathbb{Z}_{\geqslant 0}}M_{\lambda + n}$, where $M_{\lambda} \ne 0$. The number $\lambda$ is called the conformal weight of $M$;

        (2)There are only finitely many irreducible admissible $g$-twisted $V$-modules up to isomorphism.
\end{thm}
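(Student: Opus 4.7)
The plan is to reduce both statements to structural properties of the $g$-twisted Zhu algebra $A_g(V)$ introduced by Dong--Li--Mason. Recall that $A_g(V)$ is the quotient $V/O_g(V)$ where $O_g(V)$ is spanned by certain expressions of the form $u \circ_g v$ (with $u\in V^r$ contributing only when $r=0$) together with a correction term making the product associative; the key formal properties we need are (a) $A_g(V)$ is an associative algebra, (b) the assignment $M\mapsto M(0)$ from admissible $g$-twisted $V$-modules to $A_g(V)$-modules sends irreducibles to irreducibles and is a bijection on isomorphism classes, and (c) if $V$ is $g$-rational then $A_g(V)$ is finite-dimensional and semisimple. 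Granting (a)--(c), everything else follows by fairly standard arguments, so the real content is establishing (b) and (c).

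For part~(1), let $M=\bigoplus_{n\in \frac{1}{T}\Z_{\geqslant 0}}M(n)$ be an irreducible admissible $g$-twisted $V$-module. By (b), $M(0)$ is an irreducible $A_g(V)$-module, and by (c) it is therefore finite-dimensional. The operator $L(0)$ descends to a well-defined element of the center of $A_g(V)$ (since $[\omega]$ commutes with $[v]$ for all $v$ in $A_g(V)$, as $L(0)$ preserves the grading and its bracket with any mode lies in lower filtration), so by Schur's lemma $L(0)$ acts on $M(0)$ as a scalar $\lambda$. Next one has to show each $M(n)$ is finite-dimensional and that $L(0)$ acts as $\lambda+n$ there; the second assertion is immediate from the commutation $[L(0),v_m]=(\wt v-m-1)v_m$ applied to $v\in V^r$ with the twisted mode shift, and the first follows from the generalized-Verma construction: the admissible $g$-twisted module generated by $M(0)$ has each homogeneous piece spanned by finitely many twisted monomials applied to a basis of $M(0)$, and by irreducibility $M$ is a quotient of this universal object. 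Taking direct sums then yields the claimed $\mathbb{C}$-grading, proving that $M$ is in fact a $g$-twisted $V$-module.

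For part~(2), the bijection in (b) together with the finite-dimensionality and semisimplicity of $A_g(V)$ in (c) immediately gives that there are only finitely many irreducible admissible $g$-twisted $V$-modules, since a finite-dimensional semisimple associative algebra has only finitely many isomorphism classes of irreducible modules (the simple factors in its Wedderburn decomposition).

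The main obstacle, and the technical heart of the argument, is proving that $g$-rationality implies $A_g(V)$ is finite-dimensional semisimple (property~(c)). One direction---semisimplicity---follows from the bijection in (b): if some $A_g(V)$-module failed to be completely reducible, one could lift the corresponding extension to an admissible $g$-twisted $V$-module that is not completely reducible, contradicting $g$-rationality. Finite-dimensionality is subtler and relies on showing that $A_g(V)$ has only finitely many irreducible modules combined with the fact that each such irreducible module is finite-dimensional, which in turn uses the explicit Poincaré--Birkhoff--Witt-type bound on the growth of admissible twisted modules generated from a top level. The verification of these growth bounds in the twisted setting, and the careful definition of $O_g(V)$ that makes both the algebra structure and the module-functor work, constitute the real technical load behind the cited theorem.
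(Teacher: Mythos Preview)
The paper does not prove this theorem; it is quoted without proof from \cite{DLM98-1} and \cite{DLM00}, so there is no in-paper argument to compare against. Your outline via the $g$-twisted Zhu algebra $A_g(V)$ is precisely the strategy of \cite{DLM98-1}: the bijection $M\mapsto M(0)$ with simple $A_g(V)$-modules, centrality of $[\omega]$ giving the scalar $\lambda$, the generalized-Verma bound for $\dim M(n)<\infty$ once $\dim M(0)<\infty$, and lifting a non-split $A_g(V)$-extension via $\bar M(-)$ to contradict $g$-rationality for the semisimplicity of $A_g(V)$.

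One small correction: your justification for $[\omega]$ being central (``its bracket with any mode lies in lower filtration'') is not the operative mechanism. The cleaner statement is simply that $L(0)$ commutes with every zero mode on $M(0)$: for homogeneous $v$ one has $o(v)=v_{\wt v-1}$ and $[L(0),v_n]=(\wt v-n-1)v_n$, so $[L(0),o(v)]=0$, and Schur then gives the scalar $\lambda$. Otherwise your sketch tracks the original argument faithfully, including your correct identification of the finite-dimensionality of $A_g(V)$ as the technically loaded step that carries the weight of part~(2).
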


\begin{defn}
    A vertex operator algebra $V$ is called regular if every weak $V$-module is a direct sum of irreducible $V$-modules, i.e., the weak module category is semisimple.
\end{defn}

\begin{defn}
    A vertex operator algebra $V=\oplus_{n\in \mathbb{Z}}V_n$ is said to be of CFT type if $V_n = 0$ for $n < 0$ and $V_0 = \mathbb{C}\mathbbm{1}$.
\end{defn}

\begin{rmk}
    It is proved in \cite{ABD04}  that for a CFT type vertex operator algebra $V$,  regularity is equivalent to rationality and $C_2$-cofiniteness.
\end{rmk}

\begin{thm} (\cite{CM}, \cite{M15}) \label{VG prop}
If $V$ is a regular and selfdual vertex operator algebra of CFT type, and $G$ is solvable, then $V^G$ is a regular and selfdual vertex operator algebra of CFT type.
\end{thm}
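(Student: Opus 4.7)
My plan is to exploit the solvability of $G$ to reduce to the case where $G$ is cyclic of prime order, and then verify the four required properties (CFT type, selfduality, rationality, $C_2$-cofiniteness) in that base case, the last two being the substantive content. Since $G$ is solvable, I would fix a composition series $1=G_0 \triangleleft G_1 \triangleleft \cdots \triangleleft G_n=G$ whose successive quotients $G_{i+1}/G_i$ are cyclic of prime order. Because $V^{G_{i+1}}=(V^{G_i})^{G_{i+1}/G_i}$, induction on $n$ reduces everything to the case $G=\langle g \rangle$ with $|g|=p$ prime, provided each intermediate $V^{G_i}$ is itself regular, selfdual, and of CFT type so that the inductive hypothesis can be reapplied.

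In the cyclic prime-order case the elementary properties are immediate. Since $V^G$ is a graded subspace of $V$, we have $(V^G)_n=0$ for $n<0$, while $g\mathbbm{1}=\mathbbm{1}$ gives $(V^G)_0=\mathbb{C}\mathbbm{1}$; hence $V^G$ is of CFT type. For selfduality, the nondegenerate invariant symmetric bilinear form on $V$ arising from the selfduality of $V$ can be averaged over $G$ to be $G$-invariant, so it restricts to a nondegenerate invariant form on $V^G$, and this restriction yields an isomorphism $V^G \cong (V^G)'$ of $V^G$-modules.

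The substantive content is regularity, equivalently (by Remark~2.11) rationality together with $C_2$-cofiniteness. For $C_2$-cofiniteness I would follow the strategy of \cite{M15}: combine the decomposition of each irreducible $g^i$-twisted $V$-module into irreducible $V^G$-modules from \cite{DY02, MT04} with PBW-style spanning-set arguments to bound $\dim V^G/C_2(V^G)$ using $\dim V/C_2(V)$ across the finitely many twisted sectors. For rationality I would adopt the induction method of \cite{CM}: starting from an arbitrary admissible $V^G$-module $M$, construct an induced weak $V$-module carrying a compatible $G$-action, and decompose it using $g^i$-twisted rationality of $V$ to recover semisimplicity of $M$. I expect this last step to be the main obstacle: producing a genuine weak twisted $V$-module from an arbitrary $V^G$-module requires building intertwining operators from $V$ to $M$ without prior knowledge of the fusion structure of $V^G$, and verifying the twisted Jacobi identity on the induced space---this is the delicate technical heart of the Carnahan--Miyamoto argument. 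Once this induction functor is established and shown to give an equivalence onto the $G$-equivariant twisted module category, the rationality of $V$ in all twisted sectors propagates to $V^G$ and completes the proof.
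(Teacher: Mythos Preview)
The paper does not prove this theorem: it is stated with attribution to \cite{CM} and \cite{M15} and used as a black box, with no argument given. There is therefore nothing in the paper to compare your proposal against.

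That said, your sketch is a faithful outline of the strategy in the cited references. The reduction via a subnormal series to the cyclic prime-order case is exactly how the solvability hypothesis is used, and your handling of CFT type and selfduality is standard. Your identification of the $C_2$-cofiniteness step with \cite{M15} and the rationality step with the induction-functor machinery of \cite{CM} is accurate, and you are right that constructing the induced twisted $V$-module from an arbitrary admissible $V^G$-module and verifying the twisted Jacobi identity is the genuine technical core. One small correction: for the inductive step you want a chief series (normal in $G$) rather than a composition series, or alternatively to use a normal subgroup of prime index at each stage, so that $G_{i+1}/G_i$ actually acts on $V^{G_i}$; your formulation $V^{G_{i+1}}=(V^{G_i})^{G_{i+1}/G_i}$ requires $G_i \triangleleft G_{i+1}$ with the quotient acting, which is fine, but you should also check that the intermediate fixed-point algebras inherit the hypotheses so the induction can proceed. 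Beyond that, what you have written is a correct high-level plan, not a proof; filling in the two substantive steps is the content of the two cited papers.
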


We now review some notations and  facts about the action of the automorphism group on twisted modules of vertex operator algebra $V$ from \cite{DLM00}, \cite{DRX17}, \cite{DY02}, \cite{MT04}.

  Let $g, h$ be two automorphisms of $V$. If $(M, Y_M)$ is a weak $g$-twisted $V$-module, there is a weak $h^{-1}gh$-twisted $V$-module $(M \circ h, Y_{M \circ h})$ where $M \circ h = M$ as vector spaces and $Y_{M \circ h}(v, z) = Y_M(hv, z)$ for $v \in V$. This gives a right action of $\Aut(V)$ on weak twisted $V$-modules. Symbolically, we write
    \[  (M, Y_M) \circ h = (M \circ h,Y_{M \circ h}) = M \circ h. \]
  The $V$-module $M$ is called \emph{$h$-stable} if $M \circ h$ and $M$ are isomorphic $V$-modules.

  Let $G$ be a finite group of automorphisms of $V$, $g \in G$ of finite order $T$ and $M = (M, Y_M)$ an irreducible $g$-twisted $V$-module. Define a subgroup $G_M$ of $G$ consisting all of $h \in G$ such that $M$ is $h$-stable. For $h \in G_{M}$, there is a linear isomorphism $\phi(h) : M \to M$ satisfying
    \[ \phi(h)Y_M(v, z){\phi(h)}^{-1} = Y_{M \circ h}(v, z) = Y_M(hv, z)  \]
  for $v \in V$. The simplicity of $M$ together with Schur's lemma shows that $h \mapsto \phi(h)$ is a projective representation of $G_M$ on $M$. Let $\alpha_M$ be the corresponding $2$-cocycle in $C^2(G, \mathbb{C}^*)$. Then $M$ is a module for the twisted group algebra $\mathbb{C}^{\alpha_M}[G_M]$ which is  a semisimple associative algebra. A basic fact is that $g$ belongs to $G_M$. Let $M^r=\oplus_{n\in \frac{r}{T}+\mathbb{Z}_{\geqslant 0}}M(n)$ for $r = 0$, $1$, $\cdots$, $T-1$, then $M = \oplus_{n\in \frac{1}{T}\mathbb{Z}_{\geqslant 0}}M(n) = \oplus_{r=0}^{T-1}M^r$ and each $M^r$ is an irreducible $V^{\langle g \rangle}$-module on which  $\phi(g)$ acts as constant $e^{2\pi \sqrt{-1}\frac{r}{T}}$ \cite{DM97}, \cite{DRX17}.

Let $\Lambda_{G_M, \alpha_M}$ be the set of all irreducible characters $\lambda$ of $\mathbb{C}^{\alpha_M}[G_M]$. Then
    \begin{equation} \label{any irr twist mod decomp}
       M = \oplus_{\lambda \in \Lambda_{G_M, \alpha_M}}W_{\lambda} \otimes M_{\lambda},
    \end{equation}
    where $W_{\lambda}$ is the simple $\mathbb{C}^{\alpha_M}[G_M]$-module affording $\lambda$ and $M_{\lambda} = \Hom_{\mathbb{C}^{\alpha_M}[G_M]}(W_{\lambda}, M)$ is the mulitiplicity of $W_{\lambda}$ in $M$. And each $M_{\lambda}$ is a module for the vertex operator subalgebra $V^{G_M}$.

The following results follow from \cite{DRX17} and \cite{DY02}.

\begin{thm}     \label{DRX17 thm1}
    With the same notations as above we have

    (1) $W_{\lambda} \otimes M_{\lambda}$ is nonzero for any $\lambda \in \Lambda_{G_M, \alpha_M}$.

    (2) Each $M_{\lambda}$ is an irreducible $V^{G_M}$-module.

    (3) $M_{\lambda}$ and $M_{\mu}$ are equivalent $V^{G_M}$-module if and only if $\lambda = \mu$.
\end{thm}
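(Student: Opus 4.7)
The plan is to exploit a Schur--Weyl type duality between the twisted group algebra $\mathbb{C}^{\alpha_M}[G_M]$ and the orbifold subalgebra $V^{G_M}$, both acting on the irreducible $g$-twisted $V$-module $M$. The decomposition (\ref{any irr twist mod decomp}) is already available from the semisimplicity of the finite-dimensional associative algebra $\mathbb{C}^{\alpha_M}[G_M]$, so the task reduces to analyzing the multiplicity spaces $M_\lambda$ as $V^{G_M}$-modules.

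First I would check that the two actions commute on $M$. The defining relation $\phi(h)Y_M(v,x)\phi(h)^{-1}=Y_M(hv,x)$ together with the invariance $hv=v$ for $v\in V^{G_M}$ implies that each $\phi(h)$ commutes with every component operator $v_n$ with $v\in V^{G_M}$. Consequently each $M_\lambda=\Hom_{\mathbb{C}^{\alpha_M}[G_M]}(W_\lambda,M)$ inherits a $V^{G_M}$-module structure, and (\ref{any irr twist mod decomp}) becomes a decomposition of $(\mathbb{C}^{\alpha_M}[G_M],V^{G_M})$-bimodules.

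The central technical input, which I would import from \cite{DY02,MT04,DRX17} rather than reprove, is that $V^{G_M}$ and $\phi(\mathbb{C}^{\alpha_M}[G_M])$ form a dual pair on $M$: each is the full centralizer of the other in $\End(M)$. Granting this, the classical Double Commutant Theorem delivers (2) and (3) simultaneously. For (2), any nonscalar $V^{G_M}$-equivariant endomorphism of a single block $W_\lambda\otimes M_\lambda$ would have the form $1\otimes T$ for some $T\in\End_{V^{G_M}}(M_\lambda)$, which lies outside $\phi(\mathbb{C}^{\alpha_M}[G_M])\big|_{W_\lambda\otimes M_\lambda}=\End(W_\lambda)\otimes 1$; by Schur's lemma this forces $M_\lambda$ to be irreducible. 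For (3), an isomorphism $\psi\colon M_\lambda\to M_\mu$ with $\lambda\ne\mu$ would, together with any nonzero $f\colon W_\lambda\to W_\mu$, produce a block-mixing $V^{G_M}$-equivariant endomorphism $f\otimes\psi$ not belonging to $\phi(\mathbb{C}^{\alpha_M}[G_M])$, again contradicting the dual-pair property. For (1), I would argue by contradiction: if $W_\lambda\otimes M_\lambda=0$ then the primitive central idempotent $e_\lambda\in\mathbb{C}^{\alpha_M}[G_M]$ would act as zero on $M$, making $\phi$ non-faithful, so its image could not exhaust the centralizer of $V^{G_M}$.

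The main obstacle is the dual-pair property itself, as every other step is a formal consequence of it. Proving that every $V^{G_M}$-commuting endomorphism of $M$ comes from $\phi(\mathbb{C}^{\alpha_M}[G_M])$ is the deep vertex-algebraic input; its proof in \cite{DY02,MT04} combines a density argument using the twisted Zhu algebra with semisimplicity of the admissible module category. Here this step is invoked as a black box, and only its hypotheses---regularity and CFT type, secured by Theorem \ref{VG prop}---need to be checked in the target setting.
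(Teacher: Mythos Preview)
The paper gives no proof of this theorem; it is quoted from \cite{DRX17} and \cite{DY02} without argument (``The following results follow from \cite{DRX17} and \cite{DY02}''). Your outline matches the Schur--Weyl duality strategy of those references, and your treatment of (2) and (3) via the double-commutant theorem is the standard route once the dual-pair property is granted.

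Your argument for (1), however, has a gap. You deduce from $M_\lambda=0$ that $\phi(e_\lambda)=0$, hence $\phi$ is not faithful, and conclude that its image cannot exhaust the centralizer of $V^{G_M}$. But the dual-pair property, as you formulate it, is a statement about the \emph{image} $\phi(\mathbb{C}^{\alpha_M}[G_M])$, not about injectivity of $\phi$. If the block $W_\lambda\otimes M_\lambda$ is absent from $M$, then both the image of $\phi$ and the centralizer of the $V^{G_M}$-action shrink to the same algebra $\prod_{\mu:\,M_\mu\ne0}\End(W_\mu)\otimes1$, and the mutual-centralizer identity still holds with no contradiction. So nonvanishing in (1) is not a formal consequence of the double-commutant statement. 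In \cite{DY02,MT04,DRX17} it is obtained by a separate argument---for instance via the $\langle g\rangle$-grading $M=\bigoplus_{r=0}^{T-1}M^r$ with each $M^r\ne0$, combined with an inductive step, or by proving faithfulness of the projective $G_M$-action on $M$ directly from irreducibility. You would need to import or supply that ingredient independently rather than derive it from the dual-pair black box.
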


\begin{thm}     \label{DRX17 thm2}
    Let $g$, $h \in G$, $M$ be an irreducible $g$-twisted $V$-module, and $N$ an irreducible $h$-twisted $V$-module. If $M$, $N$ are not in the same orbit under the action of $G$, then the irreducible $V^G$-modules $M_{\lambda}$ and $N_{\mu}$ are inequivalent for any $\lambda \in \Lambda_{G_M, \alpha_M}$ and $\mu \in \Lambda_{G_N, \alpha_N}$.
\end{thm}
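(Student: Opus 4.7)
The plan is to argue by contradiction: suppose $M_{\lambda}$ and $N_{\mu}$ are equivalent as $V^G$-modules, and derive that $M$ and $N$ must then lie in the same $G$-orbit.

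First, from the decomposition (\ref{any irr twist mod decomp}) and Theorem \ref{DRX17 thm1}, the summand $M_{\lambda}$ occurs as a distinguished $V^{G_M}$-isotypic component inside $M$, and likewise $N_{\mu}$ occurs inside $N$. Restricting to $V^G \subseteq V^{G_M}$, an assumed isomorphism $M_{\lambda} \cong N_{\mu}$ of $V^G$-modules exhibits a common irreducible $V^G$-module occurring inside two a priori unrelated twisted modules.

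The core of the argument is the induction construction from $V^G$-modules to (possibly twisted) $V$-modules developed by Dong--Li--Mason (\cite{DLM96-1}) and Miyamoto--Tanabe (\cite{MT04}). For any irreducible $V^G$-module $W$, one can build a generally non-simple $V$-module $\Ind W$ with the following properties: (i) it contains $W$ as a $V^G$-submodule; (ii) it carries a natural $G$-action permuting its twisted sectors; and (iii) it decomposes as a finite direct sum of irreducible twisted $V$-modules that all lie in a single $G$-orbit. Applying this to $W := M_{\lambda} \cong N_{\mu}$, both $M$ and $N$ must appear as irreducible constituents of $\Ind W$, and hence they belong to the same $G$-orbit, contradicting the hypothesis.

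The main obstacle is verifying property (iii), namely that the simple constituents of $\Ind W$ form a single $G$-orbit rather than several. The key point is that $V = \bigoplus_{r} V^r$ decomposes into $G$-isotypic pieces relative to $V^G$, so the action of the non-invariant part on a given simple summand of $\Ind W$ generates components lying in $G$-translated twisted sectors; a Frobenius-reciprocity calculation together with the transitivity of the $G$-action then forces all simple constituents to be $G$-conjugate. A careful check using the twisted Jacobi identity (cf.\ the Borcherds identities (\ref{Borcherds identity 1}) and (\ref{Borcherds identity 2})) confirms that this induced structure is compatible with the original $V^G$-action on $W$, completing the argument.
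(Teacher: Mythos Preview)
The paper does not prove this theorem at all: it is quoted in the preliminaries as a known result, attributed to \cite{DRX17} and \cite{DY02}. So there is no ``paper's own proof'' to compare against; you are attempting to supply an argument for a cited fact.

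That said, your proposal has a genuine gap. The induction functor you invoke---building $\Ind W$ from an irreducible $V^G$-module $W$ and claiming it decomposes as a finite direct sum of irreducible twisted $V$-modules lying in a \emph{single} $G$-orbit---is not available at the level of generality in which the theorem is stated. Nothing in the hypotheses guarantees that $V^G$ (or $V$) is rational or $C_2$-cofinite, so you cannot assert that $\Ind W$ is a finite direct sum of irreducibles, nor even that such an induced object is well-behaved. The references you cite for this construction, \cite{DLM96-1} and \cite{MT04}, do not contain it in the form you describe; \cite{DLM96-1} treats simple-current extensions, and \cite{MT04} constructs twisted Zhu algebras $A_{g,n}(V)$, neither of which gives you an induction functor with your property (iii). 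Your verification of (iii) (``a Frobenius-reciprocity calculation together with the transitivity of the $G$-action'') is where the real content lies, and as written it is circular: the transitivity you need is precisely the single-orbit conclusion you are trying to establish.

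The arguments actually used in \cite{DY02} and \cite{DRX17} go in the opposite direction. Rather than inducing up from $V^G$, one works inside the given irreducible twisted modules $M$ and $N$: since $M$ is irreducible over $V$, the $V^G$-submodule $M_\lambda$ together with the action of the non-invariant pieces of $V=\bigoplus_\chi V_\chi$ generates all of $M$; an isomorphism $M_\lambda \cong N_\mu$ of $V^G$-modules is then promoted, via the intertwining operators coming from the decomposition of $V$, to a $V$-isomorphism between $M$ and some $N\circ h$. This avoids any appeal to an abstract induction functor and works without rationality hypotheses.
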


\begin{thm}   \label{DRX17 thm3}
    Let $V^G$ be a regular and selfdual vertex operator algebra of CFT type. Then any irreducible $V^G$-module is isomorphic to $M_\lambda$ for some irreducible $g$-twisted $V$-module $M$ and some $\lambda \in \Lambda_{G_M, \alpha_M}$. In particular, if $V$ is a regular and selfdual vertex operator algebra of CFT type and $G$ is solvable, then any irreducible $V^G$-module is isomorphic to some $M_\lambda$.
\end{thm}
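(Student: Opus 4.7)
For each irreducible $V^G$-module $U$, my plan is to construct an irreducible twisted $V$-module containing $U$ as a $V^G$-submodule, so that the decomposition (\ref{any irr twist mod decomp}) realizes $U$ as some $M_\lambda$. The natural framework is the modular tensor category $\mathcal{C} := V^G\text{-mod}$, which (since $V^G$ is regular, selfdual, and of CFT type, hence rational and $C_2$-cofinite) carries the fusion product $\boxtimes_{V^G}$, and in which $V$ sits as a commutative algebra object in a $G$-crossed extension, whose local modules in the $g$-graded component are precisely the $g$-twisted $V$-modules.

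The main construction proceeds as follows. Given an irreducible $U \in \mathcal{C}$, form $\mathcal{M} := V \boxtimes_{V^G} U$ and transport the algebra structure on $V$ to obtain a $V$-action on $\mathcal{M}$. Decomposing $V = \oplus_{\chi \in \widehat{G}} V_\chi$ into $\widehat{G}$-isotypic pieces as a $V^G$-module, Schur's lemma applied to the fusion action of each $V_\chi$ on the irreducible $U$ shows the monodromy is scalar; these scalars assemble into a character of $\widehat{G}$, which by Pontryagin duality (abelian case) or by the representation theory of the twisted group algebras $\mathbb{C}^{\alpha_M}[G_M]$ (general case) picks out an element $g \in G$ for which $\mathcal{M}$ satisfies the $g$-twisted Jacobi identity. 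Selecting an irreducible $g$-twisted summand $M$ of $\mathcal{M}$ containing $U$ and invoking Theorem \ref{DRX17 thm1} then yields $U \cong M_\lambda$ for some $\lambda \in \Lambda_{G_M, \alpha_M}$, and Theorem \ref{DRX17 thm2} rules out double-counting across distinct $G$-orbits of twisted modules. The solvable-group assertion follows immediately from Theorem \ref{VG prop}, which ensures $V^G$ inherits the CFT/regularity/selfduality hypotheses from $V$.

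The hardest step is promoting the $V^G$-module structure on $\mathcal{M} = V \boxtimes_{V^G} U$ to a genuine $g$-twisted $V$-module satisfying the twisted Jacobi identity with the prescribed fractional monodromy exponents $r/T$. This reduces to computing the braiding between each $V_\chi$ and $U$ in $\mathcal{C}$ and matching it with the $g$-eigenvalue decomposition of $V$: for abelian $G$ the simple-current nature of each $V_\chi$ makes this tractable, while for non-abelian $G$ the argument requires the categorical orbifold theory of Kirillov--Ostrik and Huang--Kirillov--Lepowsky, where the twisted module structure emerges from de-equivariantizing $V$ viewed as a commutative algebra in the $G$-equivariantized tensor category. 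An alternative, more computational route would replace this construction with a counting argument based on modular invariance of twisted trace functions (Zhu's theorem for orbifolds), showing that the family $\{M_\lambda\}$ already has cardinality $|\Irr(\mathcal{C})|$ and therefore is exhaustive.
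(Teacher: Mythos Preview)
The paper does not actually prove this theorem: it is quoted in the preliminaries as a result from \cite{DRX17} (together with Theorems~\ref{DRX17 thm1} and~\ref{DRX17 thm2}), so there is no ``paper's own proof'' to compare against. What you have written is an attempt to reprove a cited result.

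That said, your sketch follows a different line from the argument in \cite{DRX17}. Dong--Ren--Xu do not use categorical induction $V\boxtimes_{V^G}U$; their proof is analytic, based on modular invariance of the twisted trace functions $Z_M(v,\tau)$ and an $S$-matrix computation that shows the quantum dimensions of the $M_\lambda$ already exhaust the global dimension of $V^G$, forcing the list $\{M_\lambda\}$ to be complete. This is essentially the ``alternative, more computational route'' you mention at the end, and it is the route actually taken in the reference.

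Your primary route has a genuine gap. The decomposition $V=\oplus_{\chi\in\widehat{G}}V_\chi$ and the Pontryagin-duality step are only available for abelian $G$; for general $G$ one has $V=\oplus_{\chi\in\Irr(G)}W_\chi\otimes V_\chi$ with $\dim W_\chi>1$, the $V_\chi$ need not be simple currents, and there is no character of $\widehat{G}$ to read off. More seriously, even for abelian $G$ you assert but do not prove that the braiding of each $V_\chi$ with $U$ is a scalar of the form $e^{-2\pi i r/T}$ for a \emph{single} $g=\sigma^r$ independent of $\chi$, and that these scalars are multiplicatively compatible so that the twisted Jacobi identity holds on $V\boxtimes_{V^G}U$. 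This compatibility is exactly the content of the theorem and cannot be obtained from Schur's lemma alone; in the Kirillov--Ostrik framework it requires identifying $V^G$-mod as a $G$-equivariantization and invoking the classification of local modules over the algebra object $V$, which is substantial machinery that you invoke by name but do not carry out. As written, the proposal is a plausible outline rather than a proof.
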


We now recall from \cite{FHL93} the notions of intertwining operators and fusion rules.

\begin{defn} 
     Let $(V, Y)$ be a vertex operator algebra and let $(W^{1}, Y^{1})$, $(W^{2}, Y^{2})$ and $(W^{3}, Y^{3})$ be $V$-modules. An \emph{intertwining operator} of type 
     $\left(\begin{array}{c}
     W^{3}\\
     W^{1} \ W^{2}
     \end{array}\right)$ is a linear map
     \begin{align*}
     I(\cdot, z) : W^{1} & \longrightarrow \Hom ( W^{2}, W^{3} ) \{ z \} \\
                 u  & \longmapsto I(u, z) = \sum_{n\in\mathbb{Q}}u_{n}z^{-n-1}
     \end{align*}
     satisfying:
        
        (1) for any $u\in W^{1}$ and $v\in W^{2}$, $u_{n}v=0$ for $n$
        sufficiently large;
        
        (2) $I(L(-1)v, z)=\frac{d}{dz}I(v, z)$;
        
        (3) (Jacobi identity) for any $u\in V$, $v\in W^{1}$,

        \[ z_{0}^{-1}\delta\left(\frac{z_{1}-z_{2}}{z_{0}}\right)Y^{3}(u, z_{1})I(v, z_{2})-z_{0}^{-1}\delta\left(\frac{-z_{2}+z_{1}}{z_{0}}\right)I(v, z_{2})Y^{2}(u, z_{1})  \]
        \[ =z_{2}^{-1}\delta\left(\frac{z_{1}-z_{0}}{z_{2}}\right)I(Y^{1}(u, z_{0})v, z_{2}).  \]
    The space of all intertwining operators of type $\left(\begin{array}{c}
         W^{3}\\
         W^{1}\ W^{2}
         \end{array}\right)$ is denoted by
         $I_{V}\left(\begin{array}{c}
         W^{3}\\
         W^{1}\ W^{2}
         \end{array}\right).$ Let $N_{W^{1}, W^{2}}^{W^{3}}=\dim I_{V}\left(\begin{array}{c}
         W^{3}\\
         W^{1}\ W^{2}
         \end{array}\right)$. These integers $N_{W^{1}, W^{2}}^{W^{3}}$ are usually called the
         \emph{fusion rules}.
\end{defn}

\begin{rmk}\label{Intertwining expression} (\cite{FZ92}) 
    Let $M^{i}=\oplus_{n\in\mathbb{Z}}M^{i}(n)$, $i=1, 2, 3$ be irreducible modules for a vertex operator algebra $V$, and the corresponding conformal weights are $a_{i}$, $i=1, 2, 3$. If $I(\cdot,z)$ is an intertwining operator of type $\left(\begin{array}{c} 
           W^{3} \\
         W^{1} \ W^{2}
    \end{array}\right)$,
    then $I(\cdot,z)$ can be written as
        \[   I(v, z) = \sum_{n\in\mathbb{Z}}v(n)z^{-n-1}z^{-a_1-a_2+a_3}  \]
    such that for honogeneous $v\in M^{1},$ $v(n)M^{2}(m)\subset M^{3}(m+deg v-1-n)$, where $deg v = d$ means $v\in M^{1}(d).$
\end{rmk}




From \cite{ADL05}, we have the following proposition.  

\begin{prop}
    Let $V$ be a vertex operator algebra and let $W^1$, $W^2$, $W^3$ be $V$-modules among which $W^1$ and $W^2$ are irreducible. Suppose that $U$ is a vertex operator subalgebra of $V$ (with the same Virasoro element) and that $N^1$ and $N^2$ are irreducible $U$-submodules of $W^1$ and $W^2$, respectively. Then the restriction map from
        $I_{V}\left(
            \begin{array}{c}
                 \ W^{3}\ \\
                W^{1} \ W^{2}
            \end{array}\right)$ to 
        $I_{U}\left( 
        \begin{array}{c}
             \ W^{3}\ \\
            N^{1} \ N^{2}
        \end{array}\right)$ is injective. In particular,
        \begin{equation} 
            dim I_{V}\left(
            \begin{array}{c}
                 \ W^{3}\ \\
                W^{1} \ W^{2}
            \end{array}\right) 
            \leqslant dim
            I_{U}\left( 
            \begin{array}{c}
                 \ W^{3}\ \\
                N^{1} \ N^{2}
            \end{array}\right)  
        \end{equation}
\end{prop}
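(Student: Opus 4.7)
My plan is to show that the restriction map is injective by proving that any $V$-intertwining operator $\mathcal{Y}$ satisfying $\mathcal{Y}(u, z)v = 0$ for all $u \in N^1, v \in N^2$ must vanish identically; the dimensional inequality then follows at once. The argument rests on the $V$-irreducibility of $W^1$ and $W^2$ combined with the Jacobi identity for intertwining operators, and makes essential use of the hypothesis that $U$ has the same Virasoro element as $V$.

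The starting observation is that, since $W^i$ is an irreducible $V$-module and $N^i$ is a nonzero subspace, the $V$-submodule of $W^i$ generated by $N^i$ equals $W^i$: every $u \in W^1$ is a finite linear combination of vectors of the form $a^{(1)}_{n_1}\cdots a^{(k)}_{n_k}u_0$ with $a^{(j)} \in V$, $n_j \in \mathbb{Z}$, $u_0 \in N^1$, and similarly for $W^2$. To propagate the vanishing of $\mathcal{Y}$ from $N^1 \otimes N^2$ to $W^1 \otimes W^2$, I would use the Jacobi identity for intertwining operators. Taking $\mathrm{Res}_{z_1}$ produces the iterate formula
\[
    \mathcal{Y}(a_n u, z_2)v = \sum_{i \geq 0}\binom{n}{i}(-z_2)^i a_{n-i}\mathcal{Y}(u, z_2)v - \sum_{i \geq 0}\binom{n}{i}(-z_2)^{n-i}\mathcal{Y}(u, z_2)(a_i v),
\]
while $\mathrm{Res}_{z_0}$ produces the commutator formula $[a_m, \mathcal{Y}(u, z)] = \sum_{i \geq 0}\binom{m}{i}z^{m-i}\mathcal{Y}(a_i u, z)$. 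Applying these iteratively, one can reduce $\mathcal{Y}(u, z)v$ for general $u \in W^1, v \in W^2$ to a formal expression in the evaluations $\mathcal{Y}(u_0, z)v_0$ with $u_0 \in N^1, v_0 \in N^2$, each of which vanishes by hypothesis, giving $\mathcal{Y}(u, z)v = 0$.

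The main obstacle is the coupling between the two slots of $\mathcal{Y}$: the iterate formula applied to $\mathcal{Y}(a_n u, z)v$ introduces terms $\mathcal{Y}(u, z)(a_i v)$ in which the $v$-complexity has grown, and the commutator formula produces the analogous phenomenon on the $u$-side. Consequently a one-sided induction on either slot alone does not close. My resolution is a joint induction on the filtration of $W^1 \otimes W^2$ obtained by letting $V$-modes act on $N^1 \otimes N^2$, applying the iterate and commutator formulas simultaneously. The hypothesis that $U$ and $V$ share the Virasoro element is essential because it forces the $U$- and $V$-gradings of $W^i$ to agree, so that modes $a_n$ with $a \in U$ preserve $N^1$ and $N^2$ and contribute trivially to the recursion, while the $V$-irreducibility of $W^1$ and $W^2$ controls the modes outside $U$.
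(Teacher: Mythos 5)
This proposition is quoted in the paper from \cite{ADL05} without proof, so the comparison must be with the standard argument given there. Your overall strategy --- show that an intertwining operator vanishing on $N^1\otimes N^2$ vanishes identically by propagating the vanishing through modes of $V$, using irreducibility of $W^1$ and $W^2$ --- is the right one, and you have correctly located the real difficulty: the raw iterate formula turns $\mathcal{Y}(a_nu,z)v$ into terms $a_{n-i}\mathcal{Y}(u,z)v$ \emph{plus} terms $\mathcal{Y}(u,z)(a_iv)$, while the commutator formula does the reverse. But your proposed resolution, a ``joint induction on the filtration of $W^1\otimes W^2$ \dots applying the iterate and commutator formulas simultaneously,'' is never carried out and, as described, does not close. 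The two formulas merely move a mode from one slot to the other: starting from $\mathcal{Y}(u,z)(a_iv)$, the commutator formula returns you to terms of the form $\mathcal{Y}(a_ju,z)v$, so any complexity measure that counts the total number of modes applied to $N^1\otimes N^2$ is conserved rather than decreased, and the recursion can cycle indefinitely. Nothing in your write-up identifies a quantity that strictly drops, so the central step of the proof is missing.

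The missing ingredient is weak associativity for intertwining operators: multiply the Jacobi identity by $z_1^{l}$ with $l$ chosen so that $z_1^{l}Y_{W^2}(a,z_1)v$ contains no negative powers of $z_1$, then take $\mathrm{Res}_{z_1}$. This kills the cross term outright and yields $(z_0+z_2)^{l}\,\mathcal{Y}(Y_{W^1}(a,z_0)u,z_2)v=(z_0+z_2)^{l}\,Y_{W^3}(a,z_0+z_2)\mathcal{Y}(u,z_2)v$. Hence $\mathcal{Y}(u,z)v=0$ forces $\mathcal{Y}(a_nu,z)v=0$ for all $a\in V$ and $n\in\mathbb{Z}$ with the \emph{same} $v$ (one cancels $(z_0+z_2)^{l}$ using lower truncation in $z_2$). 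Iterating this and using the irreducibility of $W^1$ clears the first slot entirely: $\mathcal{Y}(u,z)v=0$ for all $u\in W^1$ and $v\in N^2$. Only then does the commutator formula close the second slot, because the terms $\mathcal{Y}(a_iu,z)v$ it produces now vanish for every $a_iu\in W^1$; irreducibility of $W^2$ finishes the argument. Finally, your claim that the ``same Virasoro element'' hypothesis is essential to the recursion is misplaced: the injectivity argument only needs $N^1$ and $N^2$ to be nonzero subspaces. That hypothesis, together with the $N^i$ being $U$-submodules, is what guarantees that the restriction of a $V$-intertwining operator satisfies the $L(-1)$-derivative property and the Jacobi identity for elements of $U$, i.e., that the restriction map actually lands in $I_{U}\left(\begin{array}{c}\ W^{3}\ \\ N^{1}\ N^{2}\end{array}\right)$.
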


\begin{defn}
    Let $V$ be a vertex operator algebra, and $W^1$ , $W^2$ be two $V$-modules. A pair $(W, F(\cdot, z))$, which consists of a $V$-module $W$ and an intertwining operator $F(\cdot, z)$ of type 
         $\left( 
        \begin{array}{c}
             \  W \ \\
            W^{1} \ W^{2}
        \end{array}\right)$, 
    is called a tensor product (or fusion product) of the ordered pair $W^1$ and $W^2$ if for any $V$-module $M$ and any intertwining operator $I(\cdot, z)$ of type
        $\left( 
        \begin{array}{c}
             \  M \ \\
            W^{1} \ W^{2}
        \end{array}\right)$, 
    there exists a unique $V$-module homomorphism $f$ from $W$ to $M$ such that $I(\cdot, z) = f \circ F(\cdot, z)$. In this case, we denote the tensor product $(W, F(\cdot, z))$ by $W^1 \boxtimes_VW^2$.
\end{defn}

The following result is obtained in \cite{H95}, \cite{HL95-1}, \cite{HL95-2}.

\begin{thm}
    Let $V$ be a regular and selfdual vertex operator algebra of CFT type, $M^0 \cong V, M^1, \cdots, M^d$ are all inequivalent irreducible $V$-modules and the conformal weights $\lambda_i$ of $M^i$ are positive for all $i > 0$. Then the tensor product of any two $V$-modules $M \boxtimes_VN$ exists. In particular,
        \begin{equation}
            M^i \boxtimes_V M^j = \sum_{k=0}^d N_{M^i, M^j}^{M^k}M^k,
        \end{equation}
    for any $i, j \in \{0, 1, \cdots, d\}$.
\end{thm}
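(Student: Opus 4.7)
The plan is to follow the Huang--Lepowsky construction of the tensor product via $P(z)$-intertwining maps and then read off the decomposition formula from the universal property combined with rationality. First, for a fixed $z \in \C$ with $z \neq 0$, I would recall the canonical isomorphism between $I_V\binom{W^3}{W^1\,W^2}$ and the space of so-called $P(z)$-intertwining maps $F : W^1 \otimes W^2 \to \overline{W^3}$, where $\overline{W^3} = \prod_\lambda (W^3)_\lambda$ is the formal completion; under this identification, an intertwining operator $I(\cdot,z)$ corresponds to $F(w_1\otimes w_2) = I(w_1,z)w_2$ once a branch of $\log z$ is fixed.

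Next I would construct the tensor product as a universal object that receives a $P(z)$-intertwining map. Following Huang--Lepowsky, one works inside the graded dual $(W^1\otimes W^2)^{*}$, cuts out a subspace by imposing a complex-analytic Jacobi identity, and then shows that its graded dual carries a $V$-module structure together with a canonical $P(z)$-intertwining map having the required universal property. The technical heart of this step is the associativity of intertwining operators: matrix coefficients of products like $Y_{W^3}(v,z_1)I(w_1,z_2)w_2$ and $I(Y_{W^1}(v,z_1-z_2)w_1,z_2)w_2$ must be shown to define convergent power series admitting a common analytic continuation. This is established by using $C_2$-cofiniteness (which follows from regularity by Remark~2.14) to produce ordinary differential equations with regular singular points at $z_1=0$, $z_1=z_2$, and $z_1=\infty$; the positivity of the conformal weights $\lambda_i$ for $i>0$ is invoked here to control the indicial exponents and guarantee the required series expansions.

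Once existence is established, the decomposition formula drops out formally. Regularity of $V$ forces $M^i \boxtimes_V M^j$ to be a finite direct sum $\bigoplus_{k=0}^{d} m_k M^k$ with $m_k \in \Z_{\geqslant 0}$, and the universal property of the tensor product yields, for each $k$, a natural isomorphism
\[
    \Hom_V(M^i\boxtimes_V M^j,\,M^k) \;\cong\; I_V\binom{M^k}{M^i\ M^j}.
\]
Taking dimensions on both sides gives $m_k = N^{M^k}_{M^i, M^j}$, which is the claimed formula.

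The main obstacle is unambiguously the existence step: proving convergence and associativity of intertwining operators is a deep analytic result resting on $C_2$-cofiniteness and positivity of the conformal weights. In the actual write-up I would cite \cite{H95}, \cite{HL95-1}, \cite{HL95-2} for these analytic inputs and devote the main exposition to verifying that the hypotheses in the statement (regularity, selfduality, CFT type, positivity of $\lambda_i$ for $i>0$) precisely match the setting in which the Huang--Lepowsky construction goes through.
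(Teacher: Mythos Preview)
The paper does not give a proof of this statement at all: it is quoted in the preliminaries as a background result, with the sentence ``The following result is obtained in \cite{H95}, \cite{HL95-1}, \cite{HL95-2}'' standing in for the argument. Your proposal is therefore not competing with any proof in the paper; rather, you are sketching the content of the cited Huang--Lepowsky papers themselves.

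As a summary of that construction your outline is broadly accurate --- the $P(z)$-tensor product realized inside $(W^1\otimes W^2)^*$, the universal property giving $\Hom_V(M^i\boxtimes_V M^j,M^k)\cong I_V\binom{M^k}{M^i\ M^j}$, and the decomposition via semisimplicity are all correct. A couple of points could be sharpened: selfduality of $V$ plays no role in the existence of the tensor product (it enters later for rigidity and modularity), and the convergence/associativity of intertwining operators via differential equations is due to Huang's subsequent work rather than the original \cite{HL95-1}, \cite{HL95-2}; those papers set up the tensor product formalism \emph{assuming} a convergence hypothesis. The remark about positivity of the $\lambda_i$ controlling indicial exponents is also somewhat imprecise --- that hypothesis is more relevant to ensuring $V$ is the unique irreducible of lowest weight and to the modular tensor category structure than to the bare existence of $M\boxtimes_V N$. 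For the purposes of this paper, however, simply citing \cite{H95}, \cite{HL95-1}, \cite{HL95-2} (and, if one wants the full analytic input, Huang's later work on differential equations and vertex tensor categories) is all that is intended.
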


Fusion rules have the following symmetric property \cite{FHL93}.

\begin{prop}   \label{fusionsymm.}
     Let $W^{i} (i=1,2,3)$ be $V$-modules. Then
     $$N_{W^{1},W^{2}}^{W^{3}}=N_{W^{2},W^{1}}^{W^{3}}, \ N_{W^{1},W^{2}}^{W^{3}}=N_{W^{1},(W^{3})^{'}}^{(W^{2})^{'}}.$$
\end{prop}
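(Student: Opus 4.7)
The plan is to exhibit explicit linear isomorphisms between the relevant spaces of intertwining operators and then read off the dimensions. For the first identity $N_{W^{1},W^{2}}^{W^{3}}=N_{W^{2},W^{1}}^{W^{3}}$ I would use the skew-symmetry construction: given an intertwining operator $I$ of type $\binom{W^{3}}{W^{1}\,W^{2}}$, define
\[ \Omega(I)(w^{2},z)w^{1} = e^{zL(-1)} I(w^{1},-z) w^{2}. \]
Three axioms must be checked to see that $\Omega(I)$ is an intertwining operator of type $\binom{W^{3}}{W^{2}\,W^{1}}$: the lower-truncation condition follows because $e^{zL(-1)}$ contributes only nonnegative powers of $z$; the $L(-1)$-derivative property follows from $[L(-1),I(v,z)]=\frac{d}{dz}I(v,z)$ together with the product rule; and the Jacobi identity for $\Omega(I)$ is deduced from the Jacobi identity for $I$ by the standard conjugation trick, namely multiplying by $e^{z_{2}L(-1)}$ and rewriting $e^{z_{2}L(-1)}Y(u,z_{1})e^{-z_{2}L(-1)}=Y(u,z_{1}+z_{2})$. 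Since $\Omega$ clearly has a two-sided inverse of the same form (with $z\mapsto -z$), it is a linear isomorphism, and the first equality follows by taking dimensions.

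For the second identity $N_{W^{1},W^{2}}^{W^{3}}=N_{W^{1},(W^{3})'}^{(W^{2})'}$ I would imitate the adjoint construction used to define the contragredient module. Given $I$ of type $\binom{W^{3}}{W^{1}\,W^{2}}$, define $I^{\natural}$ by
\[ \langle I^{\natural}(w^{1},z) f,\, w^{2}\rangle = \bigl\langle f,\, I\bigl(e^{zL(1)}(-z^{-2})^{L(0)} w^{1},\, z^{-1}\bigr) w^{2}\bigr\rangle \]
for $f\in (W^{3})'$ and $w^{2}\in W^{2}$. One checks that $I^{\natural}$ is an intertwining operator of type $\binom{(W^{2})'}{W^{1}\,(W^{3})'}$. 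Lower-truncation and the $L(-1)$-derivative property are routine; the Jacobi identity is obtained by pairing the Jacobi identity for $I$ against an element of $(W^{3})'$, performing the substitution $z_{1}\mapsto z_{1}^{-1}$, and invoking the defining formula of $Y_{M'}$ together with the $\delta$-function identity $z_{1}^{-1}\delta\bigl(\frac{z_{2}+z_{0}}{z_{1}}\bigr)=z_{2}^{-1}\delta\bigl(\frac{z_{1}-z_{0}}{z_{2}}\bigr)$. The map $I\mapsto I^{\natural}$ admits an inverse obtained by iterating the construction and using the canonical identification $(M')'\cong M$, so it is a linear isomorphism.

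The principal technical difficulty lies in verifying the Jacobi identity for $I^{\natural}$: the substitution $z\mapsto z^{-1}$ combined with the operator $e^{zL(1)}(-z^{-2})^{L(0)}$ makes the formal-variable bookkeeping delicate, and one must carefully track how conjugation by this operator reorganizes the three-point Jacobi identity. This is the intertwining-operator analogue of the original contragredient-module calculation in \cite{FHL93}, and the key step is precisely the transformation law of $Y(u,z_{1})$ under $z_{1}\mapsto z_{1}^{-1}$ together with the conjugation by $e^{zL(1)}(-z^{-2})^{L(0)}$. Once both isomorphisms are established, comparing dimensions yields the two claimed equalities.
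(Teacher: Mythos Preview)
Your proposal is correct and follows exactly the standard argument from \cite{FHL93}: the paper itself does not prove this proposition but simply states it as a known result with that citation, and the maps $\Omega$ and $I\mapsto I^{\natural}$ you describe are precisely the skew-symmetry and adjoint isomorphisms constructed there. There is nothing to add.
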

\begin{defn} 
     Let $V$ be a simple vertex operator algebra, a simple $V$-module $M$ is called a simple current if for any irreducible $V$-module, $M\boxtimes_V W$ exists and is also an irreducible $V$-module.
\end{defn}

\section{Classification and construction of irreducible modules of $L_{\widehat{\mathfrak{sl}_2}}(k,0)^{\Z_3}$}

In this section, we will introduce the cyclic group $\Z_3$ which is a subgroup of $\Aut(L_{\widehat{\frak{sl}_{2}}}(k,0))$, and realize each element of ${\Z_3}$ as an inner automorphism of $\frak{sl}_2(\C)$.
And we will classify and construct explicitly the irreducible modules of the orbifold vertex operator algebras $L_{\widehat{\mathfrak{sl}_2}}(k,0)^{\Z_3}$ for $k\geqslant 1$.

Let $h,e,f$ be a standard Chevalley basis of $\frak{sl}_2(\C)$, define automorphism $\sigma$ of $\frak{sl}_2(\C)$ as follows:
$$
\sigma(h) = h, \  \sigma(e) = \frac{-1+\sqrt{-3}}{2} e, \ \sigma(f) = \frac{-1-\sqrt{-3}}{2} f.
$$
It is obvious that the automorphic subgroup generated by $\sigma$ is isomorphic to the cyclic group ${\Z_3}$, and ${\Z_3}$ can be lifted to an automorphic subgroup of the vertex operator algebra $L_{\widehat{\mathfrak{sl}_{2}}}(k,0)$.

In the following statement, we denote $L_{\widehat{\mathfrak{sl}_2}}(k,0)$ by $L(k,0)$ for simplicity and $k$ is a positive integer unless otherwise stated. 
By the quantum Galois theory \cite{DM97}, we first have the following decomposition.

\begin{thm}   \label{V decomposition}
   As a $L(k,0)^{\Z_3}$-module,
   \[ L(k,0)= L(k,0)^0 \oplus L(k,0)^1 \oplus L(k,0)^2 ,\]
   where $L(k,0)^0 (= L(k,0)^{\Z_3})$ is a simple vertex operator algebra, and $L(k,0)^0$ $($resp. $L(k,0)^1$, $L(k,0)^2$$)$ is the irreducible $L(k,0)^{\Z_3}$-module generated by the lowest weight vector $\mathbbm{1}$ $($resp. $e(-1)\1$, $f(-1)\1$$)$ with the lowest weight $0$ $($resp. $1$, $1$$)$.
\end{thm}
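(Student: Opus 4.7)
The plan is to combine Dong--Mason quantum Galois theory \cite{DM97} with a direct inspection of the weight-$0$ and weight-$1$ components of $L(k,0)$.

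First, since $\sigma$ is an order-$3$ automorphism of $L(k,0)$, specializing Definition 2.1 to $g=\sigma$ (the untwisted case) gives the eigenspace decomposition $L(k,0)=L(k,0)^0\oplus L(k,0)^1\oplus L(k,0)^2$, where the superscript records the character by which $\sigma$ acts on each piece. By construction $L(k,0)^0$ is the $\Z_3$-fixed subspace $L(k,0)^{\Z_3}$. Because $L(k,0)$ is a simple vertex operator algebra for $k\in\Z_{\geqslant 1}$ and $\Z_3$ is finite abelian, Dong--Mason quantum Galois theory immediately yields that $L(k,0)^{\Z_3}$ is simple and that each character-isotypic component---which, since $\Z_3$ is abelian, is simply an eigenspace of $\sigma$---is an irreducible $L(k,0)^{\Z_3}$-module. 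This one invocation establishes the direct-sum decomposition, the simplicity of $L(k,0)^0$ as a vertex operator algebra, and the irreducibility of all three summands.

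To pin down the lowest-weight data, recall that $L(k,0)_0=\C\mathbbm{1}$ and $L(k,0)_1=\mathfrak{sl}_2(-1)\mathbbm{1}=\C h(-1)\mathbbm{1}\oplus\C e(-1)\mathbbm{1}\oplus\C f(-1)\mathbbm{1}$. Since the action of $\sigma$ on $L(k,0)_1$ is just the lift of the action on $\mathfrak{sl}_2$, we read off $\sigma$-eigenvalues $1,\omega,\omega^{-1}$ (with $\omega=\tfrac{-1+\sqrt{-3}}{2}$) on $h(-1)\mathbbm{1}$, $e(-1)\mathbbm{1}$, $f(-1)\mathbbm{1}$ respectively. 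Under the labeling convention of the theorem (in which $L(k,0)^r$ is the $\omega^{r}$-eigenspace), this places $\mathbbm{1}\in L(k,0)^0$, $e(-1)\mathbbm{1}\in L(k,0)^1$, $f(-1)\mathbbm{1}\in L(k,0)^2$. Because $L(k,0)^1$ and $L(k,0)^2$ contain no weight-$0$ vectors (the unique weight-$0$ line $\C\mathbbm{1}$ is $\sigma$-fixed and thus entirely in $L(k,0)^0$), their lowest weight is exactly $1$, realized by the displayed vectors; and $\mathbbm{1}$ is evidently the lowest-weight vector of $L(k,0)^0$ of weight $0$. Generation of each $L(k,0)^r$ by its lowest-weight vector is then automatic from the already-established irreducibility, since an irreducible module is cyclic on any nonzero vector.

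There is no deep obstacle here: the entire statement is a formal consequence of quantum Galois theory applied to the simple vertex operator algebra $L(k,0)$, together with the transparent $\sigma$-action on $L(k,0)_{\leqslant 1}$. The only mild bookkeeping is aligning the superscript labeling of $L(k,0)^r$ with the $\sigma$-eigenvalue $\omega^{r}$; this convention is fixed once and reused throughout Sections 3 and 4 in parameterizing twisted modules and their decompositions.
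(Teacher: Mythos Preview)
Your proof is correct and follows essentially the same route as the paper: invoke Dong--Mason quantum Galois theory \cite{DM97} to obtain the decomposition into three inequivalent irreducible $L(k,0)^{\Z_3}$-modules (with $L(k,0)^{\Z_3}$ simple), then inspect the $\sigma$-action on $L(k,0)_0=\C\mathbbm{1}$ and $L(k,0)_1\cong\mathfrak{sl}_2$ to locate the lowest-weight vectors. Your explicit remark that $L(k,0)^1$ and $L(k,0)^2$ contain no weight-$0$ vectors, and that generation follows from irreducibility, makes slightly more visible what the paper leaves implicit, but the argument is the same.
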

\begin{proof}
   Since ${\Z_3}$ is a cyclic group which has only three 1-dimensional irreducible modules. Let Irr$(\Z_3)$ denote the set of irreducible characters of ${\Z_3}$ which  contains three irreducible characters $\chi_0$ (unit representation), $\chi_1$, and $\chi_2$ up to isomorphism. From \cite{DM97}, ${L(k,0)} = \oplus_{\chi \in \Irr(\Z_3)}{L(k,0)}_{\chi}$ is a decomposition of $L(k,0)$ into simple $L(k,0)^{\Z_3}$-modules. Moreover, ${L(k,0)}_{\chi}$ is nonzero for any $\chi \in \Irr(\Z_3)$, and $L(k,0)_{\chi}$ and $L(k,0)_{\mu}$ are equivalent $L(k,0)^{\Z_3}$-module if and only if $\chi=\mu$. Obviously, $L(k,0)^{\Z_3}$ is an irreducible $L(k,0)^{\Z_3}$-module affording the unit character $\chi_0$. Observing the action of ${\Z_3}$ on $L(k,0)_1$ which is isomorphic to $\mathfrak{sl}_2(\C)$, we find that $e(-1)\1$ and $f(-1)\1$ generate two inequivalent irreducible modules according to $\chi_1$ and $\chi_2$, respectively. Note that $L(k,0)_0 = \mathbb{C}\mathbbm{1}$ and $L(k,0)_1 = \C h(-1)\1 \oplus \mathbb{C}(e(-1)\1) \oplus \C(f(-1)\1)$, then $\mathbbm{1}$, $e(-1)\1$ and $f(-1)\1$ are three different lowest weight vectors in $L(k,0)$ as a $L(k,0)^{\Z_3}$-module. Let $L(k,0)^0$ $($resp. $L(k,0)^1, L(k,0)^2$$)$ be the irreducible $L(k,0)^{\Z_3}$-module generated by the lowest weight vector $\mathbbm{1}$ $($resp. $e(-1)\1$, $f(-1)\1$$)$ with the lowest weight $0$ $($resp. $1$, $1$$)$. Then the irreducible $L(k,0)^{\Z_3}$-module decomposition $L(k,0)= \oplus_{j=0}^{2}L(k,0)^j$ holds.
\end{proof}

Let $\alpha$ be the simple root of $\mathfrak{sl}_2(\C)$ with $\langle \alpha, \alpha  \rangle = 2$. From \cite{FZ92}, the integrable highest weight $L(k,0)$-modules $L(k, i)$ for $0 \leqslant i \leqslant k$ provide a complete list of irreducible $L(k,0)$-modules with the lowest weight spaces being  $(i+1)$-dimensional irreducible $\mathfrak{sl}_2(\C)$-modules $L(\frac{i\alpha}{2})$, respectively. For $0\leqslant i\leqslant k$, let $\{v^{i,j}|0 \leqslant j \leqslant i\}$ be the basis of $L(\frac{i\alpha}{2})$ according to the $\mathfrak{sl}_2$-triple $\{ h, e, f \}$ with the following action of $\widehat{\mathfrak{sl}_2}$ on $L(\frac{i\alpha}{2})$, namely
\[ h(0)v^{i,j} = (i - 2j) v^{i,j} \quad \text{for} \quad 0 \leqslant j \leqslant i , \]
\[ e(0)v^{i,0} = 0, \quad e(0)v^{i,j} = (i - j + 1)v^{i,j-1} \quad \text{for} \quad 1 \leqslant j \leqslant i , \]
\[ f(0)v^{i,i} = 0, \quad f(0)v^{i,j} = (j + 1)v^{i,j+1} \quad \text{for} \quad 0 \leqslant j \leqslant i-1 , \]
\[ a(n)v^{i,j} = 0 \quad \text{for} \quad a \in \{ h, e, f \}, \quad n \geqslant 1 .\]

The following lemma will be very useful later.
\begin{lem}  \label{two 0 in L(k,k)}
    $e(-1)v^{k,0} = 0$, $f(-1)v^{k,k} = 0$ in $L(k,k)$.
\end{lem}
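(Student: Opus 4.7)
The plan is to derive both vanishings from the standard description of $L(k,k)$ as the irreducible integrable highest weight $\widehat{\mathfrak{sl}_2}$-module at level $k$ whose top space is the $(k+1)$-dimensional $\mathfrak{sl}_2$-module with basis $\{v^{k,j}\}_{0\le j\le k}$. The first identity will come from recognising $e(-1)v^{k,0}$ as a singular vector of the wrong $h$-weight, forcing it to be zero; the second will follow by transport along the VOA automorphism of $L(k,0)$ induced by the $\mathfrak{sl}_2$-involution $e\leftrightarrow f$, $h\mapsto -h$.

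For the first identity I would check that $e(-1)v^{k,0}$ is annihilated by the whole positive part of $\widehat{\mathfrak{sl}_2}$ (i.e.\ by $e(0)$ and by $a(n)$ for $n\ge 1$, $a\in\{h,e,f\}$). Using $e(0)v^{k,0}=0$, $h(0)v^{k,0}=kv^{k,0}$, and $a(n)v^{k,0}=0$ for $n\ge 1$, the commutator identities in $\widehat{\mathfrak{sl}_2}$ give
\begin{align*}
    e(0)\bigl(e(-1)v^{k,0}\bigr) &= [e(0),e(-1)]v^{k,0}=0,\\
    f(1)\bigl(e(-1)v^{k,0}\bigr) &= [f(1),e(-1)]v^{k,0}=\bigl(k-h(0)\bigr)v^{k,0}=0,
\end{align*}
and $a(n)\bigl(e(-1)v^{k,0}\bigr)=0$ for $n\ge 2$ is immediate from the bracket relations. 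Thus $e(-1)v^{k,0}$ is a singular vector in the irreducible highest weight module $L(k,k)$. But the only singular vectors of $L(k,k)$ are scalar multiples of $v^{k,0}$, and $e(-1)v^{k,0}$ has $h$-weight $k+2\neq k$, so $e(-1)v^{k,0}=0$.

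For the second identity, let $\psi$ denote the VOA automorphism of $L(k,0)$ lifted from the $\mathfrak{sl}_2$-involution $e\leftrightarrow f$, $h\mapsto -h$, so that $\psi$ sends the modes $e(n), f(n), h(n)$ to $f(n), e(n), -h(n)$ respectively. The module $L(k,k)\circ\psi$ is irreducible, has the same conformal weight $k/4$ and the same top-layer dimension $k+1$ as $L(k,k)$, so by the classification of irreducible $L(k,0)$-modules in \cite{FZ92} there is an $L(k,0)$-module isomorphism $\phi\colon L(k,k)\to L(k,k)\circ\psi$. On $L(k,k)\circ\psi$ the operators playing the roles of $e(0)$ and $h(0)$ are the original $f(0)$ and $-h(0)$, so any highest weight vector in the target, viewed in the underlying space $L(k,k)$, is annihilated by $f(0)$ and of $h(0)$-eigenvalue $-k$; such a vector is a nonzero multiple of $v^{k,k}$. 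Hence $\phi(v^{k,0})=c\,v^{k,k}$ for some $c\neq 0$, and applying $\phi$ to $e(-1)v^{k,0}=0$ yields $0=f(-1)\,\phi(v^{k,0})=c\,f(-1)v^{k,k}$, so $f(-1)v^{k,k}=0$.

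The main obstacle is the bookkeeping in the second step: one needs to confirm that $\psi$ really extends to a bona fide VOA automorphism of $L(k,0)$ (standard but worth verifying against the Jacobi identity, using that $\psi$ preserves the Virasoro element and the invariant form on $\mathfrak{sl}_2$) and to identify $\phi(v^{k,0})$ with the correct scalar multiple of $v^{k,k}$ rather than some other top-level vector. A parallel shortcut that avoids twisted modules altogether is to act on $L(k,k)$ by the horizontal Weyl-group element $s=\exp(e(0))\exp(-f(0))\exp(e(0))$, which is well defined on any integrable module and satisfies $s\,v^{k,0}=\pm v^{k,k}$ together with $s\,e(-1)\,s^{-1}=-f(-1)$; applying $s$ to the first identity delivers $f(-1)v^{k,k}=0$ at once.
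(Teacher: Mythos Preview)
Your argument is correct. The core idea for $e(-1)v^{k,0}=0$ is the same as the paper's: show the element is singular (the paper phrases this in VOA language, checking that $(a(-1)\1)_1$ kills $e(-1)v^{k,0}$ for $a\in\{h,e,f\}$ and concluding it would be a ``lowest weight vector'' in the irreducible module $L(k,k)$, hence zero), while you phrase it in $\widehat{\mathfrak{sl}_2}$-language. For the second identity the paper simply says ``similarly'' and repeats the argument with $f$ in place of $e$; your route via the Cartan involution (or the Weyl-group element $s$) is a legitimate and slightly more conceptual alternative that transports the first identity to the second rather than redoing the computation.
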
 
\begin{proof}
    Since \{$h(-1)\1$, $e(-1)\1$, $f(-1)\1$\} is a gererator set of the simple vertex operator algebra $L(k,0)$. And
         \[ (h(-1)\1)_1e(-1)v^{k,0}=(e(-1)\1)_1e(-1)v^{k,0}=(f(-1)\1)_1e(-1)v^{k,0}=0\] 
    implies that $e(-1)v^{k,0}$ is a lowestest weight vector in the irreducible $L(k,0)$-module $L(k,k)$, yielding a contradiction. Thus $e(-1)v^{k,0} = 0$ in $L(k,k)$. Similarly, we can prove that $f(-1)v^{k,k} = 0$ in $L(k,k)$.
\end{proof}

It is well known that $L(k,0)$ is a regular and selfdual vertex operator algebra of CFT type for $k\in\Z_{\geqslant 1}$ \cite{FZ92}, \cite{LeL04}. From Theorem \ref{VG prop}, $L(k,0)^{\Z_3}$ is again a regular and selfdual vertex operator algebra of CFT type. Thus, from Theorem \ref{DRX17 thm3}, any irreducible $L(k,0)^{\Z_3}$-module occurs in an irreducible $\tau$-twisted $L(k,0)$-module for some $\tau \in \Z_3 = \{ \si^0=id, \si, \si^2 \}$.

Now we are in a position to classify and construct all the irreducible $L(k,0)^{\Z_3}$-modules coming from the irreducible untwisted(i.e., $id$-twisted) $L(k,0)$-modules $L(k,i)$ $(0 \leqslant i \leqslant k)$. 
We first determine the subgroup ${(\Z_3)}_{L(k,i)}$ of ${\Z_3}$ which contains $\tau \in \Z_3$ such that $L(k,i)$ is $\tau$-stable.

\begin{lem}  \label{irr Z3_M subgroup}
   ${(\Z_3)}_{L(k,i)} = \Z_3$ for any $0 \leqslant i \leqslant k$.
\end{lem}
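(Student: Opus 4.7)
The plan is to realize $\si$ as an inner automorphism of $\mathfrak{sl}_2(\C)$ and then lift it to an explicit intertwining operator on each $L(k,i)$. Since $\ad(h)$ acts on $h, e, f$ with eigenvalues $0, 2, -2$ respectively, one gets directly
\[
\si \;=\; \exp\!\bigl(\tfrac{\pi\sqrt{-1}}{3}\ad(h)\bigr)
\]
as automorphisms of $\mathfrak{sl}_2(\C)$, so $\si$ is inner.

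Next I would show $L(k,i)\circ\si \cong L(k,i)$ as $L(k,0)$-modules by exhibiting a concrete isomorphism, namely
\[
\phi \;:=\; \exp\!\bigl(\tfrac{\pi\sqrt{-1}}{3}\,h(0)\bigr) : L(k,i) \longrightarrow L(k,i).
\]
Since $h(0)$ acts semisimply on $L(k,i)$ with integer eigenvalues (standard for integrable $\widehat{\mathfrak{sl}_2}$-modules), $\phi$ is a well-defined linear automorphism. The affine commutation relation $[h(0), a(n)] = [h,a](n)$ for $a \in \{h,e,f\}$ and $n\in\Z$ (the central term vanishes because the degree of $h(0)$ is zero) gives
\[
\phi\, a(n)\, \phi^{-1} \;=\; \bigl(\exp\!\bigl(\tfrac{\pi\sqrt{-1}}{3}\ad(h)\bigr) a\bigr)(n) \;=\; (\si a)(n).
\]
Because $\{h(-1)\1, e(-1)\1, f(-1)\1\}$ generates $L(k,0)$ as a vertex operator algebra and $\si$ is a VOA automorphism, this identity propagates through the iterate formula to $\phi\, Y_{L(k,i)}(v,z)\, \phi^{-1} = Y_{L(k,i)}(\si v, z)$ for every $v \in L(k,0)$. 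Hence $\phi$ realizes the desired $L(k,0)$-module isomorphism $L(k,i)\circ\si \cong L(k,i)$, which shows $\si \in (\Z_3)_{L(k,i)}$.

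Since $(\Z_3)_{L(k,i)}$ is a subgroup of $\Z_3$ containing $\si$, it must equal $\Z_3$, as claimed. The main conceptual point is recognizing that the diagonal automorphism $\si$, defined via the root space decomposition, is actually inner and can therefore be canonically lifted to every integrable module by exponentiating the corresponding Cartan element; the rest is a routine verification on a generating set. No obstacle of substance is expected beyond carefully tracking that $h(0)$ is indeed integer-valued on each $L(k,i)$ (which follows from the integrable highest-weight structure).
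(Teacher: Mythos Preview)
Your proof is correct but proceeds by a genuinely different route from the paper's. The paper argues indirectly: since the Virasoro element $\omega$ is $\tau$-fixed for every $\tau\in\Z_3$, the module $L(k,i)\circ\tau$ has the same conformal weight $\tfrac{i(i+2)}{4(k+2)}$ as $L(k,i)$; because these weights are pairwise distinct for $0\leqslant i\leqslant k$, the classification of irreducible $L(k,0)$-modules forces $L(k,i)\circ\tau\cong L(k,i)$. Your argument is instead constructive: you recognize $\si=\exp\bigl(\tfrac{\pi\sqrt{-1}}{3}\ad(h)\bigr)$ as inner and exhibit the intertwiner $\phi=\exp\bigl(\tfrac{\pi\sqrt{-1}}{3}\,h(0)\bigr)$ explicitly, checking the required conjugation identity on the generating set $\{h,e,f\}$ and propagating it via the iterate formula. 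The paper's method is quicker for the bare stability statement and leans on the known classification, which is a reusable technique whenever irreducibles are separated by an invariant such as conformal weight. Your method buys you the actual isomorphism $\phi$ for free, and indeed the paper has to construct essentially the same maps $\phi(\si^r)$ by hand immediately after the lemma (and later records the identity $\si=e^{2\pi\sqrt{-1}h^{(1)}(0)}$ with $h^{(1)}=\tfrac{1}{6}h$, which is exactly your observation). So your approach effectively merges the lemma with the subsequent construction into a single step.
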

\begin{proof}
   For any $\tau \in \Z_3$, by the definition of $L(k,i) \circ \tau$, $L(k,i)$ and $L(k,i) \circ \tau$ have the same lowest weight. Observe that the lowest weights $\frac{i(i+2)}{4(k+2)}(0 \leqslant i \leqslant k)$ are pairwise different which implies that all the irreducible $L(k, 0)$-modules $L(k,i)(0 \leqslant i \leqslant k)$ are $\tau$-stable. Thus, ${(\Z_3)}_{L(k,i)} = \Z_3$.
\end{proof}

From (\ref{any irr twist mod decomp}), we know that $L(k,i)(0 \leqslant i \leqslant k)$ can be decomposed as $L(k,0)^{\Z_3}$-modules, and the case of $i = 0$ has been stated in Theorem \ref{V decomposition}. For $0 < i \leqslant k$, we define $\phi(\sigma^r)$ $(r=0, 1, 2)$ from $L(k,i)$ to $L(k,i)$ as follows:
    \begin{align}
        \phi(\sigma^0): v^{i,j} & \mapsto v^{i,j},  \label{phi(sigma 0)} \\
        \phi(\sigma^1): v^{i,j} & \mapsto \xi^{i-j}v^{i,j},       \label{phi(sigma 1)} \\
        \phi(\sigma^2): v^{i,j} & \mapsto \xi^{j-i}v^{i,j},  \label{phi(sigma 2)}
    \end{align}
where $\xi=\frac{-1+\sqrt{-3}}{2}$. It is easy to verify that $\phi(\sigma^r)(r=0, 1, 2)$ are $L(k,0)$-module isomorphisms. Using Theorem \ref{DRX17 thm1} and Theorem \ref{DRX17 thm2}, we have the following result.

\begin{thm} \label{irr decomposition}
    For each $0 < i \leqslant k$, we have the following irreducible $L(k,0)^{\Z_3}$-module decomposition.
    \begin{enumerate}
      \item
        If $k = 1$, $i = 1$, then
        \begin{equation}
         L(k,i) = L(1,1) = L(1,1)^0 \oplus L(1,1)^1 \oplus L(1,1)^2,
        \end{equation}
        where $L(1,1)^0$ $($resp. $L(1,1)^1$, $L(1,1)^2$$)$ is the irreducible $L(1,0)^{\Z_3}$-module generated by the lowest weight vector $v^{1,1}$ $($resp. $v^{1,0}$, $f(-2)v^{1,1}$$)$ with the lowest weight $\frac{1}{4}$ $($resp. $\frac{1}{4}$, $\frac{9}{4}$$)$.
      \item
        If $k > 1$, $i = 1$, then
        \begin{equation}
         L(k,i) = L(k,1) = L(k,1)^0 \oplus L(k,1)^1 \oplus L(k,1)^2,
        \end{equation}
        where $L(k,1)^0$ $($resp. $L(k,1)^1$, $L(k,1)^2$$)$ is the irreducible $L(k,0)^{\Z_3}$-module generated by the lowest weight vector $v^{1,1}$ $($resp. $v^{1,0}$, $f(-1)v^{1,1}$$)$ with the lowest weight $\frac{3}{4(k+2)}$ $($resp. $\frac{3}{4(k+2)}$, $\frac{4k+11}{4(k+2)}$$)$.
      \item
        If $1 < i \leqslant k$, then
        \begin{equation}
        L(k,i) = L(k,i)^0 \oplus L(k,i)^1 \oplus L(k,i)^2,
        \end{equation}
        where $L(k,i)^0$, $L(k,i)^1$, and $L(k,i)^2$ are the irreducible $L(k,0)^{\Z_3}$-modules generated by the lowest weight vectors $v^{i,i}$,  $v^{i,i-1}$ and $v^{i,i-2}$ with the same lowest weight $\frac{i(i+2)}{4(k+2)}$, respectively.
    \end{enumerate}
\end{thm}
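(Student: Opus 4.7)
The plan is to invoke the isotypic decomposition (\ref{any irr twist mod decomp}) together with Theorem \ref{DRX17 thm1} applied to $g = \mathrm{id}$ and $M = L(k,i)$. Lemma \ref{irr Z3_M subgroup} gives ${(\Z_3)}_{L(k,i)} = \Z_3$, and the explicit formulas (\ref{phi(sigma 0)})--(\ref{phi(sigma 2)}) exhibit $\phi$ as an honest (not merely projective) action of $\Z_3$, so the cocycle is trivial and the three irreducible characters of $\C[\Z_3]$ are given by $\chi_r(\sigma) = \xi^r$. This immediately produces the three-summand decomposition $L(k,i) = L(k,i)^0 \oplus L(k,i)^1 \oplus L(k,i)^2$ into pairwise inequivalent nonzero irreducible $L(k,0)^{\Z_3}$-modules, where $L(k,i)^r$ is the $\xi^r$-eigenspace of $\phi(\sigma)$.

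What remains is to pinpoint a specific lowest weight vector in each summand. Since $\phi(\sigma)v^{i,j} = \xi^{i-j}v^{i,j}$, the candidates $v^{i,i}$, $v^{i,i-1}$, $v^{i,i-2}$ lie in $L(k,i)^0$, $L(k,i)^1$, $L(k,i)^2$ respectively whenever they exist. In case (3), $i \geq 2$, so all three live in the bottom weight space of $L(k,i)$, automatically have the minimum $L(0)$-weight $\frac{i(i+2)}{4(k+2)}$ in their respective components, and generate those components by irreducibility. This disposes of case (3).

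In cases (1) and (2), $i = 1$ and the bottom weight space $\C v^{1,0} \oplus \C v^{1,1}$ carries only characters $\chi_1$ and $\chi_0$, so the lowest weight of $L(k,1)^2$ is strictly larger than $\frac{3}{4(k+2)}$. The relation $\sigma(f) = \xi^{-1}f$ gives $\phi(\sigma) f(-n) \phi(\sigma)^{-1} = \xi^{-1} f(-n)$, so $f(-n) v^{1,1}$ sits in $L(k,1)^2$ for any $n \geq 1$. When $k > 1$, Lemma \ref{two 0 in L(k,k)} does not force $f(-1)v^{1,1} = 0$ in $L(k,1)$ (its hypothesis requires $i = k$), so $f(-1)v^{1,1}$ is a nonzero character-$\chi_2$ vector at weight $\frac{3}{4(k+2)} + 1 = \frac{4k+11}{4(k+2)}$, and irreducibility of $L(k,1)^2$ finishes case (2).

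The main obstacle is case (1), where Lemma \ref{two 0 in L(k,k)} now forces $f(-1)v^{1,1} = 0$ in $L(1,1)$, so the candidate must be raised to $f(-2)v^{1,1}$ at weight $\frac{9}{4}$. I would first rule out character-$\chi_2$ vectors at weight $\frac{5}{4}$ by writing a spanning set $\{x(-1)v^{1,j} : x \in \{h,e,f\},\, j \in \{0,1\}\}$ for that level, eliminating $e(-1)v^{1,0}$ and $f(-1)v^{1,1}$ via Lemma \ref{two 0 in L(k,k)}, and verifying that the remaining four vectors carry only characters $\chi_0$ and $\chi_1$. Then I would establish $f(-2)v^{1,1} \neq 0$ in $L(1,1)$ using the Frenkel--Kac realization $L(1,1) \cong V_{L_2 + \alpha/2}$, which affords an explicit lattice basis against which nonvanishing is immediate. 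Together these identify $f(-2)v^{1,1}$ as the lowest weight vector of $L(1,1)^2$ at weight $\frac{9}{4}$, completing case (1).
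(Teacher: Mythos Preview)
Your proposal is correct and follows essentially the same route as the paper: decompose $L(k,i)$ into $\phi(\sigma)$-eigenspaces, invoke Theorem \ref{DRX17 thm1} for irreducibility and inequivalence, and then locate the lowest weight vector in each eigenspace case by case using Lemma \ref{two 0 in L(k,k)}. You are in fact more thorough than the paper in two places---checking that the weight-$\tfrac{5}{4}$ layer of $L(1,1)$ carries no $\chi_2$-vectors, and arguing $f(-2)v^{1,1}\neq 0$ via the lattice realization---both of which the paper simply asserts. The one slip is the inference ``Lemma \ref{two 0 in L(k,k)} does not force $f(-1)v^{1,1}=0$, so $f(-1)v^{1,1}$ is nonzero'': absence of a vanishing lemma is not a nonvanishing proof, but the gap is easily closed since for $k>1$ the singular vector of the Weyl module $V(k,1)$ lives at conformal weight $\tfrac{3}{4(k+2)}+k>\tfrac{3}{4(k+2)}+1$.
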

\begin{proof}
    The simplicity of $L(k,i)$ shows that $\tau \mapsto \phi(\tau)$ gives a projective representation of ${\Z_3}$ on $L(k,i)$. By Lemma \ref{irr Z3_M subgroup}, the ${\Z_3}$-orbit $L(k,i) \circ \Z_3$ of $L(k,i)$ only contains itself. Let $\alpha_{L(k,i)}$ be the corresponding $2$-cocycle in $C^2(\Z_3, \mathbb{C}^*)$. Then $L(k,i)$ is a module for the twisted group algebra $\mathbb{C}^{\alpha_{L(k,i)}}[\Z_3]$ with relation
    $\phi(\sigma)\phi(\sigma) = \phi(\sigma^2)$. 
    The twisted group algebra $\mathbb{C}^{\alpha_{L(k,i)}}[\Z_3]$ is a commutative semisimple associative algebra which has three irreducible modules of dimension one.
    Let $L(k,i) = \oplus_{j=0}^{j=2}L(k,i)^j$ be the eigenspace decomposition, where $L(k,i)^j$ is the eigenspace for $\phi(\sigma)$ on $L(k,i)$ with eigenvalue $e^{\frac{2\pi \sqrt{-1}j}{3}}$.
    From the definition of $\phi(\si)$, we know that $v^{i,i}$ is a lowest weight vector of $L(k,i)^0$ and $v^{i,i-1}$ is a lowest weight vector of $L(k,i)^1$. However, the lowest weight vectors of $L(k,0)^2$ depend on the value of $k$ and $i$.
    From lemma \ref{two 0 in L(k,k)}, we know that $e(-1)v^{1,0}=f(-1)v^{1,1} = 0$ in $L(1,1)$. Thus $f(-2)v^{1,1}$ is a lowest weight vector of $L(k,0)^2$ if $k=1$ and $f(-1)v^{1,1}$ is a lowest weight vector of $L(k,0)^2$ if $k>1$.
    From Theorem \ref{DRX17 thm1},
    we know that $L(k,i)^j$, $j = 0, 1, 2$ are inequivalent irreducible $L(k,0)^{\Z_3}$-modules for fixed $1 \leqslant i\leqslant k$.
    Therefore, the decomposition of $L(k,i)$ into inequivalent irreducible $L(k,0)^{\Z_3}$-modules is $L(k,i) = \oplus_{j=0}^{j=2}L(k,i)^j$.
\end{proof}

Let $h^{(r)}=\frac{r}{6}h$, $r \in \Z_{\geqslant 0}$. Direct calculations yield that
    \[ L(n)h^{(r)} = \delta_{n,0}h^{(r)}, \quad h^{(r)}(n)h^{(r)} = \delta_{n,1}\frac{r^2k}{18}\1,\quad \text{for} \quad n \in \mathbb{Z}_{\geqslant 1}, \]
    \[ h^{(r)}(0)e = \frac{r}{3}e, \quad h^{(r)}(0)f = -\frac{r}{3}f, \quad h^{(r)}(0)h^{(r)} = 0,\]   
    where $L(n)=\omega(n+1)$, $\omega$ is the conformal vector of $L(k,0)$. These equations show that $h^{(r)}(0)$ acts on $L(k,0)$ semisimply with rational eigenvalues. From \cite{Li96-2}, we know that $e^{2\pi \sqrt{-1}h^{(r)}(0)}$ is an automorphism of $L(k,0)$. Moreover, $e^{2\pi \sqrt{-1}h^{(r)}(0)}(h) = h$, $e^{2\pi \sqrt{-1}h^{(r)}(0)}(e) = e^{\frac{2\pi \sqrt{-1}r}{3}}e$, $e^{2\pi \sqrt{-1}h^{(r)}(0)}(f) = e^{-\frac{2\pi \sqrt{-1}r}{3}}f$. Thus we have the following proposition.

\begin{prop}
    $e^{2\pi \sqrt{-1}h^{(1)}(0)} = \sigma$, $e^{2\pi \sqrt{-1}h^{(2)}(0)} = \sigma^2$.
\end{prop}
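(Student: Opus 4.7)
The plan is to exploit the fact that an automorphism of a vertex operator algebra is determined by its action on any strong generating set, together with the explicit formulas for $e^{2\pi\sqrt{-1}h^{(r)}(0)}$ that are already computed in the excerpt above the proposition. Since the affine vertex operator algebra $L(k,0)$ is strongly generated in weight one by $\{h(-1)\1,\, e(-1)\1,\, f(-1)\1\}$, which we may identify with $\{h,e,f\}\subset L(k,0)_1\cong \mathfrak{sl}_2(\C)$, any two automorphisms of $L(k,0)$ that agree on these three vectors must coincide.

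First I would note that both $e^{2\pi\sqrt{-1}h^{(r)}(0)}$ (for $r=1,2$) and $\sigma$, $\sigma^2$ are bona fide automorphisms of the vertex operator algebra $L(k,0)$: the former by the cited result of \cite{Li96-2}, which applies because $h^{(r)}(0)$ was shown to act semisimply with rational eigenvalues; the latter by the construction at the start of Section~3, where $\sigma$ is lifted from $\mathfrak{sl}_2(\C)$ to an automorphism of $L(k,0)$. Then I would simply compare values on the generators. Using the formulas recorded just before the proposition,
\[
e^{2\pi\sqrt{-1}h^{(1)}(0)}(h)=h,\qquad e^{2\pi\sqrt{-1}h^{(1)}(0)}(e)=e^{2\pi\sqrt{-1}/3}e=\tfrac{-1+\sqrt{-3}}{2}e,
\]
\[
e^{2\pi\sqrt{-1}h^{(1)}(0)}(f)=e^{-2\pi\sqrt{-1}/3}f=\tfrac{-1-\sqrt{-3}}{2}f,
\]
which exactly match $\sigma(h),\sigma(e),\sigma(f)$ by definition. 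An identical comparison with $r=2$ yields $e^{\pm 4\pi\sqrt{-1}/3}$ on $e$ and $f$, matching $\sigma^2(e)=\xi^2 e$ and $\sigma^2(f)=\xi^{-2}f$ where $\xi=\frac{-1+\sqrt{-3}}{2}$.

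Because both automorphisms in each equality agree on a generating set of $L(k,0)$ and are VOA automorphisms, they must agree on all of $L(k,0)$, giving $e^{2\pi\sqrt{-1}h^{(1)}(0)}=\sigma$ and $e^{2\pi\sqrt{-1}h^{(2)}(0)}=\sigma^2$. There is no serious obstacle here; the only thing to be careful about is verifying that $\{h(-1)\1,e(-1)\1,f(-1)\1\}$ really does strongly generate $L(k,0)$ so that the generator-comparison argument is valid, but this is a standard fact for affine vertex operator algebras at positive integer level and is already implicitly used elsewhere in the paper (for instance in the proof of Lemma~\ref{two 0 in L(k,k)}).
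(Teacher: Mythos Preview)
Your proposal is correct and follows exactly the same approach as the paper: the paper simply records the computations $e^{2\pi\sqrt{-1}h^{(r)}(0)}(h)=h$, $e^{2\pi\sqrt{-1}h^{(r)}(0)}(e)=e^{2\pi\sqrt{-1}r/3}e$, $e^{2\pi\sqrt{-1}h^{(r)}(0)}(f)=e^{-2\pi\sqrt{-1}r/3}f$ and states the proposition as an immediate consequence, without writing out a separate proof. Your version is slightly more explicit in justifying why agreement on the weight-one generators suffices, but this is the same argument.
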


For $r \in \Z_{\geqslant 0}$, let
\[\Delta(h^{(r)}, z) = z^{h^{(r)}(0)}\exp(\sum^{\infty}_{n=1}\frac{h^{(r)}(n)}{-n}(-z)^{-n}).\]
It is easy to verify that $\Delta(h^{(r)}, z) = \Delta(h^{(1)}, z)^r$. 
From \cite{Li97-2}, we have the following result.

\begin{lem}
    For each $r \in \Z_{\geqslant 0}$, $( L(k,i)^{T_r}, Y_{\sigma^r}(\cdot, z)) = ( L(k,i), Y(\Delta(h^{(r)}, z)\cdot, z) ) ( 0 \leqslant i \leqslant k )$ provide a complete list of irreducible $\sigma^r$-twisted $L(k, 0)$-modules. In particular, $( L(k,i)^{T_0}, Y_{\sigma^0}(\cdot, z)) = ( L(k,i), Y(\cdot, z) ) ( 0 \leqslant i \leqslant k )$ are all the irreducible untwisted $L(k, 0)$-modules.  
\end{lem}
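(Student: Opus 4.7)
The plan is to invoke Li's twisted-module construction from \cite{Li97-2}. Li shows that if $V$ is a vertex operator algebra and $h \in V_1$ is such that $h(0)$ acts semisimply on $V$ with rational eigenvalues and $g = e^{2\pi\sqrt{-1} h(0)}$ is a finite-order automorphism of $V$, then for any $V$-module $(M, Y_M)$ the pair $(M, Y_M(\Delta(h,z)\cdot, z))$ is a $g$-twisted $V$-module, and this construction is invertible (with inverse furnished by $\Delta(-h, z)$). The proposition just above supplies $\sigma^r = e^{2\pi\sqrt{-1} h^{(r)}(0)}$, and the weight-one bracket relations displayed immediately before it show that $h^{(r)}(0)$ indeed acts semisimply with rational eigenvalues on $L(k,0)$. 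Thus each $L(k,i)^{T_r}$ is a $\sigma^r$-twisted $L(k,0)$-module; irreducibility is preserved because Li's construction is a categorical equivalence.

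For completeness I would use that $L(k,0)$ is regular \cite{FZ92}, \cite{LeL04}, hence $\sigma^r$-rational, so only finitely many irreducible $\sigma^r$-twisted modules exist up to isomorphism. Because $\Delta(h^{(r)}, z)\Delta(-h^{(r)}, z)$ is the identity, Li's construction induces a bijection between isomorphism classes of irreducible untwisted $L(k,0)$-modules and isomorphism classes of irreducible $\sigma^r$-twisted $L(k,0)$-modules. Since $\{L(k,i) \mid 0\leqslant i \leqslant k\}$ is the complete list of the former by \cite{FZ92}, its image under the $\Delta$-functor is the complete list of the latter. The $r = 0$ case is the tautology $L(k,i)^{T_0} = L(k,i)$, since $h^{(0)} = 0$ and $\Delta(0, z)$ reduces to the identity operator.

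The main point requiring care is not conceptual but bookkeeping: verifying that the $\Delta$-operation produces the correct monodromy, i.e. that for $u$ in the $\sigma^r$-eigenspace of eigenvalue $e^{-2\pi\sqrt{-1} r'/3}$ the exponents of $z$ in $Y(\Delta(h^{(r)}, z) u, z)$ lie in the correct coset of $\Z$. This is exactly the content of Li's theorem and need not be redone here. The pairwise inequivalence of the $L(k,i)^{T_r}$ for fixed $r$ and varying $i$ is automatic from the invertibility of the $\Delta$-functor; alternatively, one can compare lowest conformal weights, which remain pairwise distinct after the shifts induced by $\Delta(h^{(r)}, z)$.
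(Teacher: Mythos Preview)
Your proposal is correct and matches the paper's approach: the paper does not give an independent proof of this lemma but simply attributes it to \cite{Li97-2}, and your argument is precisely an unpacking of Li's $\Delta$-operator theorem together with its invertibility to obtain the bijection with the known untwisted classification from \cite{FZ92}. The appeal to $\sigma^r$-rationality via regularity is harmless but unnecessary, since the invertibility of $\Delta(h^{(r)},z)$ already forces the list to be exhaustive.
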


Direct calculations yield that
    \begin{equation}
        h^{(r)}(0)\omega = 0, \quad h^{(r)}(1)\omega = h^{(r)}, \quad h^{(r)}(1)^{2}\omega = \frac{r^2k}{18}\mathbbm{1},            \label{twist-h1}
    \end{equation}
    \begin{equation}
        h^{(r)}(n)\omega = 0  \ \ \  \text{for} \ \ \  n \in \mathbb{Z}_{>1},                          \label{twist-h2}
    \end{equation}
    \begin{equation}
        \Delta(h^{(r)}, z)\omega = \omega + z^{-1}h^{(r)} + z^{-2}\frac{r^2k}{36}\mathbbm{1},  \label{twist-h3}
    \end{equation}
    \begin{equation}
        Y_{\sigma^r}(h^{(r)}, z) = Y(h^{(r)}+\frac{r^2k}{18}z^{-1}, z),   \label{twist-h4}
    \end{equation}
    \begin{equation}
        Y_{\sigma^r}(h, z) = Y(h+\frac{rk}{3}z^{-1}, z),    \label{twist-h5}
    \end{equation}
    \begin{equation}
        Y_{\sigma^r}(e, z) =z^{\frac{r}{3}}Y(e, z),   \label{twist-e}
    \end{equation}
    \begin{equation}
        Y_{\sigma^r}(f, z) =z^{-\frac{r}{3}}Y(f, z).     \label{twist-f}
    \end{equation}
To distinguish the components of $Y(v,z)$ from those of $Y_{\sigma^r}(v,z)$, for fixed $r$, we denote the following expansions
    \[ Y_{\sigma^r}(v,z) = \sum_{n\in \frac{t}{3}+\mathbb{Z}}v_nz^{-n-1}, \quad  Y(v,z) = \sum_{n\in\mathbb{Z}}v(n)z^{-n-1}, \]
    where $v \in L(k, 0)$, $t \in \{0, 1, 2\}$ such that $\sigma^r(v) = e^{\frac{-2\pi \sqrt{-1}t}{3}}v$. 
    And we denote $L_n^{(r)}$ be the component operator of $Y_{\sigma^r}(\omega, z) = \sum_{n\in \mathbb{Z}}L_n^{(r)}z^{-n-2}$. Note that $L_n^{(0)} = L(n)$.  
    By (\ref{twist-h3})-(\ref{twist-f}), Lemma  \ref{two 0 in L(k,k)} and direct calculations, we have the following lemmas.

\begin{lem}  \label{lw a_kir}
    $L_0^{(r)}v^{i,i} = a_{k,i}^{(r)}v^{i,i}$, where $a_{k,i}^{(r)} = \frac{i(i+2)}{4(k+2)} + \frac{r^2k-6ir}{36}$ is the eigenvalue of the operator $L_0^{(r)}$ on $v^{i,i}$. Thus, for $r = 0, 1, 2$, $a_{k,i}^{(r)}$ is the conformal weight of the irreducible $\sigma^r$-twisted $L(k, 0)$-module $L(k,i)^{T_r}$. 
\end{lem}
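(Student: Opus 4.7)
The plan is to derive an explicit expression for the operator $L_0^{(r)}$ as an operator on the underlying vector space of $L(k,i)$ via the Li $\Delta$-operator construction, and then evaluate it on $v^{i,i}$. Since $Y_{\sigma^r}(\omega, z) = Y(\Delta(h^{(r)}, z)\omega, z)$ by construction of the twisted module, identity (\ref{twist-h3}) immediately gives
\[ Y_{\sigma^r}(\omega, z) = Y(\omega, z) + z^{-1} Y(h^{(r)}, z) + \frac{r^2 k}{36} z^{-2}. \]
Matching the coefficient of $z^{-2}$ in $Y_{\sigma^r}(\omega, z) = \sum_{n \in \Z} L_n^{(r)} z^{-n-2}$ yields the key operator identity $L_0^{(r)} = L(0) + h^{(r)}(0) + \frac{r^2 k}{36}$.

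Next I would apply this identity to $v^{i,i}$ term by term. Since $v^{i,i}$ sits in the lowest-weight space of the untwisted module $L(k,i)$, one has $L(0) v^{i,i} = \frac{i(i+2)}{4(k+2)} v^{i,i}$. The definition $h^{(r)} = \frac{r}{6} h$ combined with $h(0) v^{i,i} = (i - 2i) v^{i,i} = -i v^{i,i}$ yields $h^{(r)}(0) v^{i,i} = -\frac{ri}{6} v^{i,i}$. Summing the three contributions produces the eigenvalue
\[ L_0^{(r)} v^{i,i} = \left( \frac{i(i+2)}{4(k+2)} - \frac{ri}{6} + \frac{r^2 k}{36} \right) v^{i,i} = a_{k,i}^{(r)} v^{i,i}, \]
as claimed.

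For the concluding assertion that $a_{k,i}^{(r)}$ is the conformal weight of $L(k,i)^{T_r}$, I would verify that $v^{i,i}$ is a lowest $L_0^{(r)}$-weight vector in the twisted module. By (\ref{twist-h5})--(\ref{twist-f}), every positive-index mode of $h, e, f$ under $Y_{\sigma^r}$ acts on $v^{i,i}$ through an untwisted mode $a(n)$ with $a \in \{h, e, f\}$ and $n \geqslant 0$; these are either $n \geqslant 1$ modes (zero on $v^{i,i}$ by the given $\mathfrak{sl}_2$-action) or the zero mode $f(0)$ (also zero on $v^{i,i}$). Hence $v^{i,i}$ is annihilated by the positive part of the twisted affine algebra, so $a_{k,i}^{(r)}$ is the conformal weight. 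The calculation is essentially routine; the only nontrivial input is (\ref{twist-h3}), which follows from (\ref{twist-h1})--(\ref{twist-h2}) by noting that the exponential in $\Delta(h^{(r)}, z)$ truncates on $\omega$ because $h^{(r)}(n)\omega = 0$ for $n \geqslant 2$ and $h^{(r)}(1)^3 \omega = 0$.
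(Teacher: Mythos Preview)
Your proposal is correct and is precisely the ``direct calculation'' the paper alludes to: the paper does not write out a proof but simply states that the lemma follows from (\ref{twist-h3})--(\ref{twist-f}) together with straightforward computation, and your derivation of $L_0^{(r)} = L(0) + h^{(r)}(0) + \tfrac{r^2k}{36}$ and its evaluation on $v^{i,i}$ is exactly that computation. Your additional verification that $v^{i,i}$ is annihilated by the positive twisted modes (so that $a_{k,i}^{(r)}$ really is the conformal weight) is a welcome clarification that the paper leaves implicit.
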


\begin{lem}
    For $0\leqslant i\leqslant k$, write $L(k,i)^{T_1} = \oplus_{n \in \frac{1}{3}\Z_{\geqslant 0}}L(k,i)^{T_1}(n)$ as an admissible $\si$-twisted $L(k,0)$-module. Then 
    \begin{enumerate}
        \item   For  $i=0$, 
            \[L(k,i)^{T_1}(0)=\C \1, \ \ \ \  L(k,i)^{T_1}(\frac{1}{3}) = \C e_{-\frac{1}{3}}\1=0, \]
            \[L(k,i)^{T_1}(\frac{2}{3}) = \C f_{-\frac{2}{3}}\1=\C f(-1)\1, \ \ \ \ e_{-\frac{4}{3}}\1 = e(-1)\1 \in L(k,i)^{T_1}(\frac{4}{3}),     \]
            \[ L_0^{(1)}(\1)=\frac{k}{36}\1, \ \ \ \  L_0^{(1)}(e(-1)\1)=(\frac{k}{36}+\frac{4}{3})e(-1)\1,  \]
            \[ L_0^{(1)}(f(-1)\1)=(\frac{k}{36}+\frac{2}{3})f(-1)\1.   \]
        \item   For  $i=1$ and $k=1$, 
            \[L(k,i)^{T_1}(0)=\C v^{1,1} , \ \ \ \  L(k,i)^{T_1}(\frac{1}{3}) = \C e_{-\frac{1}{3}}v^{1,1}=\C v^{1,0}, \]
            \[L(k,i)^{T_1}(\frac{2}{3}) = \C e_{-\frac{1}{3}}^2v^{1,1} \oplus \C f_{-\frac{2}{3}}v^{1,1}=0,  \]
            \[ f_{-\frac{5}{3}}v^{1,1} = f(-2)v^{1,1} \in L(k,i)^{T_1}(\frac{5}{3}),    \]
            \[ L_0^{(1)}(v^{1,1})=\frac{1}{9}v^{1,1}, \ \ \ \  L_0^{(1)}(v^{1,0})=\frac{4}{9}v^{1,0}, \]
            \[ L_0^{(1)}(f(-2)v^{1,1})=\frac{16}{9}f(-2)v^{1,1}.   \]
        \item   For  $i=1$ and $k>1$, 
            \[ L(k,i)^{T_1}(0)=\C v^{1,1} , \ \ \ \  L(k,i)^{T_1}(\frac{1}{3}) = \C e_{-\frac{1}{3}}v^{1,1}=\C v^{1,0}, \]
            \[ L(k,i)^{T_1}(\frac{2}{3}) = \C e_{-\frac{1}{3}}^2v^{1,1} \oplus \C f_{-\frac{2}{3}}v^{1,1}=\C f(-1)v^{1,1},\]
            \[ L_0^{(1)}(v^{1,1})=(\frac{3}{4(k+2)}+\frac{k-6}{36})v^{1,1}, \]
            \[ L_0^{(1)}(v^{1,0})=(\frac{3}{4(k+2)}+\frac{k+6}{36})v^{1,0},  \] 
            \[ L_0^{(1)}(f(-1)v^{1,1})=(\frac{3}{4(k+2)}+\frac{k+18}{36})f(-1)v^{1,1}.  \]  
        \item   For  $1<i\leqslant k$, 
            \[ L(k,i)^{T_1}(0)=\C v^{i,i} , \ \ \ \  L(k,i)^{T_1}(\frac{1}{3}) = \C e_{-\frac{1}{3}}v^{i,i}=\C v^{i,i-1}, \]
            \[ L(k,i)^{T_1}(\frac{2}{3}) = \C f_{-\frac{2}{3}}v^{i,i} \oplus \C e_{-\frac{1}{3}}v^{i,i-1}=\C f(-1)v^{i,i} \oplus \C v^{i,i-2},\]
            \[ L_0^{(1)}(v^{i,i})=(\frac{i(i+2)}{4(k+2)}+\frac{k-6i}{36})v^{i,i}, \]
            \[ L_0^{(1)}(v^{i,i-1})=(\frac{i(i+2)}{4(k+2)}+\frac{k-6i+12}{36})v^{i,i-1},  \] 
            \[ L_0^{(1)}(v^{i,i-2})=(\frac{i(i+2)}{4(k+2)}+\frac{k-6i+24}{36})v^{i,i-2}.  \]  

    \end{enumerate}
\end{lem}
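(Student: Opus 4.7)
The strategy uses Lemma 3.8, which realizes $L(k,i)^{T_1}$ as the same underlying vector space $L(k,i)$ equipped with the twisted vertex operator $Y(\Delta(h^{(1)},z)\,\cdot\,,z)$. Every quantity in the statement of the lemma—the lowest weight, the graded pieces, and the $L_0^{(1)}$-eigenvalues—can then be read off from the identities (3.12)–(3.18) together with the known $\widehat{\mathfrak{sl}_2}$-action on $L(k,i)$.

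The crucial input is the operator formula for $L_0^{(1)}$. Extracting the coefficient of $z^{-2}$ in $Y_{\sigma}(\omega,z)=Y(\Delta(h^{(1)},z)\omega,z)$ via (3.12) (and using $h^{(1)}(0)=\tfrac{1}{6}h(0)$) gives
\[
L_0^{(1)} \;=\; L(0) + h^{(1)}(0) + \tfrac{k}{36}\cdot \mathrm{id} \;=\; L(0) + \tfrac{1}{6}h(0) + \tfrac{k}{36}\cdot \mathrm{id}.
\]
Next, expanding (3.15)–(3.18) yields the mode identifications $e_m = e(m+\tfrac{1}{3})$, $f_m = f(m-\tfrac{1}{3})$, $h_n = h(n)$ for $n\ne 0$ and $h_0 = h(0)+\tfrac{k}{3}$. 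On the lowest $L(0)$-weight space of $L(k,i)$ the formula above computes $L_0^{(1)}v^{i,j}=\bigl(\tfrac{i(i+2)}{4(k+2)}+\tfrac{i-2j}{6}+\tfrac{k}{36}\bigr)v^{i,j}$, which is minimised at $j=i$. Hence $L(k,i)^{T_1}(0)=\C v^{i,i}$ (respectively $\C\1$ when $i=0$), and the minimal eigenvalue reproduces the conformal weight $a_{k,i}^{(1)}$ of Lemma 3.9.

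With $v^{i,i}$ (or $\1$) fixed as the lowest weight vector, I would next enumerate spanning sets of $L(k,i)^{T_1}(n)$ at $n=\tfrac{1}{3},\tfrac{2}{3},\ldots$ by applying the low-degree modes $e_{-1/3}=e(0)$, $f_{-2/3}=f(-1)$, $e_{-4/3}=e(-1)$, $f_{-5/3}=f(-2)$, $h_{-1}=h(-1)$ to the lowest weight vector, and invoke the explicit $\widehat{\mathfrak{sl}_2}$-action on $L(k,i)$ together with Lemma 3.3 (which states $e(-1)v^{k,0}=f(-1)v^{k,k}=0$ inside $L(k,k)$) to decide which combinations vanish. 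The $L_0^{(1)}$-eigenvalue on each survivor is obtained by substituting its $L(0)$-weight and $h(0)$-weight into the formula above; agreement with the declared degree is automatic, because an eigenvector with $L_0^{(1)}$-eigenvalue $a_{k,i}^{(1)}+n$ must lie in $L(k,i)^{T_1}(n)$ by the definition of the admissible grading.

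The main obstacle is the case analysis at degree $2/3$, where Lemma 3.3 is essential. For $i=k=1$ both $f_{-2/3}v^{1,1}=f(-1)v^{1,1}$ and $e_{-1/3}^{\,2}v^{1,1}=e(0)v^{1,0}$ vanish, collapsing $L(1,1)^{T_1}(2/3)$ to zero and forcing the next nontrivial piece to appear only at degree $5/3$ via $f_{-5/3}v^{1,1}=f(-2)v^{1,1}$; for $i=1$ with $k>1$ the vector $e(0)v^{1,0}$ still vanishes but $f(-1)v^{1,1}$ now survives; for $1<i\le k$ the additional vector $v^{i,i-2}$ enters as $\tfrac{1}{2}e_{-1/3}v^{i,i-1}$ and is linearly independent of $f(-1)v^{i,i}$ (they live in different $h(0)$-eigenspaces of $L(k,i)$), the latter dropping out precisely when $i=k$ by Lemma 3.3. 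Tracking these degenerations and systematically applying the eigenvalue formula to each listed vector yields the lemma.
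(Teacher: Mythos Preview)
Your approach is exactly what the paper does: it states only ``By (\ref{twist-h3})--(\ref{twist-f}), Lemma~\ref{two 0 in L(k,k)} and direct calculations,'' and your proposal is a faithful (and considerably more detailed) execution of precisely that computation---deriving $L_0^{(1)}=L(0)+\tfrac{1}{6}h(0)+\tfrac{k}{36}$ from $\Delta(h^{(1)},z)\omega$, translating twisted modes to untwisted ones via $Y_{\sigma}(e,z)=z^{1/3}Y(e,z)$ and $Y_{\sigma}(f,z)=z^{-1/3}Y(f,z)$, and invoking $e(-1)v^{k,0}=f(-1)v^{k,k}=0$ for the degenerations. Your cross-reference numbering is slightly off (the vanishing lemma is Lemma~3.2, the $\Delta$-realization is Lemma~3.6, and the conformal-weight lemma is Lemma~3.7), but the mathematical content is correct and matches the paper.
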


\begin{lem}
    For $0\leqslant i\leqslant k$, write $L(k,i)^{T_2} = \oplus_{n \in \frac{1}{3}\Z_{\geqslant 0}}L(k,i)^{T_2}(n)$ as an admissible $\si^2$-twisted $L(k,0)$-module. Then 
    \begin{enumerate}
    	\item   For  $i=0$ and $k=1$, 
            \[L(k,i)^{T_2}(0)=\C \1, \ \ \ \  L(k,i)^{T_2}(\frac{1}{3}) = \C f_{-\frac{1}{3}}\1=\C f(-1)\1, \]
            \[L(k,i)^{T_2}(\frac{2}{3}) = \C e_{-\frac{2}{3}}\1 \oplus \C f_{-\frac{1}{3}}^2\1 = 0,  \]
            \[e_{-\frac{5}{3}}\1 = e(-1)\1 \in L(k,i)^{T_2}(\frac{5}{3}),    \]
            \[ L_0^{(2)}(\1)=\frac{1}{9}\1, \ \ \ \  L_0^{(2)}(f(-1)\1)=\frac{4}{9}f(-1)\1,  \]
            \[ L_0^{(2)}(e(-1)\1)=\frac{16}{9}e(-1)\1.   \]
        \item   For  $i=0$ and $k>1$, 
            \[L(k,i)^{T_2}(0)=\C \1, \ \ \ \  L(k,i)^{T_2}(\frac{1}{3}) = \C f_{-\frac{1}{3}}\1=\C f(-1)\1, \]
            \[L(k,i)^{T_2}(\frac{2}{3}) = \C e_{-\frac{2}{3}}\1 \oplus \C f_{-\frac{1}{3}}^2\1 = \C f(-1)^2\1,  \]
            \[ L_0^{(2)}(\1)=\frac{k}{9}\1, \ \ \ \  L_0^{(2)}(f(-1)\1)=(\frac{k}{9}+\frac{1}{3})f(-1)\1,  \]
            \[ L_0^{(2)}(f(-1)^2\1)=(\frac{k}{9}+\frac{2}{3})f(-1)^2\1.   \]
        \item   For  $i=1$ and $k=1$, 
            \[L(k,i)^{T_2}(0)=\C v^{1,1}, \ \ \ \ L(k,i)^{T_2}(\frac{1}{3})=\C f_{-\frac{1}{3}}v^{1,1}=\C f(-1)v^{1,1}= 0,\]
            \[L(k,i)^{T_2}(\frac{2}{3}) = \C e_{-\frac{2}{3}}v^{1,1} \oplus \C f_{-\frac{1}{3}}^2v^{1,1} = \C v^{1,0}, \] 
            \[f_{-\frac{4}{3}}v^{1,1}=f(-2)v^{1,1} \in L(k,i)^{T_2}(\frac{4}{3})    \]
            \[ L_0^{(2)}(v^{1,1})=\frac{1}{36}v^{1,1}, \ \ \ \  L_0^{(2)}(v^{1,0})=\frac{25}{36}v^{1,0}, \]
            \[ L_0^{(2)}(f(-2)v^{1,1})=\frac{49}{36}f(-2)v^{1,1}.   \]
        \item   For  $i=1$ and $k>1$, 
            \[ L(k,i)^{T_2}(0)=\C v^{1,1}, \ \ \ \  L(k,i)^{T_2}(\frac{1}{3})=\C f_{-\frac{1}{3}}v^{1,1}=\C f(-1)v^{1,1},\]
            \[ L(k,i)^{T_2}(\frac{2}{3}) = \C e_{-\frac{2}{3}}v^{1,1} \oplus \C f_{-\frac{1}{3}}^2v^{1,1}=\C v^{1,0} \oplus \C f(-1)^2v^{1,1},\]
            \[ L_0^{(2)}(v^{1,1})=(\frac{3}{4(k+2)}+\frac{k-3}{9})v^{1,1}, \]        
            \[ L_0^{(2)}(f(-1)v^{1,1})=(\frac{3}{4(k+2)}+\frac{k}{9})f(-1)v^{1,1},  \]
            \[ L_0^{(2)}(v^{1,0})=(\frac{3}{4(k+2)}+\frac{k+3}{9})v^{1,0}.  \]   
        \item   For  $1<i<k$, 
            \[ L(k,i)^{T_2}(0)=\C v^{i,i}, \ \ \ \ L(k,i)^{T_2}(\frac{1}{3})=\C f_{-\frac{1}{3}}v^{i,i}=\C f(-1)v^{i,i}, \]
            \[ e_{-\frac{2}{3}}v^{i,i} = v^{i,i-1} \in L(k,i)^{T_2}(\frac{2}{3}), \]
            \[ L_0^{(2)}(v^{i,i})=(\frac{i(i+2)}{4(k+2)}+\frac{k-3i}{9})v^{i,i}, \]
            \[ L_0^{(2)}(f(-1)v^{i,i})=(\frac{i(i+2)}{4(k+2)}+\frac{k-3i+3}{9})f(-1)v^{i,i},  \] 
            \[ L_0^{(2)}(v^{i,i-1})=(\frac{i(i+2)}{4(k+2)}+\frac{k-3i+6}{9})v^{i,i-1}.  \]
        \item   For  $1<i=k$, 
            \[ L(k,i)^{T_2}(0)=\C v^{k,k}, \ \ \ \  L(k,i)^{T_2}(\frac{1}{3}) = \C f_{-\frac{1}{3}}v^{k,k}=0, \]
            \[ L(k,i)^{T_2}(\frac{2}{3}) = \C e_{-\frac{2}{3}}v^{k,k} \oplus \C f_{-\frac{1}{3}}^2v^{k,k}=\C v^{k,k-1},\]
            \[ f_{-\frac{4}{3}}v^{k,k} = f(-2)v^{k,k} \in L(k,i)^{T_2}(\frac{4}{3}), \]
            \[ L_0^{(2)}(v^{k,k})=(\frac{k}{36})v^{k,k}, \ \ \ \ L_0^{(2)}(v^{k,k-1})=(\frac{k}{36}+\frac{2}{3})v^{k,k-1},\]
            \[ L_0^{(2)}(f(-2)v^{k,k})=(\frac{k}{36}+\frac{4}{3})f(-2)v^{k,k}.  \] 
    \end{enumerate}
\end{lem}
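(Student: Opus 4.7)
The plan is to exploit the explicit realization $L(k,i)^{T_2} = \bigl(L(k,i),\, Y(\Delta(h^{(2)},z)\cdot,z)\bigr)$ and the formulas $(\ref{twist-h1})$--$(\ref{twist-f})$ with $r=2$. From $Y_{\sigma^2}(e,z) = z^{2/3}Y(e,z)$ and $Y_{\sigma^2}(f,z) = z^{-2/3}Y(f,z)$ one reads off $e_{-2/3}=e(0)$, $e_{-5/3}=e(-1)$, $f_{-1/3}=f(-1)$, $f_{2/3}=f(0)$, and $f_{-4/3}=f(-2)$, while a short calculation with $\Delta(h^{(2)},z)\omega$ using $(\ref{twist-h3})$ yields $L_0^{(2)} = L(0) + \tfrac{1}{3}h(0) + \tfrac{k}{9}$. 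Combined with $L(0)v^{i,j} = \tfrac{i(i+2)}{4(k+2)}v^{i,j}$ and $h(0)v^{i,j} = (i-2j)v^{i,j}$, this produces the master formula $L_0^{(2)}v^{i,j} = \bigl(\tfrac{i(i+2)}{4(k+2)} + \tfrac{i-2j}{3} + \tfrac{k}{9}\bigr)v^{i,j}$, from which all the $L_0^{(2)}$-values displayed in the lemma follow by direct substitution (noting that $f(-n)$ adds $n$ to the $L(0)$-weight and subtracts $2$ from the $h(0)$-weight).

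Identifying the admissible subspaces $L(k,i)^{T_2}(n)$ for small $n$ then reduces to enumerating vectors in $L(k,i)$ whose combined $(L(0),h(0))$-eigenvalue equals $a_{k,i}^{(2)}+n$. Lemma~\ref{lw a_kir} gives $v^{i,i}$ as the lowest weight vector, so $L(k,i)^{T_2}(0) = \C v^{i,i}$. Since all $h(0)$-weights on $L(k,i)$ are congruent to $i \bmod 2$, parity eliminates most candidates. For $n=1/3$ only $f_{-1/3}v^{i,i} = f(-1)v^{i,i}$ survives; for $n=2/3$ the candidates are $e_{-2/3}v^{i,i} = v^{i,i-1}$ (when $i\geqslant 1$) at $L(0)$-level $\tfrac{i(i+2)}{4(k+2)}$, together with $f_{-1/3}^{\,2}v^{i,i} = f(-1)^2 v^{i,i}$ at level $\tfrac{i(i+2)}{4(k+2)}+2$; vectors at deeper levels are ruled out because the required $h(0)$-weight drops below the $\mathfrak{sl}_2$-weight achievable in $L(k,i)$.

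The rest is case-by-case verification of which candidates vanish. The equalities $e(0)\1=0$ and $f(0)v^{i,i}=0$ are standard highest/lowest-weight conditions. The exceptional vanishings come from Lemma~\ref{two 0 in L(k,k)}: $f(-1)v^{1,1}=0$ in $L(1,1)$ handles case 3 and forces $f(-1)^2 v^{1,1}=0$, while $f(-1)v^{k,k}=0$ in $L(k,k)$ handles case 6 and forces $f(-1)^2 v^{k,k}=0$. For case 1 ($i=0,\,k=1$), the vanishing $f(-1)^2\1 = 0$ in $L(1,0)$ follows from applying the Chevalley involution (which fixes $\1$ and swaps $e\leftrightarrow -f$) to the standard integrable null vector $e(-1)^{k+1}\1 = 0$, giving $f(-1)^{k+1}\1 = 0$ in $L(k,0)$ for all $k\geqslant 1$. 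I expect this null-vector bookkeeping, rather than the eigenvalue arithmetic, to be the main source of effort; once these input vanishings are in place, each of the six cases follows by a short substitution into the master formula.
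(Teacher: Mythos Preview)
Your proposal is correct and follows essentially the same approach as the paper, which simply invokes formulas (\ref{twist-h3})--(\ref{twist-f}), Lemma~\ref{two 0 in L(k,k)}, and ``direct calculations'' without further detail. Your write-up supplies precisely those direct calculations: the identification of the twisted modes with untwisted ones, the master formula $L_0^{(2)} = L(0) + \tfrac{1}{3}h(0) + \tfrac{k}{9}$, and the weight-space enumeration bounding each graded piece; the only ingredient you add beyond what the paper names explicitly is the Chevalley-involution argument for $f(-1)^{k+1}\mathbbm{1}=0$, which is a clean way to obtain the vanishing $f(-1)^2\mathbbm{1}=0$ in $L(1,0)$ that the paper evidently takes for granted.
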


Now we are poised to give the classification of the irreducible $L(k,0)^{\Z_3}$-modules coming from $\si^r$-twisted $L(k,0)$-modules $L(k,i)^{T_r} (0 \leqslant i \leqslant k, r = 1, 2)$. Note that $v^{0,0}=\1$. Set
    
    \[u_{k,i}^{T_1,0} = v^{i,i} \in L(k,i)^{T_1}(0), \ \ \ \ \ \ \ \ \   0 \leqslant i \leqslant k   \]

    \[u_{k,i}^{T_1,1} = \begin{cases}
                            e(-1)\1 \in L(k,0)^{T_1}(\frac{4}{3}),    &  \ \   i=0, k \geqslant 1    \\
                            v^{i,i-1} \in L(k,i)^{T_1}(\frac{1}{3}),  &  \ \   0 < i \leqslant k
                      \end{cases}  \]

    \[u_{k,i}^{T_1,2} = \begin{cases}
                            f(-1)\1 \in L(k,0)^{T_1}(\frac{2}{3}),         &    i=0, k \geqslant 1     \\
                            f(-2)v^{1,1} \in L(1,1)^{T_1}(\frac{5}{3}),    &    i=1, k=1               \\
                            f(-1)v^{1,1} \in L(k,1)^{T_1}(\frac{2}{3}),    &    i=1, k>1               \\
                            v^{i,i-2} \in L(k,i)^{T_1}(\frac{2}{3}),       &    1 < i \leqslant k
                      \end{cases} \]

    \[u_{k,i}^{T_2,0} = v^{i,i} \in L(k,i)^{T_2}(0), \ \ \ \ \ \ \ \ \ \ \ \   0 \leqslant i \leqslant k   \]

    \[u_{k,i}^{T_2,1} = \begin{cases}
                            f(-1)v^{i,i} \in L(k,i)^{T_2}(\frac{1}{3}),    &    0 \leqslant i < k    \\
                            f(-2)v^{k,k} \in L(k,k)^{T_2}(\frac{4}{3}),    &    i = k
                      \end{cases} \]   
    
    \[ u_{k,i}^{T_2,2} = \begin{cases}
                            e(-1)\1 \in L(1,0)^{T_2}(\frac{5}{3}),         &  \ \ \   i=0, k = 1     \\
                            f(-1)^2\1 \in L(k,0)^{T_2}(\frac{2}{3}),       &  \ \ \   i=0, k>1               \\
                            v^{i,i-1} \in L(k,i)^{T_2}(\frac{2}{3}),       &  \ \ \   1 \leqslant i \leqslant k.
                      \end{cases} \]

Then we have the following results.

\begin{lem}
Let $L(k,i)^{T_r,j}$ be the $L(k,0)^{\Z_3}$-modules generated by $u_{k,i}^{T_r,j}$, where $k \in \Z_{\geqslant 1}$, $0 \leqslant i \leqslant k$, $r = 1, 2$, $j = 0, 1, 2$. Then $L(k,i)^{T_r,j}$ $(k \in \Z_{\geqslant 1}$, $0 \leqslant i \leqslant k$, $r = 1, 2$, $j = 0, 1, 2)$ are irreducible $L(k,0)^{\Z_3}$-modules. 
\end{lem}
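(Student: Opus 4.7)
The strategy is to apply the decomposition formula (\ref{any irr twist mod decomp}) together with Theorem \ref{DRX17 thm1} to each irreducible $\si^r$-twisted $L(k,0)$-module $L(k,i)^{T_r}$. The first step is to establish that the stabilizer satisfies $(\Z_3)_{L(k,i)^{T_r}} = \Z_3$. Because $\Z_3$ is abelian, $L(k,i)^{T_r}\circ \tau$ is again a $\si^r$-twisted module for every $\tau \in \Z_3$. Since $\si$ fixes the Cartan element $h$ of $\mathfrak{sl}_2(\C)$ and hence every $h^{(r)}$, the automorphism $\si$ commutes with the Li operator $\Delta(h^{(r)}, z)$ that defines the twisted vertex operator, so the $\widehat{\mathfrak{sl}_2}$-module automorphism of $L(k,i)$ induced by $\si$ intertwines the twisted actions and yields an isomorphism $L(k,i)^{T_r} \cong L(k,i)^{T_r} \circ \si$. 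Therefore each $L(k,i)^{T_r}$ is $\Z_3$-stable.

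Next, since $H^2(\Z_3, \C^*) = 0$, the associated cocycle $\alpha_{L(k,i)^{T_r}}$ is trivial, the projective representation $\phi$ is an honest representation of $\Z_3$, and the twisted group algebra $\C^{\alpha_{L(k,i)^{T_r}}}[\Z_3]$ is just $\C[\Z_3]$, with three inequivalent one-dimensional simple modules. Applying (\ref{any irr twist mod decomp}) and Theorem \ref{DRX17 thm1}, $L(k,i)^{T_r}$ decomposes as a direct sum of three pairwise inequivalent irreducible $L(k,0)^{\Z_3}$-submodules, one for each character of $\Z_3$. Moreover, because $\phi(\si^r)$ acts on $L(k,i)^{T_r}(n+s/3)$ as the scalar $e^{2\pi\sqrt{-1}s/3}$, these three isotypic components coincide with the three graded subspaces indexed by $s \in \{0, 1, 2\}$, and are thus distinguished purely by conformal weight modulo $\Z$.

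It then remains to identify each isotypic component with the cyclic submodule $L(k,i)^{T_r, j}$ generated by the prescribed vector $u_{k,i}^{T_r, j}$. Using the explicit low-weight data computed in the preceding two lemmas, I would verify in each case that $u_{k,i}^{T_r, j}$ is nonzero, lies in the appropriate graded piece (that is, has conformal weight in $\frac{s}{3} + \Z$ for the correct $s \in \{0, 1, 2\}$), and is a vector of minimal conformal weight within that piece. The submodule $L(k,i)^{T_r, j}$ is then a nonzero $L(k,0)^{\Z_3}$-submodule of the corresponding irreducible isotypic component, hence coincides with it and is itself irreducible.

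The main obstacle is the case analysis in this last identification: the prescription for $u_{k,i}^{T_r, j}$ genuinely depends on whether $i = 0$, $i = 1$, $1 < i < k$, or $i = k$, and sometimes also on whether $k = 1$ or $k > 1$. These branches arise because vanishing relations such as $e(-1)v^{k,0} = 0$ and $f(-1)v^{k,k} = 0$ from Lemma \ref{two 0 in L(k,k)} force the minimal conformal weight representative of a given graded sector to jump from the expected order-one mode to a higher mode like $f(-2)v^{k,k}$. Beyond this bookkeeping against the tabulated graded pieces, the argument is routine.
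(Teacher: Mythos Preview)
Your argument is correct and reaches the same conclusion as the paper by the same underlying mechanism, namely identifying the irreducible $L(k,0)^{\Z_3}$-submodules with the $\frac{1}{3}\Z$-graded pieces of the twisted module and then locating each generator $u_{k,i}^{T_r,j}$ inside the appropriate piece. The paper's proof is considerably shorter: it skips the stabilizer and cocycle verification entirely and simply invokes the fact, already recalled in the preliminaries from \cite{DM97} and \cite{DRX17}, that for an irreducible $g$-twisted $V$-module $M$ the components $M^s=\oplus_{n\in \frac{s}{T}+\Z_{\geqslant 0}}M(n)$ are irreducible $V^{\langle g\rangle}$-modules; since $\si^r$ generates $\Z_3$ for $r=1,2$, one has $V^{\langle \si^r\rangle}=L(k,0)^{\Z_3}$ and the result follows immediately.
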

\begin{proof}
Since we write $L(k,i)^{T_r} = \oplus_{n \in \frac{1}{3}\Z_{\geqslant 0}}L(k,i)^{T_r}(n)$ as an admissible $\si^r$-twisted $L(k,0)$-module, then $L(k,i)^{T_r, j}=\oplus_{n \in \frac{j}{3}+\Z}L(k,i)^{T_r}(n)$ is an irreducible $L(k,0)^{\Z_3}$-module for $j = 0, 1, 2$ \cite{DM97}.
\end{proof}

\begin{thm}  \label{twisted decomposition}
For any $0 \leqslant i \leqslant k$, $r = 1, 2$, we have the following inequivalent irreducible $L(k,0)^{\Z_3}$-module decomposition:
\begin{equation}
L(k,i)^{T_r} = \bigoplus_{j=0}^{2}L(k,i)^{T_r,j}.
\end{equation}
\end{thm}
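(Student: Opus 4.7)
The plan is to exploit the $\frac{1}{3}\mathbb{Z}_{\geqslant 0}$-grading of the admissible $\sigma^r$-twisted $L(k,0)$-module
\[ L(k,i)^{T_r} = \bigoplus_{n \in \frac{1}{3}\mathbb{Z}_{\geqslant 0}} L(k,i)^{T_r}(n). \]
Grouping the homogeneous components by residue modulo $\mathbb{Z}$ gives the vector-space decomposition
\[ L(k,i)^{T_r} = \bigoplus_{j=0}^{2} \Bigl( \bigoplus_{n \in \frac{j}{3}+\mathbb{Z}_{\geqslant 0}} L(k,i)^{T_r}(n) \Bigr) = \bigoplus_{j=0}^{2} L(k,i)^{T_r,j}, \]
where the second equality is exactly what the preceding lemma (via \cite{DM97}) supplies: each summand $\oplus_{n \in \frac{j}{3}+\mathbb{Z}_{\geqslant 0}} L(k,i)^{T_r}(n)$ is identified with the submodule generated by the lowest-weight vector $u_{k,i}^{T_r,j}$, and each of these is already known to be an irreducible $L(k,0)^{\mathbb{Z}_3}$-module. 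Hence the direct-sum decomposition asserted in the theorem is immediate as soon as we verify that the three summands are pairwise inequivalent.

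For the pairwise inequivalence, I would argue as follows. Since $\omega \in L(k,0)^{\mathbb{Z}_3}$, the operator $L_0^{(r)}$ (the component of $Y_{\sigma^r}(\omega, z)$) acts on any $L(k,0)^{\mathbb{Z}_3}$-submodule of $L(k,i)^{T_r}$. By Lemma \ref{lw a_kir} together with the admissible grading on $L(k,i)^{T_r}$, the $L_0^{(r)}$-eigenvalues on $L(k,i)^{T_r,j}$ lie in $a_{k,i}^{(r)} + \frac{j}{3} + \mathbb{Z}_{\geqslant 0}$. For $j = 0, 1, 2$ these three sets of rationals sit in three distinct cosets of $\mathbb{Z}$ in $\mathbb{Q}$, so they are disjoint, and hence the three irreducible pieces cannot be isomorphic as $L(k,0)^{\mathbb{Z}_3}$-modules.

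I do not expect substantial obstacles: the grading decomposition and the irreducibility of the three pieces are in hand from the preceding lemma, and the $L_0^{(r)}$-coset argument is elementary and uniform in $k$, $i$, $r$. The only mild bookkeeping is that the lowest-weight representatives $u_{k,i}^{T_r,j}$ sit in different graded degrees in the various exceptional small cases ($i=0$, $i=1$ with $k=1$, $i=k$, etc.), but this has already been absorbed into the case-by-case definition of the $u_{k,i}^{T_r,j}$, so no further work is needed at this step. One could alternatively phrase the inequivalence via Theorem \ref{DRX17 thm1}(3), first checking $(\mathbb{Z}_3)_{L(k,i)^{T_r}} = \mathbb{Z}_3$ and noting that $\phi(\sigma^r)$ acts as the scalar $e^{2\pi\sqrt{-1}j/3}$ on $L(k,i)^{T_r,j}$; but the $L_0^{(r)}$-coset argument is more direct and avoids having to compare conformal weights of different twisted modules (which, as quick calculation shows, need not even be pairwise distinct across $i$).
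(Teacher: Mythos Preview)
Your proposal is correct. The decomposition and the irreducibility of the three summands are handled exactly as in the paper, by invoking the preceding lemma (which in turn rests on the $\frac{1}{3}\Z_{\geqslant 0}$-grading and \cite{DM97}).

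Where you differ from the paper is in the inequivalence step. The paper argues abstractly: it observes that $\sigma^r \in (\Z_3)_{L(k,i)^{T_r}}$, hence $(\Z_3)_{L(k,i)^{T_r}} = \Z_3$, and then appeals to the general orbifold machinery, namely (\ref{any irr twist mod decomp}) together with Theorem~\ref{DRX17 thm1} (and Theorem~\ref{DRX17 thm2}), to conclude that the three pieces are inequivalent. You instead give a direct, elementary argument: since $\omega \in L(k,0)^{\Z_3}$, the operator $L_0^{(r)}$ is an $L(k,0)^{\Z_3}$-module operator, and its spectrum on $L(k,i)^{T_r,j}$ lies in $a_{k,i}^{(r)} + \tfrac{j}{3} + \Z$, which for $j=0,1,2$ are three disjoint $\Z$-cosets in $\mathbb{Q}$. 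This bypasses the need to compute the stabilizer $(\Z_3)_{L(k,i)^{T_r}}$ and to cite Theorem~\ref{DRX17 thm1}(3), at the cost of being slightly less aligned with the general framework used elsewhere in the paper. You already flag the paper's route as an alternative at the end of your proposal, so you are aware of both; either is perfectly adequate here.
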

\begin{proof}
For $r= 1, 2$, a basic fact is that $\si^r$ belongs to ${(\Z_3)}_{L(k,i)^{T_r}}$, thus ${(\Z_3)}_{L(k,i)^{T_r}} = \Z_3$ for any $0 \leqslant i \leqslant k$. Then the theorem follows from (\ref{any irr twist mod decomp}), Theorem \ref{DRX17 thm1} and Theorem \ref{DRX17 thm2}.
\end{proof}

We are now in a position to state the main result of this section.

\begin{thm}
    There are exactly $9(k+1)$ irreducible $L(k,0)^{\Z_3}$-modules up to isomorphism. We give these irreducible $L(k,0)^{\Z_3}$-modules with their conformal weights by Table \ref{Table 1} and Table \ref{Table 2}.
       \begin{table}[h]   
            \caption{$k =1$} \label{Table 1}
            \small
            \renewcommand\arraystretch{1.5}
            \begin{center}
                 \begin{tabular}{|c|c|c|c|c|c|c|}
                 \hline
                             & $L(1,0)^0$ & $L(1,0)^1$ & $L(1,0)^2$ & $L(1,1)^0$ & $L(1,1)^1$ & $L(1,1)^2$  \\
                 \hline
                 $\omega$     &   $0$ &   $1$    &   $1$ &  $\frac{1}{4}$ & $\frac{1}{4}$ & $\frac{9}{4}$   \\
                 \hline
                             & $L(1,0)^{T_1,0}$ & $L(1,0)^{T_1,1}$ & $L(1,0)^{T_1,2}$ & $L(1,1)^{T_1,0}$ & $L(1,1)^{T_1,1}$ & $L(1,1)^{T_1,2}$  \\
                 \hline
                 $\omega$     &  $\frac{1}{36}$ &   $\frac{49}{36}$    &   $\frac{25}{36}$ &  $\frac{1}{9}$ & $\frac{4}{9}$ & $\frac{16}{9}$   \\
                 \hline
                             & $L(1,0)^{T_2,0}$ & $L(1,0)^{T_2,1}$ & $L(1,0)^{T_2,2}$ & $L(1,1)^{T_2,0}$ & $L(1,1)^{T_2,1}$ & $L(1,1)^{T_2,2}$  \\
                 \hline
                 $\omega$    &  $\frac{1}{9}$ & $\frac{4}{9}$ &  $\frac{16}{9}$   &  $\frac{1}{36}$ &   $\frac{49}{36}$    &   $\frac{25}{36}$  \\
                 \hline
                 \end{tabular}
            \end{center}
        \end{table}  
        \begin{table}[h]
            \caption{$k > 1$} \label{Table 2}           
            \small
            \renewcommand\arraystretch{1.6}
            \begin{center}
                 \begin{tabular}{|c|c|c|c|}
                 \hline
                    $i=0$    & $L(k,0)^0$ & $L(k,0)^1$ & $L(k,0)^2$   \\
                 \hline
                 $\omega$    &   $0$      &   $1$      &   $1$    \\
                 \hline
                    $i=1$    & $L(k,1)^0$ & $L(k,1)^1$ & $L(k,1)^2$ \\
                 \hline
                 $\omega$    &  $\frac{3}{4(k+2)}$  & $\frac{3}{4(k+2)}$ & $\frac{4k+11}{4(k+2)}$   \\
                 \hline
                 $1<i\leqslant k$   & $L(k,i)^0$ & $L(k,i)^1$ & $L(k,i)^2$  \\
                 \hline
                 $\omega$    &  $\frac{i(i+2)}{4(k+2)}$ &   $\frac{i(i+2)}{4(k+2)}$  &   $\frac{i(i+2)}{4(k+2)}$   \\
                 \hline
                    $i=0$    & $L(k,0)^{T_1,0}$ & $L(k,0)^{T_1,1}$ & $L(k,0)^{T_1,2}$ \\
                 \hline
                 $\omega$    &  $\frac{k}{36}$ & $\frac{k+48}{36}$ & $\frac{k+24}{36}$   \\
                 \hline
                 $0<i\leqslant k$ & $L(k,i)^{T_1,0}$ &   $L(k,i)^{T_1,1}$ &   $L(k,i)^{T_1,2}$  \\
                 \hline
                 $\omega$    &  $\frac{i(i+2)}{4(k+2)}+\frac{k-6i}{36}$ &   $\frac{i(i+2)}{4(k+2)}+\frac{k-6i+12}{36}$ &               $\frac{i(i+2)}{4(k+2)}+\frac{k-6i+24}{36}$   \\
                 \hline
                 $0 \leqslant i< k$ & $L(k,i)^{T_2,0}$ &   $L(k,i)^{T_2,1}$ &   $L(k,i)^{T_2,2}$  \\
                 \hline
                 $\omega$    &  $\frac{i(i+2)}{4(k+2)}+\frac{k-3i}{9}$ &   $\frac{i(i+2)}{4(k+2)}+\frac{k-3i+3}{9}$ &               $\frac{i(i+2)}{4(k+2)}+\frac{k-3i+6}{9}$   \\
                 \hline
                    $i=k$     & $L(k,k)^{T_2,0}$ & $L(k,k)^{T_2,1}$ & $L(k,k)^{T_2,2}$  \\
                 \hline
                 $\omega$    &  $\frac{k}{36}$ & $\frac{k+48}{36}$ & $\frac{k+24}{36}$   \\
                 \hline
                 \end{tabular}
            \end{center}
        \end{table}
\end{thm}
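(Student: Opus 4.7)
The plan is to assemble this classification by combining the three strands already developed in the paper: (i) $L(k,0)^{\Z_3}$ is regular and selfdual of CFT type (Theorem~\ref{VG prop}, since $\Z_3$ is solvable), (ii) the classification of the irreducible $\sigma^r$-twisted $L(k,0)$-modules for each $r\in\{0,1,2\}$, and (iii) the decomposition of each such twisted module into inequivalent irreducible $L(k,0)^{\Z_3}$-modules. By Theorem~\ref{DRX17 thm3}, every irreducible $L(k,0)^{\Z_3}$-module arises as some $M_\lambda$ with $M$ an irreducible $\sigma^r$-twisted $L(k,0)$-module for some $r\in\{0,1,2\}$. The complete list of such twisted modules is $\{L(k,i)^{T_r}:0\leqslant i\leqslant k,\ r=0,1,2\}$, giving $3(k+1)$ irreducible twisted modules.

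The next step is to invoke Theorems~\ref{V decomposition} and \ref{irr decomposition} (for $r=0$) together with Theorem~\ref{twisted decomposition} (for $r=1,2$), each of which exhibits $L(k,i)^{T_r}=\bigoplus_{j=0}^{2}L(k,i)^{T_r,j}$ as a direct sum of three pairwise inequivalent irreducible $L(k,0)^{\Z_3}$-modules. This produces $3\cdot 3(k+1)=9(k+1)$ irreducible $L(k,0)^{\Z_3}$-modules. Pairwise inequivalence across different choices of $(i,r)$ follows from Theorem~\ref{DRX17 thm2}: since $\Z_3$ is abelian, the conjugation action on twisted modules is trivial, so a $\sigma^r$-twisted module remains $\sigma^r$-twisted under the $\Z_3$-action; combined with Lemma~\ref{irr Z3_M subgroup} for $r=0$ and the observation in the proof of Theorem~\ref{twisted decomposition} that $\sigma^r\in(\Z_3)_{L(k,i)^{T_r}}$ forces $(\Z_3)_{L(k,i)^{T_r}}=\Z_3$ for $r=1,2$, every $\Z_3$-orbit is a singleton, so distinct $(i,r)$ give distinct orbits and hence inequivalent $V^G$-submodules. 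Together with exhaustiveness from Theorem~\ref{DRX17 thm3}, the count is exactly $9(k+1)$.

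What remains is the tabulation of the conformal weights. For the untwisted summands ($r=0$), the weights are read directly from Theorem~\ref{V decomposition} and Theorem~\ref{irr decomposition}. For $r=1,2$, the conformal weight of $L(k,i)^{T_r,j}$ is the eigenvalue of $L_0^{(r)}$ on the lowest-weight vector $u_{k,i}^{T_r,j}$ identified in the displayed list. These eigenvalues are precisely what the lemmas following (\ref{twist-f}) and Lemma~\ref{lw a_kir} compute: the value on $v^{i,i}$ is $a_{k,i}^{(r)}=\frac{i(i+2)}{4(k+2)}+\frac{r^2k-6ir}{36}$, and each degree-$n/3$ shift adds $n/3$. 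The only subtle point is the bookkeeping for the edge cases where naive candidates for $u_{k,i}^{T_r,j}$ vanish---notably $e(-1)v^{k,0}=0$ and $f(-1)v^{k,k}=0$ by Lemma~\ref{two 0 in L(k,k)}, and the vanishings in $L(1,1)$ and at $i=0$ or $i=k$---which force the replacement by deeper vectors ($f(-2)v^{1,1}$, $f(-2)v^{k,k}$, $f(-1)^2\1$, etc.) and therefore a larger degree shift in the weight. This case-splitting is the only real obstacle, but since each lemma has already specified the correct lowest-weight vector and its $L_0^{(r)}$-eigenvalue, the entries of Table~\ref{Table 1} and Table~\ref{Table 2} are obtained by direct substitution.
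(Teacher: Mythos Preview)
Your proposal is correct and follows essentially the same approach as the paper's own proof, which simply cites Theorem~\ref{DRX17 thm3} for exhaustiveness and then Theorems~\ref{V decomposition}, \ref{irr decomposition}, and \ref{twisted decomposition} for the decompositions and weights. You have filled in more of the logic explicitly---in particular the orbit-singleton argument needed to apply Theorem~\ref{DRX17 thm2} across different $(i,r)$---but the structure and ingredients are identical.
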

\begin{proof}
    It follows from the Theorem \ref{DRX17 thm3} that all the irreducible modules of the orbifold vertex operator algebra $L(k,0)^{\Z_3}$ come from $\{L(k,i), L(k,i)^{T_r} | r = 1, 2, 0 \leqslant i \leqslant k\}$. Then the theorem follows from Theorem \ref{V decomposition}, Theorem \ref{irr decomposition} and Theorem \ref{twisted decomposition}. 
    For the case of $k=1$, the lowest weight vectors of $L(1,i)^{T_1,j} (i=0, 1, j=0, 1, 2)$ with their lowest weights have been given in \cite{DJ13}.
\end{proof}

\begin{rmk} \label{k=1 V^G iso to V_L}
    For $k=1$, the orbifold vertex operator algebra $L(1,0)^{\Z_3}$ can be realized as the lattice vertex operator algebra $V_{\Z\be}$ associated to the positive definite even lattice $\Z\be$ with $(\be, \be) = 18$ \cite{DJJJY15}. Moreover, it is well known that there are $18$ inequivalent irreducible $V_{\Z\be}$-modules: \{$V_{\Z\be+\frac{s}{18}\be}| 0 \leqslant s < 18$\} \cite{B86}, \cite{FLM88}. 
    Therefore, from \cite{DJJJY15} together with the Proposition 2.15 in \cite{DLM96-1}, we have the following $L(1,0)^{\Z_3}$-module isomorphisms:

    \[L(1,0) \cong V_{\Z\be} \oplus V_{\Z\be+\frac{6}{18}\be} \oplus V_{\Z\be+\frac{12}{18}\be},\]
    \[L(1,0)^0 \cong V_{\Z\be}, \ \ \ \ L(1,0)^1 \cong V_{\Z\be+\frac{6}{18}\be}, \ \ \ \ L(1,0)^2 \cong V_{\Z\be+\frac{12}{18}\be},\]

    \[L(1,0)^{T_1} \cong V_{\Z\be+\frac{1}{18}\be} \oplus V_{\Z\be+\frac{7}{18}\be} \oplus V_{\Z\be+\frac{13}{18}\be},\]
    \[L(1,0)^{T_1,0} \cong V_{\Z\be+\frac{1}{18}\be}, \ \ \ \ L(1,0)^{T_1,1} \cong V_{\Z\be+\frac{7}{18}\be}, \ \ \ \ L(1,0)^{T_1,2} \cong V_{\Z\be+\frac{13}{18}\be},\]

    \[L(1,0)^{T_2} \cong V_{\Z\be+\frac{2}{18}\be} \oplus V_{\Z\be+\frac{8}{18}\be} \oplus V_{\Z\be+\frac{14}{18}\be},\]
    \[L(1,0)^{T_2,0} \cong V_{\Z\be+\frac{2}{18}\be}, \ \ \ \ L(1,0)^{T_2,1} \cong V_{\Z\be+\frac{14}{18}\be}, \ \ \ \ L(1,0)^{T_2,2} \cong V_{\Z\be+\frac{8}{18}\be},\]

    \[L(1,1) \cong V_{\Z\be+\frac{3}{18}\be} \oplus V_{\Z\be+\frac{9}{18}\be} \oplus V_{\Z\be+\frac{15}{18}\be},\]
    \[L(1,1)^0 \cong V_{\Z\be+\frac{15}{18}\be}, \ \ \ \ L(1,1)^1 \cong V_{\Z\be+\frac{3}{18}\be}, \ \ \ \ L(1,1)^2 \cong V_{\Z\be+\frac{9}{18}\be},\]

    \[L(1,1)^{T_1} \cong V_{\Z\be+\frac{4}{18}\be} \oplus V_{\Z\be+\frac{10}{18}\be} \oplus V_{\Z\be+\frac{16}{18}\be},\]
    \[L(1,1)^{T_1,0} \cong V_{\Z\be+\frac{16}{18}\be}, \ \ \ \ L(1,1)^{T_1,1} \cong V_{\Z\be+\frac{4}{18}\be}, \ \ \ \ L(1,1)^{T_1,2} \cong V_{\Z\be+\frac{10}{18}\be},\]

    \[L(1,1)^{T_2} \cong V_{\Z\be+\frac{5}{18}\be} \oplus V_{\Z\be+\frac{11}{18}\be} \oplus V_{\Z\be+\frac{17}{18}\be},\]
    \[L(1,1)^{T_2,0} \cong V_{\Z\be+\frac{17}{18}\be}, \ \ \ \ L(1,1)^{T_2,1} \cong V_{\Z\be+\frac{11}{18}\be}, \ \ \ \ L(1,1)^{T_2,2} \cong V_{\Z\be+\frac{5}{18}\be}.\]    
\end{rmk}




\section{Quantum dimensions and fusion rules for the orbifold vertex operator algebra $L(k,0)^{\Z_3}$}

In this section, we first recall from \cite{DRX17} some results on the quantum dimensions of irreducible $g$-twisted $V$-modules and irreducible $V^G$-modules for $G$ being a finite automorphism group of the vertex operator algebra $V$. Then we compute the quantum dimensions for irreducible modules of the orbifold vertex operator algebra $L(k,0)^{\Z_3}$. Finally, we determine the fusion rules for the orbifold vertex operator algebras $L(k,0)^{\Z_3}$. 

%

Let $V$ be a vertex operator algebra, $g$ an automorphism of $V$ with order $T$ and $M=\oplus_{n\in \frac{1}{T}\mathbb{Z}_{\geqslant 0}}M_{\lambda + n}$ a $g$-twisted $V$-module. For any homogeneous element $v \in V$ we define a trace function associated to $v$ as follows:
    \begin{equation}
        Z_M(v,q) = tr_Mo(v)q^{L(0)-\frac{c}{24}} = q^{\lambda-\frac{c}{24}}\sum_{n\in \frac{1}{T}\mathbb{Z}_{\geqslant 0}}tr_{M_{\lambda + n}}o(v)q^{n}
    \end{equation}
where $o(v)=v_{wtv-1}$ is the degree zero operator of $v$, $c$ is the central charge of the vertex operator algebra $V$ and $\lambda$ is the conformal weight of $M$. This is a formal power series in variable $q$. It is proved in \cite{DLM98-1}, \cite{Zhu96} that $Z_M(v,q)$ converges to a holomorphic function, denoted by $Z_M(v,\tau)$, in the domain $|q| < 1$ if $V$ is $C_2$-cofinite. Here and below, $\tau$ is in the upper half plane $\mathbb{H}$ and $q = e^{2\pi\sqrt{-1}\tau}$. Note that if $v = \mathbbm{1}$ is the vacuum vector, then $Z_M(\mathbbm{1},q)$ is the formal character of $M$. We simply denote $Z_M(\mathbbm{1},q)$ and $Z_M(\mathbbm{1},\tau)$ by $\chi_M(q)$ and $\chi_M(\tau)$, respectively. $\chi_M(q)$ is called the \emph{character} of $M$.

Let $V$ be a regular and selfdual vertex operator algebra of CFT type and $G$ a finite automorphism group of $V$. Let $g \in G$ and $M$ a $g$-twisted $V$-module. Then $M$ is a finite sum of irreducible $g$-twisted $V$-modules. In particular, each homogeneous subspace of $M$ is finite dimensional. From the above discussion, we know that $\chi_V(\tau)$ and $\chi_M(\tau)$ are holomorphic functions on $\mathbb{H}$. In \cite{DJX13}, the \emph{quantum dimension} of $M$ over $V$ is defined to be
     \[ qdim_VM = \lim_{y \to 0^+}\frac{\chi_M(\sqrt{-1}y)}{\chi_V(\sqrt{-1}y)} = \lim_{q \to 1^-}\frac{ch_qM}{ch_qV} \]
where $y$ is real and positive, $q = e^{2\pi \sqrt{-1}\tau}, \tau = \sqrt{-1}y$.
From \cite{DJX13}, we know that for any $V$-module $M$, $qdim_VM$ always exists and is greater than or equal to $1$ if the weight of each irreducible $V$-module is positive except $V$ itself. It was proved in \cite{DRX17} that for any $g \in G$ and any $g$-twisted $V$-module $M$, $qdim_VM$ always exists and is nonnegative. Also, $qdim_V(M\circ h) = qdim_VM$ for any $h \in G$ and $qdim_VM = qdim_VM'$.

\begin{lem} (\cite{DJX13})
    Let $V$ be a regular and selfdual vertex operator algebra of CFT type, and $M^0 \cong V, M^1, \cdots, M^d$ are all inequivalent irreducible $V$-modules. We also assume that the conformal weights $\lambda_i$ of $M^i$ are positive for all $i > 0$. Then 
        \begin{equation}
            qdim_V(M^i \boxtimes_V M^j) = qdim_VM^i \cdot qdim_VM^j  
        \end{equation}
    for $0 \leqslant i, j \leqslant d$.
\end{lem}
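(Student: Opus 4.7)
The plan is to reduce the statement to a computation involving the modular $S$-matrix and the Verlinde formula, exploiting only the data already guaranteed by the regular, self-dual, CFT-type hypothesis. First I would invoke the tensor product theorem recalled earlier to expand
\[ M^i \boxtimes_V M^j \cong \bigoplus_{k=0}^{d} N_{M^i,M^j}^{M^k}\, M^k. \]
Since the formal character is additive on direct sums and the quantum dimension is obtained from characters by the single limit $y \to 0^+$, linearity gives
\[ qdim_V(M^i \boxtimes_V M^j) \;=\; \sum_{k=0}^{d} N_{M^i,M^j}^{M^k}\, qdim_V M^k. \]
So the task reduces to identifying the right-hand side with $qdim_V M^i \cdot qdim_V M^j$.

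Next I would use Zhu's modular invariance theorem, applicable because $V$ is regular (hence rational and $C_2$-cofinite) and of CFT type: the characters $\chi_{M^k}(\tau)$ span a finite-dimensional representation of $SL_2(\mathbb{Z})$, with $\chi_{M^i}(-1/\tau)=\sum_j S_{ij}\chi_{M^j}(\tau)$ for a symmetric unitary matrix $S=(S_{ij})$. Evaluating the ratio $\chi_{M^i}/\chi_V$ along $\tau=\sqrt{-1}y$ and applying the $S$-transformation converts the $y\to 0^+$ limit into a $\tau \to \sqrt{-1}\infty$ limit; since the conformal weights satisfy $\lambda_0 = 0$ and $\lambda_k > 0$ for $k > 0$, the $k=0$ term dominates both numerator and denominator, yielding the classical identification
\[ qdim_V M^i \;=\; \frac{S_{i,0}}{S_{0,0}}. \]

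Finally I would invoke the Verlinde formula $N_{M^i,M^j}^{M^k} = \sum_{s} \frac{S_{is}S_{js}\overline{S_{ks}}}{S_{0s}}$, substitute into the expression from the first step, interchange the sums, and use the unitarity identity $\sum_k \overline{S_{ks}}S_{k,0} = \delta_{s,0}$ to collapse the inner sum. The result is
\[ qdim_V(M^i \boxtimes_V M^j) \;=\; \frac{1}{S_{0,0}}\sum_{s}\frac{S_{is}S_{js}}{S_{0s}}\delta_{s,0} \;=\; \frac{S_{i,0}S_{j,0}}{S_{0,0}^{2}} \;=\; qdim_V M^i \cdot qdim_V M^j, \]
as desired.

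The main obstacle is not the algebra, which is routine once the framework is set up, but marshalling the deeper inputs: one must legitimately appeal to Zhu's modular invariance and to the Verlinde formula in the vertex operator algebra setting, and one must justify exchanging the limit $y \to 0^+$ with the infinite sums implicit in the $S$-transformation. All of this is available under the regularity and CFT-type assumptions, but a careful write-up should cite precisely where these analytic and representation-theoretic facts are established (e.g.\ Zhu, Dong--Li--Mason, and Huang). Alternatively, if one prefers to avoid modular invariance entirely, one can mimic the argument in \cite{DJX13}: show directly that $qdim_V$ is a nonnegative additive function on the Grothendieck ring satisfying $qdim_V V = 1$, and use that the fusion matrices $N_i$ of a modular tensor category are simultaneously diagonalizable with a common Perron--Frobenius eigenvector whose components are the quantum dimensions; multiplicativity then follows from $N_i N_j = \sum_k N_{ij}^k N_k$ applied to that eigenvector.
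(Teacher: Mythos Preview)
The paper does not prove this lemma; it is quoted without proof as a result from \cite{DJX13}. Your proposal is a correct outline of the standard argument and is essentially the one carried out in \cite{DJX13}: identify $qdim_V M^i = S_{i,0}/S_{0,0}$ via the $S$-transformation of characters (the hypothesis $\lambda_i>0$ for $i>0$ is precisely what makes the $k=0$ term dominate as $q\to 0$), and then combine the Verlinde formula with unitarity of $S$ to collapse $\sum_k N_{ij}^k S_{k,0}/S_{0,0}$ to $(S_{i,0}/S_{0,0})(S_{j,0}/S_{0,0})$.
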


\begin{lem} (\cite{DRX17}) \label{qdim formula}
    Let $V$ be a regular and selfdual vertex operator algebra of CFT type, $G$ a finite automorphism group of $V$, $g \in G$ and $M$ a $g$-twisted $V$-module, $\lambda \in \Lambda_{G_M, \alpha_M}$. If the weight of any irreducible $g$-twisted $V$-module is positive except $V$ itself. Then
        \begin{equation} qdim_{V^G}M = |G| \cdot qdim_VM, \end{equation}
        \begin{equation} qdim_{V^G}M_{\lambda} = [G:G_M] \cdot dimW_{\lambda} \cdot qdim_VM. \end{equation}
    Moreover, $qdim_VM$ takes values in $\{2\cos\frac{\pi}{n}|n\geqslant 3\} \cup [2, \infty)$.
\end{lem}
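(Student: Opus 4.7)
The plan is to prove the three assertions by analyzing the asymptotic behavior of (twisted) trace functions as $q \to 1^-$ for parts (1) and (2), and to invoke a Perron-Frobenius / subfactor argument for part (3). The key analytic input throughout is the modular $S$-transformation of twisted trace functions established in the rational, $C_2$-cofinite setting (cf.\ \cite{DLM00}, \cite{Zhu96}), which translates the behavior of $Z_M(h,\tau)$ near $\tau = 0$ into asymptotics of dual characters at $\tau = i\infty$, that is, into information about the lowest conformal weights of appropriate twisted modules.

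For part (1), I begin with the standard projection identity
\[
\chi_{V^G}(\tau) = \frac{1}{|G|}\sum_{g\in G} Z_V(g,\tau), \quad Z_V(g,\tau) = tr_V\, g\, q^{L(0)-\frac{c}{24}}.
\]
Applying $\tau \mapsto -1/\tau$ expresses $Z_V(g,\tau)$ as a linear combination of ordinary characters of irreducible $g$-twisted $V$-modules, whose leading growth along $\tau \in i\mathbb{R}_+$ as $\tau \to 0^+$ is governed by the minimal conformal weight $\lambda_g$ of such a module. The positivity hypothesis gives $\lambda_g > 0$ whenever $g \neq 1$, whereas $\chi_V(\tau) = Z_V(1,\tau)$ has leading exponent $-c/24$ from the vacuum. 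Hence $Z_V(g,\tau)/\chi_V(\tau) \to 0$ as $q \to 1^-$ for every $g \neq 1$, so $\chi_{V^G}/\chi_V \to 1/|G|$. Factoring $\chi_M/\chi_{V^G} = (\chi_M/\chi_V)\cdot(\chi_V/\chi_{V^G})$ and passing to the limit then yields $qdim_{V^G}M = |G|\cdot qdim_V M$.

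For part (2), I would use the isotypic decomposition (\ref{any irr twist mod decomp}) together with character-orthogonality for the twisted group algebra $\mathbb{C}^{\alpha_M}[G_M]$ to write
\[
\chi_{M_\lambda}(q) = \frac{1}{|G_M|}\sum_{h\in G_M} \overline{\lambda(h)}\, Z_M(h,q), \quad Z_M(h,q) = tr_M\, \phi(h)\, q^{L(0)-\frac{c}{24}}.
\]
The same modular-invariance argument, applied to $Z_M(h,\tau)$, identifies its $\tau \to 0^+$ leading exponent with the minimal conformal weight among $h$-twisted $V$-modules: this is positive whenever $h \neq 1$, so only the $h=1$ term survives in the limit. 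Combined with part (1), this gives
\[
qdim_{V^G} M_\lambda = \frac{\lambda(1)}{|G_M|}\cdot |G|\cdot qdim_V M = [G:G_M]\cdot \dim W_\lambda \cdot qdim_V M.
\]
Here I would also invoke Theorems \ref{DRX17 thm1} and \ref{DRX17 thm2} to confirm that the irreducible $V^G$-module structure attached to $M_\lambda$ (obtained by gathering the $[G:G_M]$ conjugates of $M$ under the $G$-action and inducing from $V^{G_M}$ to $V^G$) is consistent with the character computation just performed.

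For part (3), the quantum dimension $qdim_V M$ equals the Perron-Frobenius eigenvalue of the non-negative integer fusion matrix $N_M$ acting on the finite-dimensional lattice spanned by classes of irreducible modules; this follows from the multiplicativity $qdim_V(M\boxtimes N) = qdim_V M\cdot qdim_V N$ applied to the rows of $N_M$. Jones' theorem (equivalently, Kronecker's theorem on totally real algebraic integers whose Galois conjugates lie in $[-2,2]$) then confines this Perron eigenvalue to the set $\{2\cos(\pi/n) : n \geqslant 3\} \cup [2,\infty)$. The twisted case is reduced to this by using that the category of $g$-twisted modules is a module category over the fusion category of untwisted $V$-modules, so the twisted fusion matrices are again non-negative integer matrices and the same Perron-Frobenius constraint applies. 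The main obstacle I anticipate is the rigorous justification of the termwise $S$-transformation and the resulting leading-order asymptotics for the twisted traces $Z_V(g,\tau)$ and $Z_M(h,\tau)$ as $q \to 1^-$; this relies on the full modular-invariance machinery of Dong-Li-Mason adapted to the twisted setting, together with the positivity hypothesis on non-vacuum conformal weights to rule out the non-identity contributions.
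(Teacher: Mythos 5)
The first thing to note is that the paper offers no proof of this lemma at all: it is imported verbatim from \cite{DRX17}, so there is no in-paper argument to compare against, and your reconstruction should be measured against the proof in that reference. On that score your sketch is essentially the standard (and correct) one. For (1) and (2), the projection identities $\chi_{V^G}(\tau)=\frac{1}{|G|}\sum_{g\in G}Z_V(g,\tau)$ and $\chi_{M_\lambda}(\tau)=\frac{1}{|G_M|}\sum_{h\in G_M}\overline{\lambda(h)}Z_M(h,\tau)$, combined with the modular invariance theorem of \cite{DLM00} and the positivity hypothesis to kill every non-identity summand in the $q\to 1^-$ limit, is exactly the mechanism used in \cite{DRX17}; two points you should make explicit are (i) that the hypothesis must be read as positivity of the lowest weight of every irreducible $h$-twisted module for \emph{every} $h\in G\setminus\{1\}$ (not just the fixed $g$), since all sectors $(h,1)$ and $(h,g)$ enter the two sums, and (ii) that one needs $S_{0,0}>0$ so that $\chi_V$ genuinely dominates and the limits defining the quantum dimensions exist and are finite — this is part of the \cite{DJX13} machinery the paper already assumes. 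Your handling of the normalization $\lambda(1)=\dim W_\lambda$ and of the factor $[G:G_M]$ (via the $G$-orbit of $M$) is consistent with Theorems \ref{DRX17 thm1}--\ref{DRX17 thm3}. For (3), the Perron--Frobenius/Kronecker rigidity argument is the right one, but your reduction of the twisted case via "the $g$-twisted modules form a module category over the untwisted fusion category, so the twisted fusion matrices are non-negative integer matrices" quietly presupposes the existence and integrality of untwisted-by-twisted fusion products, which is itself a nontrivial theorem; in \cite{DRX17} this step is instead run through the induction--restriction (branching) data between irreducible twisted $V$-modules and irreducible $V^G$-modules, which manifestly produces a non-negative integer matrix to which Kronecker's theorem applies. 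None of these are fatal gaps for a lemma the paper itself only cites, but they are the places where your sketch leans on machinery that would have to be quoted precisely.
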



\begin{lem} \label{qdim of irr V mods}
    For $0 \leqslant i \leqslant k$, the quantum dimensions of irreducible $L(k,0)$-modules are
        \begin{equation}  
            qdim_{L(k,0)}L(k,i) = \frac{\sin\frac{\pi(i+1)}{k+2}}{\sin\frac{\pi}{k+2}}. 
        \end{equation}
\end{lem}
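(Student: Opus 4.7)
The plan is to exploit the well-known modular transformation properties of the characters of the integrable highest weight $\widehat{\mathfrak{sl}_2}$-modules $L(k,i)$. For $0\leqslant i\leqslant k$ the characters $\chi_{L(k,i)}(\tau)$ are holomorphic on the upper half-plane $\mathbb{H}$, and by a classical result of Kac-Peterson they span an $(k+1)$-dimensional representation of $SL_2(\mathbb{Z})$. In particular, under the modular transformation $S\colon \tau\mapsto -1/\tau$, one has
\[
\chi_{L(k,i)}(-1/\tau)=\sum_{j=0}^{k}S_{ij}\,\chi_{L(k,j)}(\tau),
\qquad
S_{ij}=\sqrt{\tfrac{2}{k+2}}\,\sin\tfrac{(i+1)(j+1)\pi}{k+2}.
\]

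First I would rewrite the defining limit $qdim_{L(k,0)}L(k,i)=\lim_{y\to 0^+}\chi_{L(k,i)}(\sqrt{-1}y)/\chi_{L(k,0)}(\sqrt{-1}y)$ by substituting $\tau=\sqrt{-1}y$ and passing to $\tau'=-1/\tau$ via the $S$-transformation:
\[
qdim_{L(k,0)}L(k,i)=\lim_{\tau'\to \sqrt{-1}\infty}\frac{\sum_{j=0}^{k}S_{ij}\,\chi_{L(k,j)}(\tau')}{\sum_{j=0}^{k}S_{0j}\,\chi_{L(k,j)}(\tau')}.
\]
Next I would read off the leading behavior of each character as $q'=e^{2\pi\sqrt{-1}\tau'}\to 0$. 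Since $L(k,j)$ has conformal weight $\lambda_j=\frac{j(j+2)}{4(k+2)}$ and the Virasoro central charge is $c=\frac{3k}{k+2}$, one has $\chi_{L(k,j)}(\tau')=q'^{\,\lambda_j-c/24}(1+O(q'))$. The exponent $\lambda_j-c/24$ is strictly increasing in $j$ for $0\leqslant j\leqslant k$, so the dominant contribution in both the numerator and denominator comes from $j=0$.

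Consequently the ratio tends to $S_{i,0}/S_{0,0}=\sin\frac{(i+1)\pi}{k+2}\big/\sin\frac{\pi}{k+2}$, which is exactly the asserted formula. The only delicate point is justifying termwise comparison of the two sums as $q'\to 0$, but because the gap $\lambda_1-\lambda_0=\frac{3}{4(k+2)}>0$ is positive and the leading coefficient $S_{0,0}=\sqrt{\tfrac{2}{k+2}}\sin\tfrac{\pi}{k+2}>0$ is nonzero, the subdominant terms are of order $q'^{\lambda_1-\lambda_0}$ smaller and the limit is literally $S_{i,0}/S_{0,0}$. No further computation is required.
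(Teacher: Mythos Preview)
Your argument is correct and is the standard derivation: apply the Kac--Peterson $S$-transformation to the $\widehat{\mathfrak{sl}_2}$ characters, then isolate the dominant $j=0$ term as $q'\to 0$ to obtain $S_{i,0}/S_{0,0}$. All the ingredients you quote (the $S$-matrix entries, the conformal weights $\lambda_j=\tfrac{j(j+2)}{4(k+2)}$, the central charge $c=\tfrac{3k}{k+2}$, and the positivity of $S_{0,0}$) are accurate, and the asymptotic comparison is justified exactly as you say.

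There is nothing to compare against in the paper: this lemma is stated there without proof or explicit citation, as a well-known fact (the general identity $qdim_V M^i=S_{i,0}/S_{0,0}$ for rational, $C_2$-cofinite, selfdual $V$ of CFT type is established in \cite{DJX13}, and the $\widehat{\mathfrak{sl}_2}$ $S$-matrix is classical). Your write-up simply supplies the details that the paper omits.
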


\begin{lem} (\cite{DJJJY15})  \label{qdim of twisted V mods}
    For $0 \leqslant i \leqslant k$, the quantum dimensions of $\si$-twisted $L(k,0)$-modules and of $\si^2$-twisted $L(k,0)$-modules are
        \begin{equation} 
            qdim_{L(k,0)}{L(k,i)^{T_r}} = \frac{\sin\frac{\pi(i+1)}{k+2}}{\sin\frac{\pi}{k+2}}, \ \ r = 1, 2. 
        \end{equation}
\end{lem}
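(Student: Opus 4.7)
The plan is to reduce the quantum dimension to an asymptotic computation of two-variable affine characters, then extract the $q \to 1^-$ limit via the Kac-Peterson modular $S$-matrix.

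First, I would use Li's $\Delta$-operator construction to express the graded character of $L(k,i)^{T_r}$ as a specialization of the two-variable character of $L(k,i)$. From the identity $\Delta(h^{(r)},z)\omega = \omega + z^{-1}h^{(r)} + z^{-2}\tfrac{r^2k}{36}\1$ in (\ref{twist-h3}) one obtains $L_0^{(r)} = L(0) + \tfrac{r}{6}h(0) + \tfrac{r^2k}{36}$ on the underlying space $L(k,i)$, hence
$$\chi_{L(k,i)^{T_r}}(\tau) = q^{r^2k/36}\,\chi_{k,i}(\tau,\, r\tau/6),$$
where $\chi_{k,i}(\tau, z) = \mathrm{tr}_{L(k,i)}\, q^{L(0)-c/24}\,e^{2\pi\sqrt{-1}z h(0)}$ is the Weyl-Kac character of the integrable $\widehat{\mathfrak{sl}_2}$-module of highest weight $i\Lambda_1 + (k-i)\Lambda_0$.

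Second, I would apply the Kac-Peterson modular $S$-transformation to exchange the singular limit $\tau = \sqrt{-1}y \to 0$ for a manageable limit at $-1/\tau \to \sqrt{-1}\infty$:
$$\chi_{k,i}(\tau, z) = e^{-\pi\sqrt{-1}kz^2/\tau}\sum_{j=0}^{k} S_{ij}\,\chi_{k,j}(-1/\tau,\, z/\tau), \qquad S_{ij} = \sqrt{\tfrac{2}{k+2}}\sin\tfrac{\pi(i+1)(j+1)}{k+2}.$$
Substituting $z = r\tau/6$ gives $z/\tau = r/6$, a fixed finite elliptic variable, while $q' = e^{-2\pi/y} \to 0$; therefore each transformed character on the right is dominated by its lowest weight contribution $(q')^{h_{k,j}-c/24}\cdot \sin(\pi(j+1)r/6)/\sin(\pi r/6)$, the second factor being the $\mathfrak{sl}_2(\C)$-character of $L(j\alpha/2)$ evaluated at $e^{\pi\sqrt{-1}r/3}$.

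Finally, forming the ratio $\chi_{L(k,i)^{T_r}}(\tau)/\chi_{L(k,0)}(\tau)$ as $y \to 0^+$: the Gaussian factor $e^{-\pi\sqrt{-1}kz^2/\tau}$ evaluated at $z = r\tau/6$ combines with the prefactor $q^{r^2k/36}$ into a real exponential that tends to $1$, and the dominant $j = 0$ term (minimal conformal weight $h_{k,0} = 0$) collapses the $\mathfrak{sl}_2(\C)$-character factor to $1$ in both numerator and denominator. What remains is precisely $S_{i,0}/S_{0,0} = \sin(\pi(i+1)/(k+2))/\sin(\pi/(k+2))$, uniformly for $r \in \{1,2\}$. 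The main obstacle is the careful tracking of the modular Gaussian phase and verifying that it exactly cancels the spectral-flow prefactor $q^{r^2k/36}$ in the limit; this is a finite-dimensional bookkeeping with the Weil representation data for $\widehat{\mathfrak{sl}_2}$ at level $k$, but it is the only delicate step. A quicker alternative is simply to invoke the explicit computation in \cite{DJJJY15}, verifying only that the normalization of $h^{(r)}$ matches.
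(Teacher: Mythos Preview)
The paper does not actually prove this lemma; it is stated with a citation to \cite{DJJJY15} and no argument is given. Your proposal therefore goes well beyond what the paper does: you supply a genuine proof via Li's $\Delta$-operator identity $L_0^{(r)} = L(0) + \tfrac{r}{6}h(0) + \tfrac{r^2k}{36}$, the resulting identification of the twisted character with a specialization $q^{r^2k/36}\chi_{k,i}(\tau, r\tau/6)$ of the Weyl--Kac character, and the Kac--Peterson $S$-transformation to extract the $\tau \to 0$ asymptotics. This is the standard method for computing quantum dimensions of modules obtained by spectral flow, and it is almost certainly the argument underlying the cited reference.

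Your outline is correct. Two small remarks. First, a normalization slip: with $z$ defined so that the elliptic variable is $e^{2\pi\sqrt{-1}zh(0)}$, the finite $\mathfrak{sl}_2$-character of $L(j\alpha/2)$ at $z=r/6$ is $\sin(\pi r(j+1)/3)/\sin(\pi r/3)$, not $\sin(\pi r(j+1)/6)/\sin(\pi r/6)$; this is harmless because for the dominant $j=0$ term the ratio is $1$ either way. Second, the Gaussian prefactor $e^{-\pi\sqrt{-1}k z^2/\tau}$ at $z=r\tau/6$ combined with $q^{r^2k/36}$ does not cancel identically but yields a factor $e^{-\pi y r^2k/36}$ (with $\tau = \sqrt{-1}y$), which indeed tends to $1$ as $y\to 0^+$; your phrasing ``exactly cancels\ldots in the limit'' is accurate once read this way. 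The final ratio $S_{i0}/S_{00}$ is correct and independent of $r$, as required.
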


Note that $L(k,0)$ satisfiy all the condictions in Lemma \ref{qdim formula}, ${(\Z_3)}_{L(k,i)} = {\Z_3}$ and ${(\Z_3)}_{L(k,i)^{T_r}} = {\Z_3}$ for $0 \leqslant i \leqslant k$, $r = 1, 2$. Using Lemmas \ref{qdim formula}-\ref{qdim of twisted V mods}, we can compute the quantum dimensions of $L(k,0)^{\Z_3}$-modules: 
    \begin{equation}  
         qdim_{L(k,0)^{\Z_3}}L(k,i) =  qdim_{L(k,0)^{\Z_3}}L(k,i)^{T_r} = 3\frac{\sin\frac{\pi(i+1)}{k+2}}{\sin\frac{\pi}{k+2}}, 
    \end{equation}
for $0 \leqslant i \leqslant k$, $r = 1, 2$.  Therefore, we can easily obtain the quantum dimensions of irreducible $L(k,0)^{\Z_3}$-modules.

\begin{thm}
    The quantum dimensions of irreducible $L(k,0)^{\Z_3}$-modules are
        \begin{equation}  
            qdim_{L(k,0)^{\Z_3}}L(k,i)^j = qdim_{L(k,0)^{\Z_3}}L(k,i)^{T_r, j} = \frac{\sin\frac{\pi(i+1)}{k+2}}{\sin\frac{\pi}{k+2}}, 
        \end{equation}
    for $0 \leqslant i \leqslant k$, $r = 1, 2$, $j = 0, 1, 2$.
\end{thm}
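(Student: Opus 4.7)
The plan is to apply the general formula
\[ qdim_{V^G} M_\lambda = [G : G_M] \cdot \dim W_\lambda \cdot qdim_V M \]
from Lemma \ref{qdim formula} with $V = L(k,0)$, $G = \Z_3$, and $M$ running over the irreducible $\si^r$-twisted $L(k,0)$-modules for $r = 0, 1, 2$; namely $M = L(k,i)$ when $r = 0$ and $M = L(k,i)^{T_r}$ otherwise. Since $L(k,0)$ is regular and selfdual of CFT type and the conformal weight of every nontrivial irreducible (twisted) $L(k,0)$-module is strictly positive (as is visible from Lemma \ref{lw a_kir} and the two tables), all hypotheses of Lemma \ref{qdim formula} are met.

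Next I would verify the three ingredients. The quantum dimension of the ambient module is supplied by Lemma \ref{qdim of irr V mods} in the untwisted case and by Lemma \ref{qdim of twisted V mods} in the twisted case, both yielding
\[ qdim_{L(k,0)} M \;=\; \frac{\sin\frac{\pi(i+1)}{k+2}}{\sin\frac{\pi}{k+2}}. \]
For the stabilizer index, Lemma \ref{irr Z3_M subgroup} gives $(\Z_3)_{L(k,i)} = \Z_3$, and the proof of Theorem \ref{twisted decomposition} records $(\Z_3)_{L(k,i)^{T_r}} = \Z_3$ since $\si^r$ itself lies in the stabilizer of any $\si^r$-twisted module. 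Hence $[G : G_M] = 1$ in every case. Finally, because $\Z_3$ is cyclic we have $H^2(\Z_3, \C^*) = 0$, so the twisted group algebra $\C^{\alpha_M}[\Z_3]$ is isomorphic to the ordinary group algebra $\C[\Z_3] \cong \C \oplus \C \oplus \C$; each irreducible module $W_\lambda$ is therefore one-dimensional.

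Substituting $[G : G_M] = 1$ and $\dim W_\lambda = 1$ into Lemma \ref{qdim formula} yields immediately
\[ qdim_{L(k,0)^{\Z_3}} L(k,i)^j \;=\; qdim_{L(k,0)^{\Z_3}} L(k,i)^{T_r, j} \;=\; \frac{\sin\frac{\pi(i+1)}{k+2}}{\sin\frac{\pi}{k+2}}, \]
as claimed. As an internal consistency check, summing over $j = 0, 1, 2$ recovers the identity $qdim_{L(k,0)^{\Z_3}} M = 3 \cdot qdim_{L(k,0)} M$ displayed just before the statement, which in turn matches the companion formula $qdim_{V^G} M = |G| \cdot qdim_V M$ of Lemma \ref{qdim formula}. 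There is no genuine obstacle here once the preceding machinery is in hand; the only subtlety worth flagging is the vanishing of $H^2(\Z_3, \C^*)$, which trivializes the cocycle $\alpha_M$ and is what forces $\dim W_\lambda = 1$ uniformly across all six twisted-sector decompositions.
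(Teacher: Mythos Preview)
Your proposal is correct and follows essentially the same route as the paper: both invoke Lemma~\ref{qdim formula} with $[G:G_M]=1$ (from Lemma~\ref{irr Z3_M subgroup} and the stabilizer remark in Theorem~\ref{twisted decomposition}) together with Lemmas~\ref{qdim of irr V mods}--\ref{qdim of twisted V mods} for $qdim_{L(k,0)}M$. The only difference is cosmetic: the paper first records $qdim_{L(k,0)^{\Z_3}}M = 3\cdot qdim_{L(k,0)}M$ and then states the theorem without further comment, whereas you apply the second formula of Lemma~\ref{qdim formula} directly and justify $\dim W_\lambda = 1$ via $H^2(\Z_3,\C^*)=0$; the paper instead relies on the explicit eigenspace decompositions in Theorems~\ref{irr decomposition} and~\ref{twisted decomposition} (where the one-dimensionality of each $W_\lambda$ is already established), so your cohomological remark is an acceptable shortcut but not strictly needed given what precedes.
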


It is observed that $qdim_{L(1,0)^{\Z_3}}M = 1$ for any irreducible $L(1,0)^{\Z_3}$-module $M$. As a consequence, all the irreducible $L(1,0)^{\Z_3}$-modules are simple currents \cite{DJX13}.

Let $V$ be a vertex operator algebra with only finitely many irreducible modules, the \emph{global dimension} is defined as $glob(V) = \sum_{M \in Irr(V)}qdim(M)^2$ \cite{DJX13}. 
Assume G is a finite subgroup of $\Aut(G)$, it is proved that $|G|^2glob(V) = glob(V^G)$ \cite{ADJR18}, \cite{DRX17}.
One immediately gets that 
    \[glob(L(k,0)^{\Z_3}) = 9\sum_{i=0}^{k}(\frac{\sin\frac{\pi(i+1)}{k+2}}{\sin\frac{\pi}{k+2}})^2. \]

Now we recall from \cite{TK88} the fusion rules for the simple affine vertex operator algebra $L(k,0)$.

\begin{lem} 
    \begin{equation} 
        L(k,i) \boxtimes_{L(k,0)} L(k,j) = \sum_{\substack{|i-j| \leqslant l \leqslant i + j \\ i + j + l \in 2\mathbb{Z} \\ i + j + l \leqslant 2k}} L(k,l).
    \end{equation}
\end{lem}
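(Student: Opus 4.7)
The plan is to invoke the Frenkel--Zhu theorem, which computes fusion rules for a rational, $C_2$-cofinite vertex operator algebra $V$ via bimodules over the Zhu algebra $A(V)$. For irreducible $V$-modules $M^1, M^2, M^3$ with top components $M^i(0)$, this theorem yields
$$N_{M^1, M^2}^{M^3} = \dim \Hom_{A(V)}\bigl( A(M^1) \otimes_{A(V)} M^2(0),\, M^3(0) \bigr),$$
which I would specialize to $V = L(k,0)$ and the three modules $L(k,i)$, $L(k,j)$, $L(k,l)$.

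First, recall that $A(L(k,0))$ is the finite dimensional quotient of $U(\s_2)$ whose complete list of simple modules is precisely $\{L(\tfrac{i\al}{2}) : 0 \leqslant i \leqslant k\}$; in particular, the top component of $L(k,i)$ is $M^i(0) = L(\tfrac{i\al}{2})$, the $(i{+}1)$-dimensional irreducible $\s_2$-module. Next, I would identify the $A(L(k,0))$-bimodule $A(L(k,i))$. By the general Frenkel--Zhu construction, $A(L(k,i))$ is a subquotient of $L(k,i)$ on which both sides act through zero modes; as a left $U(\s_2)$-module it is generated by the top component $L(\tfrac{i\al}{2})$, modulo the relations imposed by (i) the defining ideal of $A(L(k,0))$ acting from either side, and (ii) the singular vectors of the generalized Verma module $V_{\ws}(k,i)$ that vanish in its simple quotient $L(k,i)$.

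At the purely $U(\s_2)$-level, classical Clebsch--Gordan gives
$$L(\tfrac{i\al}{2}) \otimes L(\tfrac{j\al}{2}) \;=\; \bigoplus_{\substack{|i-j| \leqslant l \leqslant i+j \\ i+j+l \in 2\Z}} L(\tfrac{l\al}{2}),$$
each summand appearing with multiplicity one. This accounts immediately for the triangle inequality and the parity constraint in the assertion.

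The hard part is the level truncation $i+j+l \leqslant 2k$, which has no classical $\s_2$ counterpart. For this I would exploit the affine singular vector $e(-1)^{k-i+1}v^{i,0} \in V_{\ws}(k,i)$ whose image vanishes in $L(k,i)$, an analogue of the vanishing relations $e(-1)v^{k,0}=0$ and $f(-1)v^{k,k}=0$ established in Lemma~\ref{two 0 in L(k,k)}. Projecting this relation into $A(L(k,i))$ and contracting with $L(\tfrac{j\al}{2})$ produces a relation inside $A(L(k,i)) \otimes_{A(L(k,0))} L(\tfrac{j\al}{2})$ which forces $\Hom_{A(L(k,0))}\bigl(-,\, L(\tfrac{l\al}{2})\bigr)$ to vanish whenever $l > 2k - i - j$, yielding the desired cutoff. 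The main technical obstacle is the careful zero-mode bookkeeping inside $A(L(k,i))$ needed to extract this vanishing cleanly. Once the cutoff is established, one checks that each allowed $L(k,l)$ does occur with multiplicity one, either by exhibiting an explicit intertwining operator built from the action of $\ws$ on a suitable coinvariant space, or by matching degrees via the Verlinde formula applied to the known $S$-matrix of $\ws$ at level $k$.
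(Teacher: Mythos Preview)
The paper does not prove this lemma at all: it is stated with the preamble ``Now we recall from \cite{TK88} the fusion rules\ldots'' and is simply imported from Tsuchiya--Kanie. So there is no argument in the paper to compare against; your proposal supplies a proof where the paper gives only a citation. What you outline is essentially the Frenkel--Zhu approach of \cite{FZ92}, which is a legitimate and historically standard alternative to the conformal-block/coinvariant computation of \cite{TK88}. The two routes differ in flavor: Tsuchiya--Kanie work geometrically with spaces of conformal blocks on $\mathbb{P}^1$ and derive the truncation from the propagation of vacua and the null-vector decoupling on the sphere, whereas Frenkel--Zhu work purely algebraically with $A(V)$-bimodules and extract the truncation from singular vectors in the Zhu quotient.

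That said, your write-up is a plan rather than a proof. Two places are genuinely incomplete. First, the level truncation: you correctly identify the relevant affine singular vector $e(-1)^{k-i+1}v^{i,0}$, but you then say only that ``projecting this relation into $A(L(k,i))$ and contracting with $L(\tfrac{j\al}{2})$ produces a relation\ldots'' and label the actual computation a ``technical obstacle''. This is exactly the heart of the matter---one must show that the image of this singular vector in $A(L(k,i))\otimes_{A(L(k,0))} L(\tfrac{j\al}{2})$ kills the projection to $L(\tfrac{l\al}{2})$ precisely when $l>2k-i-j$, and doing so requires an explicit description of $A(L(k,i))$ as a bimodule (as carried out in \cite{FZ92}), not just a pointer to Lemma~\ref{two 0 in L(k,k)}, which covers only the extremal case $i=k$. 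Second, your fallback for multiplicity one, ``matching degrees via the Verlinde formula applied to the known $S$-matrix'', risks circularity: in this paper's logical order the Verlinde-type identity is used downstream (in the quantum-dimension product formula), and invoking it here would presuppose exactly the fusion rules you are trying to establish. Either complete the bimodule computation directly, or cite \cite{TK88} or \cite{FZ92} as the paper does.
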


The following Lemma follows from \cite{DLM96-1}.

\begin{lem}  \label{intertwin Y(si^r)}
    For $0 \leqslant i, j, l \leqslant k$, $i + j + l \in 2\mathbb{Z}$, $i + j + l \leqslant 2k$, let $\mathscr{Y}(\cdot, z)$ be an intertwining operator of type 
        $\left(\begin{array}{c}
                 \ L(k,l)\ \\
                L(k,i) \ L(k,j)
              \end{array}\right)$. 
    Define $\mathscr{Y}_{\si^r}(\cdot, z)=\mathscr{Y}(\Delta(h^{(r)}, z)\cdot, z)$. Then $\mathscr{Y}_{\si^r}(\cdot, z)$ is an intertwining operator of type 
        $\left(\begin{array}{c}
                 \ L(k,l)^{T_r}\ \\
                L(k,i)   \    L(k,j)^{T_r}
              \end{array}\right)$.
\end{lem}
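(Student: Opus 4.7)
The plan is to verify the three defining axioms of an intertwining operator directly, exploiting the fact that, by the Lemma preceding Theorem~3.6, the $\sigma^r$-twisted module structure on $L(k,j)^{T_r}$ is obtained from the untwisted structure on $L(k,j)$ via Li's $\Delta$-operator construction. Thus the assertion is an intertwining-operator analogue of Li's theorem \cite{Li97-2}, and the proof is structurally parallel to that of the untwisted-to-twisted module construction.

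First I would check the lower truncation condition: for $u\in L(k,i)$ and $v\in L(k,j)^{T_r}=L(k,j)$, one expands
\[
\Delta(h^{(r)},z)u \;=\; z^{h^{(r)}(0)}\exp\!\Bigl(\sum_{n\ge 1}\tfrac{h^{(r)}(n)}{-n}(-z)^{-n}\Bigr)u.
\]
Since $h^{(r)}(n)u=0$ for $n$ sufficiently large and $h^{(r)}(0)$ acts on $L(k,i)$ semisimply with rational eigenvalues determined by the $\mathbb{Z}_3$-grading, $\Delta(h^{(r)},z)u$ is a finite sum of terms of the form $z^{\lambda}\cdot p(z^{-1})u'$ with $p$ a polynomial and $\lambda\in\frac{1}{3}\mathbb{Z}$. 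Substituting into $\mathscr{Y}(\cdot,z)$ and using that $\mathscr{Y}(u',z)v$ has only finitely many negative integer powers of $z$ below any bound produces only finitely many negative powers of $z$ in $\mathscr{Y}_{\sigma^r}(u,z)v$ below any fixed bound, and the exponents lie in $\frac{1}{3}\mathbb{Z}$ with the correct coset determined by the $\sigma$-eigenvalue of $u$ (as needed for the target to be $\sigma^r$-twisted).

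Next I would establish the $L(-1)$-derivative property $\mathscr{Y}_{\sigma^r}(L(-1)v,z)=\frac{d}{dz}\mathscr{Y}_{\sigma^r}(v,z)$. The crucial input is the identity
\[
\Delta(h^{(r)},z)\,L(-1) \;=\; \bigl(L(-1)+\tfrac{d}{dz}\bigr)\,\Delta(h^{(r)},z),
\]
which is a direct computation from the formula for $\Delta$ and the bracket relations $[L(-1),h^{(r)}(n)]=-n\,h^{(r)}(n-1)$. Combining this with the untwisted $L(-1)$-property of $\mathscr{Y}$ yields the twisted analogue after a short manipulation.

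The main work, and the main obstacle, is the twisted Jacobi identity for $\mathscr{Y}_{\sigma^r}$. Here I would reduce the problem to the untwisted Jacobi identity for $\mathscr{Y}$ using the conjugation formula
\[
\Delta(h^{(r)},z_2)\,Y(a,z_0)\,\Delta(h^{(r)},z_2)^{-1}\;=\;Y(\Delta(h^{(r)},z_2+z_0)\Delta(h^{(r)},z_2)^{-1}a,\,z_0),
\]
for $a\in L(k,0)$, together with the already-recorded identity $Y_{\sigma^r}(a,z)=Y(\Delta(h^{(r)},z)a,z)$. The strategy is: rewrite each of the three terms in the proposed twisted Jacobi identity for $\mathscr{Y}_{\sigma^r}$ in terms of $Y$ and $\mathscr{Y}$ using the above relations, carefully tracking the fractional powers produced by $\delta$-function factors of the form $(\tfrac{z_1-z_0}{z_2})^{-r/T}$, and match them against the untwisted Jacobi identity for $\mathscr{Y}$ after multiplying by appropriate $\Delta$-factors. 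The bookkeeping with fractional exponents and binomial expansions is the delicate part; it is essentially the same computation as in Li's construction of $\sigma^r$-twisted modules from $L(k,0)$-modules, adapted to the intertwining setting, and one must verify that all contributions from derivatives of $\Delta(h^{(r)},z)$ cancel appropriately thanks to the constraints $h^{(r)}(n)v\in L(k,0)$ being supported only for $n\ge 0$ with specific action on $\omega$ given in \eqref{twist-h1}--\eqref{twist-h5}. Once this cancellation is effected, the twisted Jacobi identity for $\mathscr{Y}_{\sigma^r}$ emerges, completing the proof.
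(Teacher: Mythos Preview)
Your direct verification of the intertwining-operator axioms is sound in outline and is essentially the argument one finds in the cited literature. The paper itself does not give a proof of this lemma at all: it simply states ``The following Lemma follows from \cite{DLM96-1}'' and records the statement. So rather than a different approach, you are supplying the content that the paper outsources to a citation. Your sketch correctly identifies the three ingredients (lower truncation, the $L(-1)$-derivative property via the conjugation identity $\Delta(h^{(r)},z)L(-1)=(L(-1)+\tfrac{d}{dz})\Delta(h^{(r)},z)$, and the Jacobi identity via the conjugation formula for $\Delta$ against $Y$), and these are precisely the computations carried out in \cite{DLM96-1} and \cite{Li97-2} for the module case, adapted here to intertwining operators. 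One small point worth tightening: when you speak of the ``twisted Jacobi identity for $\mathscr{Y}_{\sigma^r}$'', you should state explicitly that the identity involves the untwisted vertex operator $Y$ on $L(k,i)$ and the $\sigma^r$-twisted vertex operators $Y_{\sigma^r}$ on $L(k,j)^{T_r}$ and $L(k,l)^{T_r}$; this is the mixed Jacobi identity appropriate for an intertwining operator of type $\binom{\text{$g$-twisted}}{\text{untwisted}\ \ \text{$g$-twisted}}$, and making this explicit clarifies exactly what is being checked.
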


In order to determine the contragredient modules of irreducible $L(k,0)^{\Z_3}$-modules, we racall from \cite{DLY09} that the irreducible $L(k,0)$-modules $L(k,i)(0 \leqslant i \leqslant k)$ can be realized in the module $V_{L^{\perp}}$ of the lattice vertex operator algebra $V_L$, where $L = \mathbb{Z}\alpha_1 + \cdots + \mathbb{Z}\alpha_k$ with $\langle \alpha_i, \alpha_j \rangle = 2\delta_{i,j}$, and $L^{\perp}$ is the dual lattice of $L$. More precisely, the top level of $L(k,i)$ is an $i + 1$ dimensional vector space which is spanned by $\{ v^{i,j} | 0 \leqslant j \leqslant i\}$ and $v^{i,j}$ has the explicit form in $V_{L^{\perp}}$:
    \begin{equation} \label{vi0 vii lattice expression}
        v^{0,0} = \mathbbm{1}, \ \ \ 
        v^{i,0} = \sum_{\substack{I \subseteq \{1, 2, \cdots, k\} \\ |I|=i}} e^{\frac{\alpha_I}{2}}, \ \ \ 
        v^{i,i} = \sum_{\substack{I \subseteq \{1, 2, \cdots, k\} \\ |I|=i}} e^{-\frac{\alpha_I}{2}}, 
    \end{equation}
    \begin{equation}  \label{vij lattice expression}
        v^{i,j} = \sum_{\substack{I \subseteq \{1, 2, \cdots, k\} \\ |I|=i}}\sum_{\substack{J \subseteq I \\ |J|=j}} e^{\frac{\alpha_{I-J}}{2}-\frac{\alpha_J}{2}},
    \end{equation}  
where $\alpha_I = \sum_{r \in I}\alpha_r$ for a subset $I$ of $\{1, 2, \cdots, k\}$, and the vertex operator associated with $e^{\alpha}$, $\alpha \in L^{\perp}$ is defined on $V_{L^{\perp}}$ by 
    \begin{equation}  \label{lattice vertex operator}
        \mathscr{Y}(e^{\alpha},z) = \exp(\sum^{\infty}_{n=1}\frac{\alpha(-n)}{n}z^n)\exp(\sum^{\infty}_{n=1}\frac{\alpha(n)}{-n}z^{-n})e_{\alpha}z^{\alpha(0)}.
    \end{equation}
Moreover, the operator $\mathscr{Y}$ produces the intertwining operator for $V_L$ of type
    $\left(\begin{array}{c}
                 \ V_{\lambda_1+\lambda_2+L}\          \\    
                V_{\lambda_1+L}    \     V_{\lambda_2+L}
      \end{array}\right)$ for $\lambda_1, \lambda_2 \in L^{\perp}$.

\begin{thm}  \label{contragredient mod}
    For $0 \leqslant i \leqslant k$, $j \in \{0, 1, 2\}$, $k \in \Z_{\geqslant 1}$. 
       \begin{enumerate}
            \item
            If $i \in 3\mathbb{Z}$, then $(L(k,i)^j)' \cong L(k,i)^{\overline{-j}}$ as irreducible $L(k,0)^{\Z_3}$-modules.
            \item
            If $i \in 3\mathbb{Z}+1$, then $(L(k,i)^j)' \cong L(k,i)^{\overline{1-j}}$ as irreducible $L(k,0)^{\Z_3}$-modules.
            \item
            If $i \in 3\mathbb{Z}+2$, then $(L(k,i)^j)' \cong L(k,i)^{\overline{2-j}}$ as irreducible $L(k,0)^{\Z_3}$-modules.
            \item
            $(L(k,i)^{T_1, j})' \cong L(k,k-i)^{T_2, j}$ as irreducible $L(k,0)^{\Z_3}$-modules.
       \end{enumerate}
\end{thm}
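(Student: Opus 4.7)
The plan for parts (1)--(3) is to exploit the self-duality $L(k,i)\cong L(k,i)'$ of the irreducible $L(k,0)$-modules. On the top level $L(i\alpha/2)$, the $\mathfrak{sl}_2$-equivariant isomorphism to the dual sends $v^{i,m}$ to a scalar multiple of $(v^{i,i-m})^*$. First I would verify that $\phi(\sigma)v^{i,m}=\xi^{i-m}v^{i,m}$ while $\phi(\sigma)^*(v^{i,j})^*=\xi^{j-i}(v^{i,j})^*$, and then track these eigenvalues through the self-duality: the $\xi^{j}$-eigenspace of $\phi(\sigma)$ inside $L(k,i)$ is mapped onto the $\xi^{\overline{j-i}}$-eigenspace of $\phi(\sigma)^*$ inside $L(k,i)'$. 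Since the dual of an $\xi^{m}$-eigenspace carries eigenvalue $\xi^{-m}$ for the dual action, this latter eigenspace is precisely $(L(k,i)^{\overline{i-j}})'$. Hence $L(k,i)^{j}\cong(L(k,i)^{\overline{i-j}})'$, and taking contragredient yields $(L(k,i)^{j})'\cong L(k,i)^{\overline{i-j}}$. The three cases (1)--(3) follow by specializing the residue of $i$ modulo $3$.

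For part (4), the plan is to split the claim into an $L(k,0)$-module isomorphism $(L(k,i)^{T_1})'\cong L(k,k-i)^{T_2}$ and a subsequent matching of the $\Z_3$-refinements. The contragredient of a $\sigma$-twisted module is $\sigma^{-1}=\sigma^{2}$-twisted, so by the classification it must equal $L(k,l)^{T_2}$ for some $l$. Quantum-dimension preservation, combined with the identity $\sin(\pi(l+1)/(k+2))=\sin(\pi(k-l+1)/(k+2))$, forces $l\in\{i,k-i\}$. The direct computation $a^{(2)}_{k,k-i}-a^{(1)}_{k,i}=0$ and $a^{(2)}_{k,i}-a^{(1)}_{k,i}=\tfrac{2i-k}{12}$ then pins down $l=k-i$, the alternative $l=i$ coinciding with $k-i$ exactly when $2i=k$.

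For the $\Z_3$-refinement, I use that $H^{2}(\Z_3,\C^{*})=0$, so $\phi$ may be taken as a genuine representation with $\phi(\sigma^{2})=\phi(\sigma)^{2}$. The component $(L(k,i)^{T_1,j})'$ sits inside $(L(k,i)^{T_1})'$ as the dual of the $\xi^{j}$-eigenspace of $\phi(\sigma)$, so $\phi(\sigma)^{*}$ acts on it with eigenvalue $\xi^{-j}$ and therefore $\phi(\sigma^{2})$ acts as $\xi^{-2j}=\xi^{j}$. Since $\phi(\sigma^{2})$ acts as $\xi^{j'}$ on $L(k,k-i)^{T_2,j'}$, matching these forces $j'\equiv j\pmod 3$. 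As a cross-check, the conformal weights read off from Table \ref{Table 2} satisfy $\omega(L(k,i)^{T_1,j})-\omega(L(k,k-i)^{T_2,j'})=(j-j')/3$, which is consistent with $j'=j$ because contragredient preserves conformal weight.

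The main obstacle I anticipate is the uniqueness argument at the $L(k,0)$-module level in part (4): when $k\equiv 1\pmod 3$, the conformal-weight equation $a^{(2)}_{k,l}=a^{(1)}_{k,i}$ admits a spurious solution $l=\tfrac{k+2}{3}+i$ in addition to $l=k-i$, and ruling it out requires combining the conformal-weight condition with the fact that $qdim_{L(k,0)}L(k,l)^{T_2}$ is invariant under the involution $l\mapsto k-l$ and no other. One must also verify that the spurious candidate already coincides with $k-i$ in the boundary case $i=\tfrac{k-1}{3}$. The remaining steps reduce to direct eigenvalue computations prepared by the lemmas and tables in Section 3.
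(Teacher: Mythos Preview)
Your argument is correct in outline and reaches the right conclusions, but it differs noticeably from the paper's route.

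For parts (1)--(3) the paper works through the lattice realization of $L(k,i)$ inside $V_{L^{\perp}}$: it uses the explicit intertwining operator $\mathscr{Y}(e^{\alpha},z)$ to see that $\mathbbm{1}$ arises in $\mathscr{Y}(v^{i,j},z)v^{i,i-j}$, and then invokes the fact that $V^{G}\subseteq M\boxtimes M'$ to locate the contragredient. Your approach bypasses the lattice machinery entirely, using only the self-duality $L(k,i)\cong L(k,i)'$ at the $L(k,0)$-level and the elementary observation that on the top level this isomorphism is the unique $\mathfrak{sl}_2$-intertwiner $L(i\alpha/2)\to L(i\alpha/2)^{*}$, hence sends $v^{i,m}\mapsto c_m(v^{i,i-m})^{*}$. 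This is more elementary and more portable. Be aware of two cancelling sign slips in your eigenvalue bookkeeping: the transpose $\phi(\sigma)^{*}$ acts on $(v^{i,m})^{*}$ with eigenvalue $\xi^{i-m}$, not $\xi^{m-i}$, and the graded dual of the $\xi^{m}$-eigenspace carries the same eigenvalue $\xi^{m}$ under the transpose (not $\xi^{-m}$). The two errors cancel and your final identification $(L(k,i)^{j})'\cong L(k,i)^{\overline{i-j}}$ is correct.

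For part (4) your treatment is in fact \emph{more} careful than the paper's. The paper simply cites $a_{k,i}^{(1)}=a_{k,k-i}^{(2)}$ and concludes; it does not address whether some other $l$ could also satisfy $a_{k,l}^{(2)}=a_{k,i}^{(1)}$. Your quantum-dimension argument (which forces $l\in\{i,k-i\}$ via the sine identity) closes exactly that gap. One small correction: the second root of the conformal-weight quadratic is $l=\tfrac{k+2}{3}+2i$, not $\tfrac{k+2}{3}+i$. For the $j$-refinement, your eigenvalue chase through $\phi(\sigma)^{*}$ is workable but delicate because $\phi$ is only a projective representation; the conformal-weight cross-check you give at the end---equivalently, that an isomorphism of admissible $\sigma^{2}$-twisted modules with equal lowest weight preserves the $\tfrac{1}{3}\Z$-grading---is the cleanest justification and is essentially what the paper relies on implicitly.
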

\begin{proof}
    The contragredient module of $L(1,i)^j$and $L(1,i)^{T_r, j} (i, r = 0, 1, j = 0, 1, 2)$ can be easily determined by using the lattice vertex operator algebra $V_{\Z\be}$ in Remark \ref{k=1 V^G iso to V_L}. Indeed, the contragredient module of $V_{\Z\be+\frac{s}{18}\be}$ is $V_{\Z\be+\frac{18-s}{18}\be}$ for any $0 \leqslant s < 18$.   
    
    Next, we consider the case of $k>1$.
    A basic fact is that if $V$ is a selfdual vertex operator algebra, $(M, Y_M)$ is a $V$-module and $(M', Y_{M'})$ is the contragredient module of $M$, then $V \subseteq M \boxtimes_V M'$. 
    From Theorem \ref{VG prop}, we know that $L(k,0)^{\Z_3}$ is a selfdual vertex operator algebra.
    Note that $v^{i,i-j} \in L(k,i)^{\overline{j}} (j = 0, 1, 2)$ for any $2 \leqslant i \leqslant k$. 
    Since 
        \[v^{0,0} = \1 \in L(k,0)^0 = L(k,0)^{\Z_3} \subseteq L(k,i)^j \boxtimes_{L(k,0)^{\Z_3}} (L(k,i)^j)',\]
    by using (\ref{vi0 vii lattice expression})-(\ref{lattice vertex operator}), we can deduce that $\mathbbm{1}$ can be obtained from $\mathscr{Y}(v^{i,j},z)(v^{i,i-j})$, where $\mathscr{Y}$ is the nonzero intertwining operator for $V_L$ of type 
        $\left(\begin{array}{c}
                 \ V_{\lambda_1+\lambda_2+L}\          \\    
                V_{\lambda_1+L}    \     V_{\lambda_2+L}
               \end{array}\right)$ 
        for $\lambda_1, \lambda_2 \in L^{\perp}$.
    This implies that $v^{i,0} \in (L(k,i)^0)'$ for any $0 \leqslant i \leqslant k$, $v^{i,1} \in (L(k,i)^1)'$ for any $1 \leqslant i \leqslant k$, and $v^{i,2} \in (L(k,i)^2)'$ for any $2 \leqslant i \leqslant k$. 
    It is observed that 
        \[  i!v^{i,0} = e(0)^iv^{i,i}, \ \ (i-1)!v^{i,1} = e(0)^{i-1}v^{i,i}, \ \ (i-2)!v^{i,2} = e(0)^{i-2}v^{i,i}.\]
    Thus $v^{i,0} \in L(k,i)^0$ if $i \in 3\mathbb{Z}$, $v^{i,0} \in L(k,i)^1$ if $i \in 3\mathbb{Z}+1$ and $v^{i,0} \in L(k,i)^2$ if $i \in 3\mathbb{Z}+2$. As a result, $L(k,i)^0$ is selfdual if $i \in 3\mathbb{Z}$, $(L(k,i)^0)' \cong L(k,i)^1$ if $i \in 3\mathbb{Z}+1$ and $(L(k,i)^0)' \cong  L(k,i)^2$ if $i \in 3\mathbb{Z}+2$. Other contragredient modules in 1-3 could be proved using similar arguments. 

    Next we prove $(L(k,i)^{T_1, j})' \cong L(k,k-i)^{T_2, j}$. From the definition of contragredient module, we know that any $g$-twisted $V$-module $M$ and its contragredient module $M'$ ($g^{-1}$-twisted $V$-module) have the same lowest weight. Note that $a_{k,i}^{(1)} = a_{k,k-i}^{(2)}$, where $a_{k,i}^{(r)} ( r = 1, 2)$ is the conformal weight of $L(k,i)^{T_r}$ defined in Lemma \ref{lw a_kir}. Therefore, $(L(k,i)^{T_1, j})' \cong L(k,k-i)^{T_2, j}$ holds for any $0 \leqslant i \leqslant k, j \in \{0, 1, 2\}$.
\end{proof}

\begin{lem}  \label{h^{(3)} h^{(4)} L(k,0)-iso}
    For $0 \leqslant i \leqslant k$, we have the following $L(k,0)$-isomorphisms.
    \begin{enumerate}
    	\item
        $(L(k,i)^{T_1}, Y_{\si}(\Delta(h^{(1)}, z)\cdot, z)) \cong (L(k, i)^{T_2}, Y_{\si^2}(\cdot, z))$;
        \item
        $(L(k,i), Y(\Delta(h^{(3)}, z)\cdot, z)) \cong (L(k, k-i), Y(\cdot, z))$;
        \item
        $(L(k,i), Y(\Delta(h^{(4)}, z)\cdot, z)) \cong (L(k, k-i)^{T_1}, Y_{\si}(\cdot, z))$.
    \end{enumerate}
\end{lem}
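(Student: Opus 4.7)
The plan is to apply Li's $\Delta$-operator construction and identify each resulting module by its conformal weight and highest weight, combined with the classification of irreducible (twisted) $L(k,0)$-modules already established in Section 3. The common technical tool is the composition identity
\[
    \Delta(h^{(a)}, z) \Delta(h^{(b)}, z) = \Delta(h^{(a+b)}, z),
\]
which holds because the only nonzero bracket $[h^{(a)}(n), h^{(b)}(m)]$ is a scalar multiple of $\delta_{n+m,0}$, so the positive-mode factors commute and the ordinary Campbell--Baker--Hausdorff collapses. In particular $\Delta(h^{(1)}, z)^{r} = \Delta(h^{(r)}, z)$, as already recorded in the paper.

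For (1), the preceding lemma states $Y_\sigma(v, z) = Y(\Delta(h^{(1)}, z) v, z)$, so
\[
    Y_\sigma(\Delta(h^{(1)}, z) u, z) = Y(\Delta(h^{(1)}, z)^2 u, z) = Y(\Delta(h^{(2)}, z) u, z) = Y_{\sigma^2}(u, z),
\]
and the identity map on the underlying space $L(k, i)$ is the required $L(k, 0)$-module isomorphism. For (2), since $h^{(3)} = h/2$ has only integer $h(0)$-eigenvalues on $L(k,0)$, the automorphism $e^{2\pi \sqrt{-1} h^{(3)}(0)}$ equals $\mathrm{id}$, so Li's theorem produces an irreducible \emph{untwisted} $L(k,0)$-module. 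I will identify it by computing the action on $v^{i,i}$. The derivations of (3.3)--(3.6) extend verbatim to $r = 3$, so that the new operators on $L(k, i)$ are $L^{\mathrm{new}}(0) = L(0) + h^{(3)}(0) + k/4$, $h^{\mathrm{new}}(0) = h(0) + k$, $e^{\mathrm{new}}(n) = e(n+1)$, and $f^{\mathrm{new}}(n) = f(n-1)$. A short computation gives
\[
    L^{\mathrm{new}}(0) v^{i, i} = \Bigl(\tfrac{i(i+2)}{4(k+2)} - \tfrac{i}{2} + \tfrac{k}{4}\Bigr) v^{i, i} = \tfrac{(k-i)(k-i+2)}{4(k+2)} v^{i, i},
    \qquad h^{\mathrm{new}}(0) v^{i, i} = (k-i) v^{i, i},
\]
and $v^{i, i}$ is annihilated by all positive $\widehat{\mathfrak{sl}_2}$-modes of the new action. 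Hence $v^{i, i}$ is an integrable highest weight vector of weight $k - i$ at level $k$, so by the classification the module is $L(k, k - i)$.

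For (3), I use the factorization $\Delta(h^{(4)}, z) = \Delta(h^{(1)}, z) \Delta(h^{(3)}, z)$ coming from $h^{(4)} = h^{(1)} + h^{(3)}$. Setting $Y'(\cdot, z) := Y(\Delta(h^{(3)}, z) \cdot, z)$, part (2) identifies $(L(k, i), Y') \cong L(k, k-i)$; then further applying the $\Delta(h^{(1)}, z)$-construction, which by the definition just before Lemma 3.6 produces the $\sigma$-twisted module $L(k, k-i)^{T_1}$, gives the claim. The main obstacle is the composition identity for $\Delta$-operators, which is the pivot for all three parts; it must be justified from the $\widehat{\mathfrak{sl}_2}$-commutation relations as indicated above, after which each isomorphism follows either by a formal rewriting (parts (1) and (3)) or by matching weight data against the classification (part (2)).
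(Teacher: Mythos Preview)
Your proposal is correct and follows essentially the same approach as the paper: the composition identity $\Delta(h^{(r)},z)=\Delta(h^{(1)},z)^r$ (already recorded before Lemma 3.6) handles parts (1) and (3) by formal rewriting, while part (2) is settled by identifying the irreducible untwisted module via its conformal weight and highest weight data. Your treatment of (2) is in fact slightly more explicit than the paper's, which simply notes $a_{k,i}^{(3)}=a_{k,k-i}^{(0)}$ and asserts that $\psi_1:v^{i,i}\mapsto v^{k-i,0}$ is an $L(k,0)$-isomorphism; your verification that $v^{i,i}$ is a genuine highest weight vector under the new action is exactly what justifies that assertion.
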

\begin{proof}
    Note that $\Delta(h^{(r)}, z)=\Delta(h^{(1)}, z)^r$ for $r \in \Z_{\geqslant 0}$, and therefore the assertion 1 is obvious. From the Lemma 2.6 in \cite{DLM96-1}, we know that $(L(k,i), Y(\Delta(h^{(3)}, z)\cdot, z))$ is an irreducible $L(k,0)$-module with the eigenvalue $a_{k,i}^{(3)}$ of the operator $L_0^{(3)}$ on $v^{i,i}$ defined in Lemma \ref{lw a_kir}. It is observed that $a_{k,i}^{(3)} = a_{k,k-i}^{(0)}$. It is easy to verify that the map $\psi_1$ defined by
    \begin{align} \label{i3 iso (k-i)0}
        \psi_1 : (L(k,i), Y(\Delta(h^{(3)}, z)\cdot, z)) & \longrightarrow (L(k, k-i), Y(\cdot, z)) \\
                              v^{i,i}                  & \longmapsto         v^{k-i,0}.
    \end{align}
    is an $L(k,0)$-isomorphism. 
    Then we can deduce that $(L(k,i), Y(\Delta(h^{(3)}, z)\cdot, z)) \cong (L(k, k-i), Y(\cdot, z))$ as an irreducible $L(k,0)$-isomorphism, i.e., the assertion 2 holds. 
    
    Finally, the assertion 3 is immediate by using the assertion 2:
    \begin{align*}    
        (L(k,i), Y(\Delta(h^{(4)}, z)\cdot, z)) & \cong (L(k, i), Y(\Delta(h^{(1)}, z)\Delta(h^{(3)}, z)\cdot, z))  \\
        &  \cong  (L(k, k-i), Y(\Delta(h^{(1)}, z)\cdot, z)) \\ &  \cong  (L(k, k-i)^{T_1}, Y_{\si}(\cdot, z)). 
    \end{align*} 
    Moreover, we can also construct a $\si$-twisted $L(k,0)$-isomorphism 
    \begin{align} 
        \psi_2 : (L(k,i), Y(\Delta(h^{(4)}, z)\cdot, z)) & \longrightarrow (L(k, k-i)^{T_1}, Y_{\si}(\cdot, z)) \\
                              v^{i,i}                 & \longmapsto    v^{k-i,0}.      \label{i^4 iso (k-i)^1}
    \end{align} 
\end{proof}

The following corollary is clear by noting that $\Delta(h^{(r)}, z)=\Delta(h^{(1)}, z)^r$ for $r \in \Z_{\geqslant 0}$.

\begin{cor}
    For $0 \leqslant i \leqslant k$, we have the following $L(k,0)$-isomorphisms.
    \begin{enumerate}
        \item
        $(L(k,i)^{T_1}, Y_{\si}(\Delta(h^{(2)}, z)\cdot, z)) \cong (L(k, k-i), Y(\cdot, z))$;
        \item
        $(L(k,i)^{T_2}, Y_{\si^2}(\Delta(h^{(1)}, z)\cdot, z)) \cong (L(k, k-i), Y(\cdot, z))$;
        \item
        $(L(k,i)^{T_2}, Y_{\si^2}(\Delta(h^{(2)}, z)\cdot, z)) \cong (L(k, k-i)^{T_1}, Y_{\si}(\cdot, z))$.
    \end{enumerate}
\end{cor}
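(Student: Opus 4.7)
The plan is to reduce each of the three isomorphisms to the preceding Lemma \ref{h^{(3)} h^{(4)} L(k,0)-iso} by combining the multiplicative identity $\Delta(h^{(r)}, z) = \Delta(h^{(1)}, z)^r$ (flagged in the statement) with the defining identity $Y_{\sigma^r}(\cdot, z) = Y(\Delta(h^{(r)}, z)\cdot, z)$ that realizes $L(k,i)^{T_r}$. The strategy in every case is the same: unfold the outer twisted vertex operator into an untwisted one using a $\Delta(h^{(1)}, z)$ or $\Delta(h^{(2)}, z)$ factor, collapse the product of $\Delta$'s by multiplicativity, and then quote the relevant part of the earlier lemma.

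Concretely, for assertion 1 I would compute
\[
Y_{\sigma}(\Delta(h^{(2)}, z)v, z) = Y(\Delta(h^{(1)}, z)\Delta(h^{(2)}, z)v, z) = Y(\Delta(h^{(3)}, z)v, z),
\]
so that assertion 2 of Lemma \ref{h^{(3)} h^{(4)} L(k,0)-iso} identifies the triple $(L(k,i), Y_{\sigma}(\Delta(h^{(2)}, z)\cdot, z))$ with $(L(k,k-i), Y(\cdot, z))$. For assertion 2 the symmetric unfolding
\[
Y_{\sigma^2}(\Delta(h^{(1)}, z)v, z) = Y(\Delta(h^{(2)}, z)\Delta(h^{(1)}, z)v, z) = Y(\Delta(h^{(3)}, z)v, z)
\]
again puts us in the hypothesis of assertion 2 of that lemma, giving the same target $L(k,k-i)$.

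For assertion 3, the analogous unfolding yields
\[
Y_{\sigma^2}(\Delta(h^{(2)}, z)v, z) = Y(\Delta(h^{(2)}, z)\Delta(h^{(2)}, z)v, z) = Y(\Delta(h^{(4)}, z)v, z),
\]
which matches the hypothesis of assertion 3 of Lemma \ref{h^{(3)} h^{(4)} L(k,0)-iso}, producing the isomorphism with $(L(k,k-i)^{T_1}, Y_{\sigma}(\cdot, z))$.

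I do not expect any genuine obstacle here: once multiplicativity is used, the corollary is a one-line consequence of Lemma \ref{h^{(3)} h^{(4)} L(k,0)-iso}. If an explicit isomorphism is wanted rather than an abstract one, the maps $\psi_1$ and $\psi_2$ constructed in the proof of that lemma, each sending $v^{i,i}$ to $v^{k-i,0}$, transfer through the identifications above and supply the desired intertwiners verbatim.
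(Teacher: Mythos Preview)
Your proposal is correct and follows exactly the approach the paper takes: the paper simply states that the corollary is clear from the multiplicativity $\Delta(h^{(r)}, z)=\Delta(h^{(1)}, z)^r$, and your unfolding of each $Y_{\sigma^r}$ into $Y(\Delta(h^{(r)},z)\cdot,z)$ followed by an appeal to the relevant part of Lemma \ref{h^{(3)} h^{(4)} L(k,0)-iso} is precisely the intended one-line argument made explicit.
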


\begin{lem} \label{L(k,i)^Trj iso}
    For $0 \leqslant i \leqslant k$, $j=0, 1, 2$, we have the following $L(k,0)^{\Z_3}$-isomorphisms.
    \begin{enumerate}
    	\item
        $(L(k,i)^j, Y(\Delta(h^{(1)}, z)\cdot, z)) \cong (L(k, i)^{T_1,j}, Y_{\si}(\cdot, z))$;
        \item
        $(L(k,i)^j, Y(\Delta(h^{(2)}, z)\cdot, z)) \cong (L(k, i)^{T_2,\overline{-j}}, Y_{\si^2}(\cdot, z))$;
        \item
        $(L(k,i)^{T_1, j}, Y_{\si}(\Delta(h^{(1)}, z)\cdot, z)) \cong (L(k, i)^{T_2,\overline{-j}}, Y_{\si^2}(\cdot, z))$;
        \item
        $(L(k,i)^j, Y(\Delta(h^{(3)}, z)\cdot, z)) \cong (L(k, k-i)^{\overline{j+k-i}}, Y(\cdot, z))$;
        \item
        $(L(k,i)^j, Y(\Delta(h^{(4)}, z)\cdot, z)) \cong (L(k, k-i)^{T_1,\overline{j+k-i}}, Y_{\si}(\cdot, z))$;
        \item
        $(L(k, i)^{T_2,j}, Y_{\si^2}(\Delta(h^{(1)}, z)\cdot, z)) \cong (L(k, k-i)^{\overline{-j+k-i}}, Y(\cdot, z))$;
        \item
        $(L(k, i)^{T_2,j}, Y_{\si^2}(\Delta(h^{(2)}, z)\cdot, z)) \cong (L(k, k-i)^{T_1,\overline{-j+k-i}}, Y_{\si}(\cdot, z))$.
    \end{enumerate}
\end{lem}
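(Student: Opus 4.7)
The plan is to lift each of the seven isomorphisms from the $L(k,0)$-level, already recorded in Lemma \ref{h^{(3)} h^{(4)} L(k,0)-iso} and its subsequent Corollary, to the $L(k,0)^{\Z_3}$-level by matching the $\Z_3$-grading labels on the two sides. The decisive input is a single unified formula for the projective action $\phi_r(\sigma)$ on the common vector space $L(k,i)$ across all three module pictures.

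First I would compute $\phi_r(\sigma)$ explicitly. For $r\in\{1,2\}$, $\phi_r(\sigma)$ acts as $\xi^j$ on $L(k,i)^{T_r,j}$ and therefore coincides with $e^{2\pi\sqrt{-1}(L_0^{(r)}-a_{k,i}^{(r)})}$. Using $L_0^{(r)}=L(0)+h^{(r)}(0)+\frac{r^2k}{36}$ obtained from (\ref{twist-h3}) together with the formula for $a_{k,i}^{(r)}$ in Lemma \ref{lw a_kir}, one checks on a weight vector $v\in L(k,i)$ of $h(0)$-weight $\mu$ and $L(0)$-weight $w\equiv a_{k,i}^{(0)}\pmod\Z$ that
\[\phi_r(\sigma)(v)=\xi^{r(\mu+i)/2}v,\]
with $(\mu+i)/2\in\Z$ since $\mu\equiv i\pmod 2$. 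On the untwisted side, extending (\ref{phi(sigma 0)})--(\ref{phi(sigma 2)}) $L(k,0)$-equivariantly and using the auxiliary congruence $w(X)\equiv h(X)\pmod 3$ for every monomial $X$ in the modes of $\widehat{\mathfrak{sl}_2}$ (immediate from $h\in V^0$, $e\in V^2$, $f\in V^1$), one obtains $\phi_0(\sigma)(v)=\xi^{-(\mu+i)}v$. Since $\frac{3(\mu+i)}{2}\equiv 0\pmod 3$, this yields the identities $\phi_1(\sigma)=\phi_0(\sigma)$ and $\phi_2(\sigma)=\phi_0(\sigma)^{-1}$ as linear operators on $L(k,i)$, whence
\[L(k,i)^j=L(k,i)^{T_1,j}=L(k,i)^{T_2,\overline{-j}}\quad\text{as subspaces of }L(k,i).\]

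With this subspace identification in hand, items 1 and 2 follow from the tautological module equalities $(L(k,i),Y(\Delta(h^{(r)},z)\cdot,z))=(L(k,i)^{T_r},Y_{\sigma^r}(\cdot,z))$, and item 3 follows from $Y_\sigma(\Delta(h^{(1)},z)\cdot,z)=Y(\Delta(h^{(1)},z)^2\cdot,z)=Y_{\sigma^2}(\cdot,z)$ combined with item 2. For items 4 and 5 I would invoke the explicit isomorphisms $\psi_1$ and $\psi_2$ of Lemma \ref{h^{(3)} h^{(4)} L(k,0)-iso}, both of which send $v^{i,i}\mapsto v^{k-i,0}$. Applying Schur's lemma to the irreducible $L(k,0)$-modules on both sides, one finds $\psi_r\circ\phi_{\mathrm{LHS}}(\sigma)=c\cdot\phi_{\mathrm{RHS}}(\sigma)\circ\psi_r$ for a single scalar $c$, and evaluating at $v^{i,i}$ (with $\phi_{\mathrm{LHS}}(\sigma)(v^{i,i})=v^{i,i}$) together with the computations $\phi_{\mathrm{RHS}}(\sigma)(v^{k-i,0})=\xi^{k-i}v^{k-i,0}$ in $L(k,k-i)$ (and the parallel $L_0^{(1)}$-level calculation placing $v^{k-i,0}\in L(k,k-i)^{T_1,\overline{k-i}}$) pins down $c=\xi^{-(k-i)}$. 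Consequently $\psi_r$ sends the $\xi^j$-eigenspace $L(k,i)^j$ to the $\xi^{j+k-i}$-eigenspace on the target, producing the predicted shift $\overline{j+k-i}$. Items 6 and 7 then reduce to items 4 and 5 by first rewriting $Y_{\sigma^2}(\Delta(h^{(s)},z)\cdot,z)=Y(\Delta(h^{(s+2)},z)\cdot,z)$ and then invoking the subspace identity $L(k,i)^{T_2,j}=L(k,i)^{\overline{-j}}$ obtained above.

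The main obstacle is the bookkeeping across the seven sub-cases; the cleanest path is the uniform formula $\phi_r(\sigma)(v)=\xi^{r(\mu+i)/2}v$ together with the Schur-lemma determination of the $2$-cocycle scalar $c=\xi^{-(k-i)}$. With these two computations secured, each of the seven isomorphisms reduces to a routine check at a single lowest-weight vector.
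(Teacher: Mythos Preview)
Your approach is correct and conceptually close to the paper's, but organized differently. The paper argues assertion by assertion using single test vectors: for item 1 it simply observes $v^{i,i-j}\in L(k,i)^j\cap L(k,i)^{T_1,j}$ and invokes irreducibility of both $L(k,0)^{\Z_3}$-submodules (citing \cite{DLM96-1} Lemma 2.6) to conclude they coincide; item 2 is the parallel observation $v^{i,i-j}\in L(k,i)^j\cap L(k,i)^{T_2,\overline{-j}}$; item 4 tracks $\psi_1(v^{i,i})=v^{k-i,0}\in L(k,k-i)^{\overline{k-i}}$ and reads off the shift; items 3, 5, 6, 7 are deduced formally by composing the earlier ones. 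Your route instead establishes the global operator identity $\phi_0(\sigma)=\phi_1(\sigma)=\phi_2(\sigma)^{-1}$ on the vector space $L(k,i)$ via the uniform formula $\phi_r(\sigma)(v)=\xi^{r(\mu+i)/2}v$, which yields the subspace identifications $L(k,i)^j=L(k,i)^{T_1,j}=L(k,i)^{T_2,\overline{-j}}$ in one stroke, and then handles items 4--7 by a Schur-lemma cocycle computation. The paper's method is more economical (one vector per case); yours is more systematic and makes the underlying $\Z_3$-grading on $L(k,i)$ transparent. One point you should make explicit: for items 4--7 you are tacitly using that the original $\phi_0(\sigma)$ on $L(k,i)$ remains a $\sigma$-intertwiner for the modified module structure $Y(\Delta(h^{(r)},z)\cdot,z)$; this holds because $\sigma$ fixes $h$ and hence commutes with $\Delta(h^{(r)},z)$, but it is precisely the hinge that lets you identify $\phi_{\mathrm{LHS}}(\sigma)$ with $\phi_0(\sigma)$ and read off its eigenspaces as the $L(k,i)^j$.
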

\begin{proof}
    For $k=1$, these isomorphisms can be easily confirmed by using the lattice vertex operator algebra $V_{\Z\be}$ in Remark \ref{k=1 V^G iso to V_L}. 
    Now we prove the case of $k>1$.

    We first show the assertion 1. From the Lemma 2.6 in \cite{DLM96-1}, we know that $(L(k,i)^{j}$, $Y(\Delta(h^{(1)}, z)\cdot, z))$ is an irreducible $L(k,0)^{\Z_3}$-module. Since $L(k,i)=L(k,i)^{T_1}$ as vector spaces and $Y(\Delta(h^{(1)}, z)\cdot, z)=Y_{\si}(\cdot, z)$ as vertex operators on $L(k,i)$, it follows from $v^{i,i-j} \in L(k,i)^j \cap L(k,i)^{T_1,j}$ that $L(k,i)^{j} = L(k, i)^{T_1,j}$. Then we obtain the assertion 1.
    
    The assertion 2 can be proved by using similar arguments. Just note that $v^{i,i-j} \in L(k,i)^j \cap L(k,i)^{T_2,\overline{-j}}$. Then we obtain the assertion 2.

    The assertion 3 follows from the assertion 1 and 2.

    Next, we show the assertion 4. Recall the $L(k,0)$-isomorphism $\psi_1$ defined in (\ref{i3 iso (k-i)0}) and the fact that $a_{k,i}^{(3)} = a_{k,k-i}^{(0)}$. Moreover, $v^{k-i,0} \in L(k,k-i)^0$ if $k-i \in 3\mathbb{Z}$, $v^{k-i,0} \in L(k,k-i)^1$ if $k-i \in 3\mathbb{Z}+1$ and $v^{k-i,0} \in L(k,k-i)^2$ if $k-i \in 3\mathbb{Z}+2$. 
    Then we can deduce that 
        \begin{align*}
        (L(k,i)^j, Y(\Delta(h^{(3)}, z)\cdot, z)) & \cong (L(k, k-i)^j, Y(\cdot, z)), \ \ \ \ \ \text{if} \ \  k-i \in 3\mathbb{Z},   \\ 
        (L(k,i)^j, Y(\Delta(h^{(3)}, z)\cdot, z)) & \cong (L(k, k-i)^{\overline{j+1}}, Y(\cdot, z)), \ \   \text{if} \ \  k-i \in 3\mathbb{Z}+1,  \\
        (L(k,i)^j, Y(\Delta(h^{(3)}, z)\cdot, z)) & \cong (L(k, k-i)^{\overline{j+2}}, Y(\cdot, z)), \ \   \text{if} \ \  k-i \in 3\mathbb{Z}+2.
        \end{align*}
    This proves the assertion 4.


    Finally, the assertion 5 follows from the assertion 1 and 4, the assertion 6 follows from the assertion 2 and 4, and the assertion 7 follows from the assertion 2 and 5. 
\end{proof}

For $j_1, j_2 \in \Z$, $0 \leqslant i_1, i_2, i_3 \leqslant k$, such that $i_1 + i_2 + i_3 \in 2\mathbb{Z}$, $i_1 + i_2 + i_3 \leqslant 2k$, we define 
    \begin{equation}
        sign(i_1, i_2, i_3, j_1, j_2) =
            \begin{cases}
                j_1 + j_2,  & \text{if}  \quad  \frac{1}{2}(i_1 + i_2 - i_3) \in 3\mathbb{Z}, \\ 
                j_1 + j_2 - 1,  & \text{if}  \quad  \frac{1}{2}(i_1 + i_2 - i_3) \in 3\mathbb{Z}+1, \\
                j_1 + j_2 - 2,  & \text{if}  \quad  \frac{1}{2}(i_1 + i_2 - i_3) \in 3\mathbb{Z}+2.
            \end{cases}
    \end{equation}

Now we are in a position to determine the fusion rules for all the irreducible $L(k,0)^{\Z_3}$-modules. For the irreducible $L(k,0)^{\Z_3}$-modules $W^1$ and $W^2$, we drop the subscript $L(k,0)^{\Z_3}$ in the fusion product $W^1 \boxtimes_{L(k,0)^{\Z_3}} W^2$ and simply denote $W^1 \boxtimes W^2$ without causing confusion. The following theorem together with Proposition \ref{fusionsymm.} and Theorem \ref{contragredient mod} give all the fusion rules for the $\mathbb{Z}_3$-orbifold vertex operator algebra $L(k,0)^{\Z_3}$.

\begin{thm}  \label{fusion rules v^Z3}
The fusion rules for the $\mathbb{Z}_3$-orbifold affine vertex operator algebra $L(k,0)^{\Z_3}$ are as follows:
    \begin{equation}  \label{fusion T0 T0}
        L(k,i_1)^{j_1} \boxtimes L(k,i_2)^{j_2} = \sum_{\substack{|i_1-i_2| \leqslant i_3 \leqslant i_1 + i_2 \\ i_1 + i_2 + i_3 \in 2\Z \\ i_1 + i_2 + i_3 \leqslant 2k}} L(k,i_3)^{\overline{sign(i_1, i_2, i_3, j_1, j_2)}}, 
    \end{equation}
    \begin{equation} \label{fusion T0 T1}
        L(k,i_1)^{j_1} \boxtimes L(k,i_2)^{T_1,j_2} = \sum_{\substack{|i_1-i_2| \leqslant i_3 \leqslant i_1 + i_2 \\ i_1 + i_2 + i_3 \in 2\Z \\ i_1 + i_2 + i_3 \leqslant 2k}} L(k,i_3)^{T_1,\overline{sign(i_1, i_2, i_3, j_1, j_2)}}, 
    \end{equation}
    \begin{equation} \label{fusion T0 T2}
        L(k,i_1)^{j_1} \boxtimes L(k,i_2)^{T_2,j_2} = \sum_{\substack{|i_1-i_2| \leqslant i_3 \leqslant i_1 + i_2 \\ i_1 + i_2 + i_3 \in 2\Z \\ i_1 + i_2 + i_3 \leqslant 2k}} L(k,i_3)^{T_2,\overline{-sign(i_1, i_2, i_3, j_1, -j_2)}}, 
    \end{equation}
    \begin{equation} \label{fusion T1 T1}
        L(k,i_1)^{T_1,j_1} \boxtimes L(k,i_2)^{T_1,j_2} = \sum_{\substack{|i_1-i_2| \leqslant i_3 \leqslant i_1 + i_2 \\ i_1 + i_2 + i_3 \in 2\Z \\ i_1 + i_2 + i_3 \leqslant 2k}} L(k,i_3)^{T_2,\overline{-sign(i_1, i_2, i_3, j_1, j_2)}}, 
    \end{equation}
    \begin{equation}  \label{fusion T1 T2}
        L(k,i_1)^{T_1,j_1} \boxtimes L(k,i_2)^{T_2,j_2} = \sum_{\substack{|i_1-i_2| \leqslant i_3 \leqslant i_1 + i_2 \\ i_1 + i_2 + i_3 \in 2\Z \\ i_1 + i_2 + i_3 \leqslant 2k}} L(k,k-i_3)^{\overline{sign(i_1, i_2, i_3, j_1, -j_2)+k-i_3}}, 
    \end{equation}
    \begin{equation}  \label{fusion T2 T2}
        L(k,i_1)^{T_2,j_1} \boxtimes L(k,i_2)^{T_2,j_2} = \sum_{\substack{|i_1-i_2| \leqslant i_3 \leqslant i_1 + i_2 \\ i_1 + i_2 + i_3 \in 2\Z \\ i_1 + i_2 + i_3 \leqslant 2k}} L(k,k-i_3)^{T_1,\overline{sign(i_1, i_2, i_3, -j_1, -j_2)+k-i_3}}, 
    \end{equation}
where $0 \leqslant i_1, i_2, i_3 \leqslant k$, $j_1, j_2 \in \{0, 1, 2\}$. 
\end{thm}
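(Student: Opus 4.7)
The plan is to combine four ingredients: the $L(k,0)$-fusion rules of \cite{TK88} recalled above, the restriction injection for intertwining operators from \cite{ADL05} applied with $V=L(k,0)$ and $U=L(k,0)^{\Z_3}$, the multiplicativity of quantum dimensions together with the values computed earlier in this section, and Proposition~2.8 of \cite{DLM96-1}, which states that if $W=M_1\boxtimes_V M_2$ for $g_i$-twisted $V$-modules $M_i$ then $\widetilde{W}=M_1\boxtimes_V\widetilde{M_2}$, where $\widetilde{M_2}$ is obtained from $M_2$ by $\Delta(h^{(r)},z)$-conjugation. Throughout, Lemma~\ref{intertwin Y(si^r)} and Lemma~\ref{L(k,i)^Trj iso} will keep track of how $\Delta$-twisting interacts with the $\mathbb{Z}_3$-eigenspace labels.

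For (\ref{fusion T0 T0}), \cite{ADL05} gives an injection of $I_{L(k,0)}\left(\begin{array}{c}L(k,i_3)\\L(k,i_1)\ L(k,i_2)\end{array}\right)$ into $I_{L(k,0)^{\Z_3}}\left(\begin{array}{c}L(k,i_3)\\L(k,i_1)^{j_1}\ L(k,i_2)^{j_2}\end{array}\right)$. Evaluating a nonzero $L(k,0)$-intertwiner on the lowest-weight vectors $v^{i_1,i_1-j_1}\otimes v^{i_2,i_2-j_2}$ produces a leading-order term proportional to some $v^{i_3,j_3'}\in L(k,i_3)$ with $j_3'=(i_1+i_2+i_3)/2-j_1-j_2$ by $h(0)$-weight matching; since $v^{i_3,j_3'}$ lies in $L(k,i_3)^{j_3}$ with $j_3\equiv i_3-j_3'\equiv j_1+j_2-\tfrac{i_1+i_2-i_3}{2}\pmod 3$, the entire image is confined to that single eigenspace, which is precisely the stated sign function. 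This yields a lower bound of at least $1$ for the multiplicity of each predicted $L(k,i_3)^{j_3}$; the identity $qdim(L(k,i_1))\cdot qdim(L(k,i_2))=\sum_{i_3}qdim(L(k,i_3))$, combined with the product formula for orbifold quantum dimensions and positivity of $qdim$, then forces every multiplicity to be exactly $1$ and rules out any other summands.

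For (\ref{fusion T0 T1}) and (\ref{fusion T0 T2}) the same template applies once Lemma~\ref{intertwin Y(si^r)} is used to convert a nonzero untwisted $L(k,0)$-intertwiner into a $\sigma^r$-twisted one of type $\left(\begin{array}{c}L(k,i_3)^{T_r}\\L(k,i_1)\ L(k,i_2)^{T_r}\end{array}\right)$; the precise $j_3$-label of the output is read off from the eigenvalue shifts recorded in Lemma~\ref{L(k,i)^Trj iso}. The twisted--twisted cases (\ref{fusion T1 T1})--(\ref{fusion T2 T2}) are handled by the $\Delta$-twist reduction announced in the introduction. Taking (\ref{fusion T1 T1}) as representative, set $W=L(k,i_1)^{T_1}\boxtimes_{L(k,0)} L(k,i_2)^{T_1}$, a $\sigma^2$-twisted $L(k,0)$-module; conjugating the second factor by $\Delta(h^{(2)},z)$ converts $L(k,i_2)^{T_1}$ to the untwisted module $L(k,k-i_2)$ by the corollary following Lemma~\ref{h^{(3)} h^{(4)} L(k,0)-iso}, and Proposition~2.8 of \cite{DLM96-1} identifies the correspondingly conjugated $\widetilde{W}$ with $L(k,i_1)^{T_1}\boxtimes_{L(k,0)} L(k,k-i_2)$, which has been computed in the T$_0$--T$_1$ case. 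Undoing the conjugation using Lemma~\ref{L(k,i)^Trj iso} then recovers $W$ as the direct sum of modules of type $L(k,i_3)^{T_2,\cdot}$ claimed in (\ref{fusion T1 T1}). The cases (\ref{fusion T1 T2}) and (\ref{fusion T2 T2}) are analogous, using $\Delta(h^{(1)},z)$ on the second factor to reduce to an already-settled mixed fusion.

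The main obstacle is the bookkeeping of the $\mathbb{Z}_3$-eigenspace label throughout the untwist--fuse--retwist cycle. The sign functions encode a nontrivial interaction between the shift $\tfrac{i_1+i_2-i_3}{2}\pmod 3$ and the relabeling $j\mapsto\overline{j+k-i}$ forced by the charge conjugation $i\leftrightarrow k-i$ appearing in Lemma~\ref{L(k,i)^Trj iso}. Verifying that the composition of these two effects reproduces the stated formulas in each of (\ref{fusion T1 T1})--(\ref{fusion T2 T2}) will require a careful case analysis based on the residues of $(i_1+i_2-i_3)/2$ and $k-i_3$ modulo $3$, and is the main technical heart of the proof.
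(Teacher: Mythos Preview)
Your overall strategy---establish a base case by restricting $L(k,0)$-intertwiners and invoking the quantum-dimension product formula, then bootstrap the remaining cases via Proposition~2.8 of \cite{DLM96-1} together with Lemma~\ref{L(k,i)^Trj iso}---is exactly the paper's approach. Two differences of execution are worth pointing out.

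First, the paper treats (\ref{fusion T0 T1}) as the base case rather than (\ref{fusion T0 T0}): it computes $\mathscr{Y}_{\sigma}(v^{i_1,i_1},z)v^{i_2,i_2}=z^{-i_1/6}\mathscr{Y}(v^{i_1,i_1},z)v^{i_2,i_2}$ and reads off the label $j_3$ from the conformal-weight congruence $\frac{i_1+i_2-i_3}{6}-\frac{j_1+j_2-j_3}{3}\in\Z$, then gets (\ref{fusion T0 T0}) for free since $v^{i,i-j}\in L(k,i)^j$ iff $v^{i,i-j}\in L(k,i)^{T_1,j}$. Your $h(0)$-weight-matching argument for (\ref{fusion T0 T0}) is correct and morally equivalent, but the paper's order saves a step.

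Second, and more substantively, for (\ref{fusion T1 T1}) the paper applies $\Delta(h^{(1)},z)$ to the \emph{untwisted} factor in the already-known product $L(k,i_2)^{T_1,j_2}\boxtimes L(k,i_1)^{j_1}$, sending $L(k,i_1)^{j_1}\to L(k,i_1)^{T_1,j_1}$ and each $L(k,i_3)^{T_1,j_3}\to L(k,i_3)^{T_2,\overline{-j_3}}$ via parts~1 and~3 of Lemma~\ref{L(k,i)^Trj iso}. No charge conjugation $i\leftrightarrow k-i$ enters at this stage. Your plan to apply $\Delta(h^{(2)},z)$ to untwist $L(k,i_2)^{T_1}$ back to $L(k,k-i_2)$ is valid, but it forces you through the $i\mapsto k-i$ relabeling twice (once on the factor, once when retwisting the output), which is precisely the bookkeeping obstacle you anticipate. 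The paper's single $\Delta(h^{(1)},z)$ move sidesteps this entirely for (\ref{fusion T1 T1}); the charge conjugation only becomes unavoidable in (\ref{fusion T1 T2}) and (\ref{fusion T2 T2}).
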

\begin{proof}
Proof of (\ref{fusion T0 T1}): From Lemma \ref{intertwin Y(si^r)}, we know that $\mathscr{Y}_{\si}(\cdot, z)$ is an intertwining operator of type 
        $\left(\begin{array}{c}
                 \ L(k,i_3)^{T_1}\ \\
                L(k,i_1)   \  L(k,i_2)^{T_1}
              \end{array}\right)$ where $0 \leqslant i_1, i_2, i_3 \leqslant k$, $|i_1-i_2| \leqslant i_3 \leqslant i_1 + i_2$, $i_1 + i_2 + i_3 \in 2\Z$ and $i_1 + i_2 + i_3 \leqslant 2k$.
    Thus we have 
        \[  \mathscr{Y}_{\si}(v^{i_1,i_1}, z)v^{i_2,i_2} = z^{-\frac{i_1}{6}}\mathscr{Y}(v^{i_1,i_1}, z)v^{i_2,i_2}.   \]
    Recall that $a_{k,i}^{(r)} = \frac{i(i+2)}{4(k+2)} + \frac{r^2k-6ir}{36}$ is the conformal weight of the irreducible $\sigma^r$-twisted $L(k, 0)$-module $L(k,i)^{T_r}$ for $r=1, 2$.
    Then we can deduce that $\mathscr{Y}_{\si}(\cdot, z)$ is an intertwining operator of type 
        $\left(\begin{array}{c}
                 \ L(k,i_3)^{T_1,j_3}\ \\
                L(k,i_1)^0   \  L(k,i_2)^{T_1,0}
              \end{array}\right)$ 
    if and only if 
        \[  a_{k,i_1}^{(0)} + a_{k,i_2}^{(0)} - a_{k,i_3}^{(0)} - a_{k,i_1}^{(0)} - a_{k,i_2}^{(1)} + a_{k,i_3}^{(1)} + \frac{i_1}{6} + \frac{j_3}{3} \in \Z \]   
    which is equivalent to $\frac{i_1+i_2-i_3}{6} + \frac{j_3}{3} \in \Z$.
    Hence $\overline{j_3} = 0$, if $\frac{1}{2}(i_1 + i_2 - i_3) \in 3\mathbb{Z}$,     
          $\overline{j_3} = 2$, if $\frac{1}{2}(i_1 + i_2 - i_3) \in 3\mathbb{Z}+1$,     
          and $\overline{j_3} = 1$, if $\frac{1}{2}(i_1 + i_2 - i_3) \in 3\mathbb{Z}+2$.
    
    In general, for any $0 \leqslant i_1, i_2, i_3 \leqslant k$, $|i_1-i_2| \leqslant i_3 \leqslant i_1 + i_2$, $i_1 + i_2 + i_3 \in 2\Z$ and $i_1 + i_2 + i_3 \leqslant 2k$, $j_1, j_2, j_3 \in \{0, 1, 2\}$, $\mathscr{Y}_{\si}(\cdot, z)$ is an intertwining operator of type 
        $\left(\begin{array}{c}
                 \ L(k,i_3)^{T_1,j_3}\ \\
                L(k,i_1)^{j_1}   \  L(k,i_2)^{T_1,j_2}
              \end{array}\right)$ 
    if and only if 
        \[  a_{k,i_1}^{(0)} + a_{k,i_2}^{(0)} - a_{k,i_3}^{(0)} - a_{k,i_1}^{(0)} - a_{k,i_2}^{(1)} + a_{k,i_3}^{(1)} + \frac{i_1}{6} - \frac{j_1}{3} - \frac{j_2}{3} + \frac{j_3}{3} \in \Z \]     
    which is equivalent to $\frac{i_1+i_2-i_3}{6} - \frac{j_1+j_2-j_3}{3} \in \Z$.
    Hence $j_3 = \overline{sign(i_1, i_2, i_3, j_1, j_2)}$. Recall the quantum dimensions of irreducible $L(k,0)^{\Z_3}$-modules along with the fact that 
         \[ \frac{\sin\frac{\pi(i_1+1)}{k+2}}{\sin\frac{\pi}{k+2}} \cdot \frac{\sin\frac{\pi(i_2+1)}{k+2}}{\sin\frac{\pi}{k+2}} = \sum_{\substack{|i_1-i_2| \leqslant i_3 \leqslant i_1 + i_2 \\ i_1 + i_2 + i_3 \in 2\mathbb{Z} \\ i_1 + i_2 + i_3 \leqslant 2k}} \frac{\sin\frac{\pi(i_3+1)}{k+2}}{\sin\frac{\pi}{k+2}}.  \]
    Then we can deduce that (\ref{fusion T0 T1}) holds.

Proof of (\ref{fusion T0 T0}): Note that $v^{i,i-j} \in L(k,i)^j$ if and only if $v^{i,i-j} \in L(k,i)^{T_1,j}$, then by (\ref{fusion T0 T1}), we botain (\ref{fusion T0 T0}).

Proof of (\ref{fusion T0 T2}): Recall from Lemma \ref{L(k,i)^Trj iso}, we know that 
    \[ (L(k,i)^{T_1, j}, Y_{\si}(\Delta(h^{(1)}, z)\cdot, z)) \cong (L(k, i)^{T_2,\overline{-j}}, Y_{\si^2}(\cdot, z)). \]
    Then, as a result of \cite{DLM96-1} Proposition 2.8, we can get (\ref{fusion T0 T2}). Actually, one can also use the symmetric property in Proposition \ref{fusionsymm.}, Theorem \ref{contragredient mod} and (\ref{fusion T0 T1}) to determine the fusion reules $N_{L(k,i_1)^{j_1},L(k,i_2)^{T_2,j_2}}^{L(k,i_3)^{T_2,j_3}}$.

Proof of (\ref{fusion T1 T1}):
    Since \[ L(k,i_1)^{j_1} \boxtimes L(k,i_2)^{T_1,j_2} \cong L(k,i_2)^{T_1,j_2} \boxtimes L(k,i_1)^{j_1}, \]
    we can prove (\ref{fusion T1 T1}) by using (\ref{fusion T0 T1}), the Proposition 2.8 in \cite{DLM96-1} and Lemma \ref{L(k,i)^Trj iso}.

Proof of (\ref{fusion T1 T2}):
    Using (\ref{fusion T1 T1}), the Proposition 2.8 in \cite{DLM96-1} along with Lemma \ref{L(k,i)^Trj iso}, we can deduce that 
        \[   L(k,i_1)^{T_1,j_1} \boxtimes L(k,i_2)^{T_2,\overline{-j_2}} = \sum_{\substack{|i_1-i_2| \leqslant i_3 \leqslant i_1 + i_2 \\ i_1 + i_2 + i_3 \in 2\Z \\ i_1 + i_2 + i_3 \leqslant 2k}} L(k,k-i_3)^{\overline{sign(i_1, i_2, i_3, j_1, j_2)+k-i_3}}.  \] 
    Then (\ref{fusion T1 T2}) is clear.

In almost exactly the same way, we can prove (\ref{fusion T2 T2}).
\end{proof}

\begin{rmk}
    For the case of $k=1$, recall from Remark \ref{k=1 V^G iso to V_L} that $L(1,0)^{\Z_3}$ can be realized as the lattice vertex operator algebra $V_{\Z\be}$ with $(\be, \be) = 18$ and the correspondence between irreducible $L(1,0)^{\Z_3}$-modules and \{$V_{\Z\be+\frac{s}{18}\be}| 0 \leqslant s < 18$\} has been listed explicitly. It is well known that
        \[ V_{\Z\be+\frac{s}{18}\be} \boxtimes_{V_{\Z\be}} V_{\Z\be+\frac{t}{18}\be} = V_{\Z\be+\frac{s+t}{18}\be}, \]
    where we use $s, t$ to denote both integers between $0$ and $17$ and its residue class modulo $18$ in this situation. This formula also gives the fusion rules for all the irreducible $L(1,0)^{\Z_3}$-modules. It is not difficult to verify that the fusion rules given in this manner are consistent with the results in Theorem \ref{fusion rules v^Z3}.
\end{rmk}

\section*{Acknowledgement}
I would like to express my deep appreciation and gratitude to my supervisor, Prof. C. Jiang, for her immense knowledge, useful discussions, and valuable suggestions throughout this work.

\end{document}